\newtheorem*{thma}{Theorem A}
\newtheorem*{corb}{Proposition B}
\newtheorem*{thmc}{Theorem C}
\newtheorem*{corc2}{Theorem D} 
\newtheorem*{thmd}{Theorem E}
\newtheorem*{thmf}{Theorem F}
\newtheorem*{thmg}{Theorem G}
\newtheorem*{core1}{Proposition H}
\newtheorem*{core2}{Proposition J}
\newtheorem{thm}{Theorem}[section]
\newtheorem{prop}[thm]{Proposition}
\newtheorem{cor}[thm]{Corollary}
\newtheorem{lem}[thm]{Lemma}
\begin{document}
\title{Residuation of Linear Series\\
and The Effective Cone of $C_{d}$}
\author{Yusuf Mustopa}
\begin{abstract}
We obtain new information about divisors on the $d-$th symmetric power $C_{d}$ of a general curve $C$ of genus $g \geq 4.$  This includes a complete description of the effective cone of $C_{g-1}$ and a partial computation of the volume function on one of its non-nef subcones, as well as new bounds for the effective and movable cones of $C_{d}$ in the range $\frac{g+1}{2} \leq d \leq g-2.$  We also obtain, for each $g \geq 5,$ a divisor on $C_{g-1}$ with non-equidimensional stable base locus.   

For a general hyperelliptic curve $C$ of genus $g,$ we obtain a complete description of the effective cone of $C_{d}$ for $2 \leq d \leq g$ and an integral divisor on $C_{g-1}$ which has non-integral volume whenever $g$ is not a power of 2.
\end{abstract}
\maketitle
\section{Introduction}
Let $C$ be a smooth complex projective algebraic curve, and let $d \geq 1$ be an integer.  The \textit{$d$-th symmetric power} $C_{d}:=C^{d}/{\mathcal{S}_{d}}$ is a smooth $d-$dimensional complex projective variety which is a fine moduli space parametrizing effective divisors of degree $d$ on $C.$  In this paper we study the cone of effective divisors on $C_{d}$ and its refinements.          

There are two natural divisor classes on $C_{d}.$  For each $p \in C$ the image of the embedding $$i_{p}:C_{d-1} \hookrightarrow C_{d}, \hspace{0.1cm} D' \mapsto D'+p$$ is an ample divisor on $C_{d}$ whose numerical class (which is independent of $p$) will be denoted by $x.$  The pullback of a theta-divisor on $\textrm{Pic}^{d}(C)$ via the natural map $$a_{d}:C_{d} \rightarrow \textrm{Pic}^{d}(C), \hspace{0.1cm} D \mapsto \mathcal{O}_{C}(D)$$ is a nef divisor on $C_{d}$ whose numerical class will be denoted by $\theta.$  (When $2 \leq d \leq g,$ we also have that $\theta$ is big.)  The classes $x$ and $\theta$ are linearly independent in the real N\'{e}ron-Severi space $N^{1}_{\mathbb{R}}(C_{d}),$ and when $C$ is general $N^{1}_{\mathbb{R}}(C_{d})$ is generated by $x$ and $\theta.$ 

We prove the following:
\begin{thma}
\label{main1}
Let $C$ be a general nonhyperelliptic curve of genus $g \geq 4.$  For each $d \leq g-1,$ define $$r_{g,d}=1+\displaystyle\frac{g-d}{g^{2}-dg+(d-2)}.$$  
\item[(i)]The class $\theta-r_{g,d}x$ on $C_{d}$ is $\mathbb{Q}-$effective for $d \leq g-1.$
\item[(ii)]The class $\theta-r_{g,g-1}x$ on $C_{g-1}$ spans a boundary ray of the effective cone of $C_{g-1}.$
\end{thma}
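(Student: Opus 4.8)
The plan is to deduce (ii) from (i) together with a suitable covering family of curves, and to prove (i) by exhibiting, for each $d\le g-1$, a concrete effective $\mathbb{Q}$-divisor of class $\theta-r_{g,d}x$ produced by residuating a Brill--Noether linear series.

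For (i): since $C$ is general, the Brill--Noether existence and dimension theorems provide, for suitable integers $(r,n)$ with $\rho(g,r,n)\ge 0$, a line bundle $L$ with $\deg L=n$, $h^{0}(L)=r+1$, and --- this is where residuation enters --- with residual bundle $K_{C}\otimes L^{-1}$ of the rank and degree one needs below. Let $\mathcal{D}_{d}\subset C\times C_{d}$ be the universal divisor, with projections $p$ to $C$ and $q$ to $C_{d}$, and put $\mathcal{E}^{L}_{d}:=q_{*}\bigl(p^{*}L\otimes\mathcal{O}_{\mathcal{D}_{d}}\bigr)$, a rank-$d$ bundle on $C_{d}$. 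I would study the degeneracy locus $\Gamma_{L}\subset C_{d}$ of the evaluation map $H^{0}(L)\otimes\mathcal{O}_{C_{d}}\to\mathcal{E}^{L}_{d}$ --- concretely, the residual/secant locus $\{D\in C_{d}:h^{0}(L(-D))>\max(h^{0}(L)-d,0)\}$ --- which, wherever it has the expected codimension one, is a genuine effective divisor. Its class in $N^{1}_{\mathbb{R}}(C_{d})=\mathbb{R}x\oplus\mathbb{R}\theta$ is computed by Thom--Porteous, using the standard generators and relations of $N^{1}_{\mathbb{R}}(C\times C_{d})$ (the pullbacks of a point on $C$, of $x$ and $\theta$ from $C_{d}$, and the Poincar\'{e}-type correspondence class) and the intersection numbers $x^{d-i}\theta^{i}=g!/(g-i)!$ for $0\le i\le\min(d,g)$ on $C_{d}$. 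The residuation step --- re-running the construction with $K_{C}\otimes L^{-1}$ in place of $L$ --- is what selects the economical $(r,n)$ and, more importantly, detects a fixed component of $\Gamma_{L}$ (a multiple of $x$); stripping it off leaves an effective divisor which a direct calculation identifies as a positive rational multiple of $\theta-r_{g,d}x$.

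For (ii): effectivity of $\theta-r_{g,g-1}x$ is the case $d=g-1$ of (i), so it remains to see that this class spans an extremal ray. Because $N^{1}_{\mathbb{R}}(C_{g-1})$ is two-dimensional, it suffices to produce a curve class $\gamma$ in the closure of the cone of covering (moving) curves with $(\theta-r_{g,g-1}x)\cdot\gamma=0$; then the effective cone of $C_{g-1}$ lies in the half-plane $\{D:D\cdot\gamma\ge 0\}$, and since $\theta-r_{g,g-1}x$ is effective and lies on the wall $\gamma^{\perp}$ it must span a boundary ray. Clearing denominators, the required curve satisfies $\theta\cdot\gamma:x\cdot\gamma=(2g-2):(2g-3)$. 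I would again obtain $\gamma$ from a residuation: a well-chosen pencil on $C$ (or its canonical residual) yields, after adjoining and then letting vary a fixed divisor, a family of curves sweeping out $C_{g-1}$; one reads off $x\cdot\gamma$ from the degree of the attached map to $\mathbb{P}^{1}$ and computes $\theta\cdot\gamma$ by adjunction on the attached $\mathbb{P}^{1}$-bundle (equivalently, by a Poincar\'{e}-type formula), arranging the numerics so that the ratio is exactly $(2g-2)/(2g-3)$, and finally checks that the family is covering.

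The main obstacle lies on the dual side of (ii). The obvious covering curves are too crude: Abel--Jacobi curves $\{E_{0}+p:p\in C\}$ have slope $\theta\cdot\gamma/x\cdot\gamma=g$, and complete-intersection curves $x^{a}\theta^{b}$ (with $a+b=g-2$) have integer slopes $2,\dots,g$, all strictly exceeding $r_{g,g-1}=1+\tfrac1{2g-3}$; moreover restricting $\theta-rx$ to the divisor $i_{q}(C_{g-2})$ bounds the slope in the wrong direction unless one first knows $i_{q}(C_{g-2})$ lies in the stable base locus. So the sharp covering curve --- equivalently, the proof that $\theta-rx$ fails to be pseudoeffective (indeed has volume zero) once $r>r_{g,g-1}$ --- has to be manufactured by the residuation combinatorics itself, and this is the crux. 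By comparison, the only real subtlety in (i) is correctly pinpointing the fixed part of $\Gamma_{L}$, so that the class one extracts is the sharp $\theta-r_{g,d}x$ and not merely the evident $\theta-x$.
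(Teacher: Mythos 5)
Your proposal has genuine gaps in both parts. For (i), the central construction cannot produce the asserted class. A degeneracy locus of the evaluation map $H^{0}(L)\otimes\mathcal{O}_{C_{d}}\to E_{L}$ is a divisor only when $\dim|L|=d-1$, and then by the subordinate-locus formula (3.2 on p.~342 of ACGH) its class is exactly $\theta-(g+d-1-\deg L)x$: the $\theta$-coefficient is $1$ and the slope is an integer. Stripping off a fixed component only subtracts a non-negative \emph{integer} multiple of $x$, so the slope stays integral; but $r_{g,d}=1+\frac{g-d}{g^{2}-dg+(d-2)}$ is not an integer. Worse, Brill--Noether nonnegativity $\rho=g-d(g-\deg L+d-1)\ge 0$ caps the achievable integer slope at $g/d$, which is $<2$ for $d>g/2$ --- precisely the range where the theorem has content --- so no single residuated series gets past $\theta-x$. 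The divisor actually needed is not a single subordinate locus but the one swept out by the one-parameter family $\bigcup_{p\in C}\Gamma_{d}\bigl(K_{C}(-(g-d+1)p)\bigr)$ (the ramification divisor of the residual series; for $d=g-1$, of the Gauss map), and its class $(g-d+1)\bigl((g^{2}-dg+d-2)\theta-(g^{2}-(d-1)g-2)x\bigr)$ is extracted not by Thom--Porteous but by intersecting with two test curves, $\chi_{d}=\{p+q_{1}+\cdots+q_{d-1}\}$ of class $x^{d-1}$ and the small diagonal $\Delta_{d}$, which requires the diagonal class computations (classes of $\Delta_{d}$ and of $\Delta_{g-d+1,d}$ in $C_{g+1}$).

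For (ii), your dual strategy --- exhibit a moving curve class $\gamma$ with $\theta\cdot\gamma:x\cdot\gamma=(2g-2):(2g-3)$ --- is legitimate in principle, but you concede you cannot manufacture $\gamma$, so the argument is not complete. The paper avoids constructing any curve: by Gieseker's theorem, for general $C$ the residuation map $\widetilde{\tau}:G^{g-d-1}_{2g-2-d}(C)\dashrightarrow C_{d}$ is an isomorphism in codimension one between smooth projective varieties, so by Hartogs it identifies all spaces of global sections and hence the effective cones. For $d=g-1$ this is a birational involution of $C_{g-1}$ acting on $N^{1}_{\mathbb{R}}$ by $\theta\mapsto\theta$, $\theta-x\mapsto x$; it therefore carries the Kouvidakis boundary ray spanned by the half-diagonal $-\theta+(2g-2)x$ to the ray spanned by $(2g-3)\theta-(2g-2)x$, i.e.\ by $\theta-\bigl(1+\frac{1}{2g-3}\bigr)x=\theta-r_{g,g-1}x$. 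That transfer of a known extremal ray is the entire proof of (ii), and it is the structural use of residuation that your proposal gestures at but never actually deploys.
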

This result gives a new bound for the effective cone of $C_{d}$ in the range $\frac{g}{2}+1 \leq d \leq g-1.$  We will say more about the case where $d<\frac{g}{2}+1$ later in this introduction.  

Theorem 3 in \cite{Kou} says that the diagonal class $$\Delta=2(-\theta+(g+d-1)x)$$ (which parametrizes effective divisors of degree $d$ having multiplicity) spans a boundary ray of the effective cone of $C_{d}$ for all $d \geq 2.$  Combining (ii) of Theorem A with that result yields 
\begin{corb}
\label{effcone1}
If $C$ is a general nonhyperelliptic curve of genus $g \geq 4,$ the effective cone of $C_{g-1}$ is spanned by the half-diagonal class $-\theta+(2g-2)x$ and the class $\theta-(1+\frac{1}{2g-3})x.$
\end{corb}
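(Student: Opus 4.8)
The plan is to deduce Proposition B formally from part (ii) of Theorem A and from \cite[Theorem 3]{Kou}, using only that $N^{1}_{\mathbb R}(C_{g-1})$ is two-dimensional when $C$ is general. First I would make the two arithmetic specializations to $d=g-1$ explicit. From the definition of $r_{g,d}$,
\[
r_{g,g-1}=1+\frac{g-(g-1)}{g^{2}-(g-1)g+((g-1)-2)}=1+\frac{1}{2g-3},
\]
so the class spanning a boundary ray in Theorem A(ii) is exactly $\theta-(1+\frac{1}{2g-3})x$. Similarly, the diagonal class $\Delta=2(-\theta+(g+d-1)x)$ of \cite{Kou} specializes at $d=g-1$ to $\Delta=2(-\theta+(2g-2)x)$, so \cite[Theorem 3]{Kou} says precisely that the half-diagonal class $-\theta+(2g-2)x$ spans a boundary ray of the effective cone of $C_{g-1}$.

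Next I would run the following elementary argument in the plane $N^{1}_{\mathbb R}(C_{g-1})=\mathbb R x\oplus\mathbb R\theta$. The effective cone $\overline{\mathrm{Eff}}(C_{g-1})$ is a full-dimensional closed convex cone in this plane (it contains the linearly independent classes $x$ and $\theta$), and any such cone is the whole plane, a half-plane, or a pointed angular sector of angle less than $\pi$; in these three cases the topological boundary is, respectively, empty, a single line, or a pair of non-collinear rays. The classes $\theta-(1+\frac{1}{2g-3})x$ and $-\theta+(2g-2)x$ are not proportional --- one has positive and the other negative $\theta$-coefficient --- yet both span boundary rays of $\overline{\mathrm{Eff}}(C_{g-1})$ by the two results just quoted. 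This rules out the first two cases (no boundary ray at all, respectively all boundary rays collinear), so $\overline{\mathrm{Eff}}(C_{g-1})$ is a pointed sector; its two boundary rays can then only be the two we have exhibited, and therefore it is the cone spanned by $-\theta+(2g-2)x$ and $\theta-(1+\frac{1}{2g-3})x$. As a sanity check, the sector between these two classes has angle less than $\pi$ and contains both nef generators $x$ and $\theta$, as it must.

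Since Theorem A(ii) and \cite[Theorem 3]{Kou} are available, Proposition B carries essentially no independent difficulty: the entire substance is in those two extremality statements, and the step above is just the observation that a full-dimensional cone in the plane with two non-collinear boundary rays is pinned down by them. If I had to name the one place where care is needed, it is exactly this last point --- in particular ruling out the possibility that $\overline{\mathrm{Eff}}(C_{g-1})$ is a half-plane --- which is handled cleanly by the non-proportionality of the two given classes.
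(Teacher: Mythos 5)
Your proof is correct and is essentially the paper's own argument: Proposition B is presented there as an immediate consequence of Theorem A(ii) and Kouvidakis's Theorem 3, together with the fact that $N^{1}_{\mathbb{R}}(C_{g-1})$ is two-dimensional for a general curve (Corollary \ref{n_s_rank}, via Pirola), and your convex-geometry step merely makes the implicit "two non-collinear boundary rays of a full-dimensional cone in the plane determine it" explicit. The specializations $r_{g,g-1}=1+\frac{1}{2g-3}$ and $\Delta=2(-\theta+(2g-2)x)$ at $d=g-1$ are exactly as in the paper.
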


We obtain more refined information in two distinct (but related) directions.  The \textit{stable base locus} $\textbf{B}(D)$ of a $\mathbb{Q}-$Cartier divisor $D$ on a projective variety $X$ is the set-theoretic intersection of the base loci of the linear systems $|mD|$ (where $m$ varies over all positive integers for which $mD$ is Cartier).  The codimension of $\textbf{B}(D)$ is a rough measure of the size of $D;$ for instance, $\textbf{B}(D)=\emptyset$ if $D$ is ample and $\textbf{B}(D)=X$ if $D$ is not pseudoeffective.

The stable base locus of a divisor is not a numerical invariant in general.  However, the so-called \textit{stable} divisors (see Section \ref{baseloc} for the definition) have stable base loci which are numerical invariants, and as such we can speak of stable classes in $N^{1}_{\mathbb{Q}}(X)$.  The \textit{movable cone} of a smooth projective variety $X$ is the closure of the convex cone in $N^{1}_{\mathbb{R}}(X)$ spanned by classes of divisors on $X$ whose stable base locus has no divisorial component.     
 
\begin{thmc}
\label{main2}
Let $C$ be a general nonhyperelliptic curve of genus $g \geq 4,$ and let $d \leq g-1.$ Then the class $\theta-x$ on $C_{d}$ is stable with stable base locus $$C^{1}_{d}=\{D \in C_{d}:\dim{|D|} \geq 1\}.$$  In particular, since $C^{1}_{d}$ is of codimension at least 2, the class $\theta-x$ lies in the interior of the movable cone of $C_{d}.$ 
\end{thmc}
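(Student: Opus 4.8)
The plan is to realize the class $\theta - x$ concretely as the class of an effective divisor coming from a residuation construction, and then to identify its stable base locus with $C^1_d$ by a direct analysis of the linear systems $|m(\theta-x)|$. First I would recall the standard intersection-theoretic identities on $C_d$ (Poincaré formula, the relations among $x$, $\theta$, and the diagonal classes) and the description of $N^1(C_d)$ for general $C$; this fixes the numerics. The key geometric input is the following: for a line bundle $L$ on $C$ of degree $d$ with $h^0(L) = r+1$, residuation $D \mapsto K_C - D$ (equivalently $D \mapsto \text{(divisor in }|L|\text{ through a fixed effective divisor)}$) produces divisors on $C_d$, and one computes that the natural divisor parametrizing $\{D : D \leq E \text{ for some } E \in |L|\}$ — or its appropriate translate — has class a positive multiple of $\theta - x$ when $\deg L$ and $h^0(L)$ are chosen correctly. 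I would make this explicit: pick $L$ with $h^0(L)=1$ generically but allow the construction to degenerate exactly over $C^1_d$, so that the base locus of the resulting system is forced to contain $C^1_d$.

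Next, to show $\mathbf{B}(\theta-x) \subseteq C^1_d$, I would argue that away from $C^1_d$ the divisor moves. The point is that if $\dim|D| = 0$, then $D$ is the unique effective divisor in its linear equivalence class, and one can use the map $a_d : C_d \to \mathrm{Pic}^d(C)$ together with the fact that $\theta$ is the pullback of an ample class on $\mathrm{Pic}^d(C)$: on the open set $C_d \setminus C^1_d$ the map $a_d$ is an isomorphism onto its image, so sections of multiples of $\theta$ separate points there, and one subtracts a suitable multiple of $x$ (i.e. imposes passing through a general point of $C$) without creating base points outside $C^1_d$. Concretely I expect to show that for $D \notin C^1_d$ there is an effective divisor in $|m(\theta-x)|$ not containing $D$, by exhibiting a line bundle $M$ on $C$ of the right degree with a section vanishing on $a_d(D)$-translates but not on $D$; the genericity of $C$ (no unexpected linear series, Brill–Noether generality) is what guarantees enough such $M$ exist. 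For the reverse inclusion $C^1_d \subseteq \mathbf{B}(\theta-x)$, I would show every effective divisor in $|m(\theta-x)|$ contains $C^1_d$: a curve in $C_d$ moving in a pencil forces $a_d$ to contract it, so $\theta$ restricts to zero while $x$ restricts positively on such loci, making $(\theta - x)|_{C^1_d}$ non-effective unless $C^1_d$ is in the base locus — this uses that $C^1_d$ is covered by the fibers of the $g^1_d$'s, which are positive-dimensional and contracted by $a_d$.

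Finally, stability: I would invoke the definition from Section~\ref{baseloc} and check that $\theta - x$ sits in a region of $N^1_{\mathbb{Q}}(C_d)$ where the stable base locus is locally constant, e.g. by showing $\theta - x$ is not on the boundary of any relevant Zariski chamber — here Theorem~A(i) is useful since it places the effective boundary strictly below $\theta - x$ (as $r_{g,d} < 1$... wait, $r_{g,d} > 1$, so in fact $\theta - x$ is interior to the effective cone, which is what we want), so $\theta - x$ is big and lies strictly inside the effective cone, hence its stable base locus behaves well under small perturbation. Since $C^1_d = C_d - W^1_d$ has codimension $g - d + ((r+1)-? )$... in any case codimension at least $2$ in the range $d \leq g-1$ by Brill–Noether theory for general $C$, the class $\theta - x$ has stable base locus of codimension $\geq 2$ and therefore lies in the interior of the movable cone.

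The main obstacle I anticipate is the reverse inclusion $C^1_d \subseteq \mathbf{B}(\theta-x)$ done \emph{uniformly in $m$}: one must rule out that some high multiple $m(\theta-x)$ acquires sections that do not vanish on $C^1_d$. Handling this cleanly will likely require the residuation description to be exact — i.e. identifying $H^0(C_d, m(\theta-x))$ with a space of sections on $C$ (or on $C_m$) via a transfer/pushforward isomorphism — rather than just an inequality on dimensions, so that the vanishing on $C^1_d$ is built into the identification. Getting that isomorphism, and checking it is compatible with multiplication of sections, is the technical heart of the argument.
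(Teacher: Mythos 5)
Your reverse inclusion is sound: $C^{1}_{d}$ is swept out by the positive-dimensional fibres of $a_{d}$, on any line $\ell$ of which $\theta\cdot\ell=0$ and $x\cdot\ell>0$, so $(\theta-x)\cdot\ell<0$ and $C^{1}_{d}\subseteq\textbf{B}_{-}(\theta-x)$. The genuine gaps are in the opposite inclusion and in stability, and although you correctly locate the difficulty (controlling \emph{all} multiples $m(\theta-x)$ at once via a transfer isomorphism), you point that isomorphism at the wrong target. There is no useful identification of $H^{0}(C_{d},m(\theta-x))$ with sections on $C$ or on $C_{m}$; the identification that works is with $H^{0}$ of $m\widehat{X}$ on the moduli space of linear series $G^{g-d-1}_{2g-2-d}(C)$, via the residuation map $\widetilde{\tau}$. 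Gieseker's theorem (Theorem \ref{gieseker}) makes $\widetilde{\tau}$ an isomorphism in codimension one between smooth varieties, so Hartogs transfers sections of every multiple (Proposition \ref{picard_isom}); and the decisive input missing from your sketch is Proposition \ref{plucker_ample}: $\widetilde{\tau}^{\ast}(\theta-x)=\widehat{X}$ is \emph{ample} on $G^{g-d-1}_{2g-2-d}(C)$, being the restriction of a Pl\"{u}cker class on a Grassmann bundle associated to an ample vector bundle. Ampleness is what makes $|m\widehat{X}|$ free for $m\gg0$, whence the base locus on the $C_{d}$ side is exactly the indeterminacy locus $C^{1}_{d}$. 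Neither of your proposed mechanisms delivers this: ``separating points with $\theta$ and subtracting a multiple of $x$'' does not produce sections of $m(\theta-x)$, and the divisors $\Gamma_{d}(K_{C}(-D))$ for varying $D\in C_{g-d}$ are only algebraically, not linearly, equivalent, so their common intersection does not compute the base locus of any single linear system $|m(\theta-x)|$.

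The stability argument is also not valid as stated: being big and interior to the effective cone does not imply stability, since the walls separating stability chambers lie in the interior of the big cone. The paper instead applies the openness results of \cite{ELMNP} (Propositions \ref{stab_dense} and \ref{stab_drop}) on the $G^{g-d-1}_{2g-2-d}(C)$ side, where the class $\widehat{X}$ is ample and hence trivially stable with empty stable base locus, and transfers the conclusion through Theorem \ref{stab_classes}: for all $t$ in a neighbourhood of $1$ the class $\theta-tx$ has stable base locus $C^{1}_{d}$, which forces $\textbf{B}_{-}(\theta-x)=\textbf{B}_{+}(\theta-x)=C^{1}_{d}$. Finally, the codimension bound on $C^{1}_{d}$ is $g-d+1\geq2$, again from Gieseker, rather than from the Brill--Noether count you began and abandoned.
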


This theorem follows from the more general Theorem \ref{stab_classes}, which when combined with Lemma \ref{nef_small_diag} yields

\begin{corc2}
If $C$ is a general nonhyperelliptic curve of genus $g \geq 5,$ there exists $s_{g} > 1+\frac{1}{g^{2}-g-1}$ such that the the class $\theta-sx$ on $C_{g-1}$ is stable with a non-equidimensional stable base locus whenever $1+\frac{1}{g^{2}-g-1} < s < s_{g}$.  
\end{corc2}   

We now turn to a different way of measuring the size of a divisor.  The \textit{volume} of a $\mathbb{Q}-$Cartier divisor $D$ on an $n-$dimensional projective variety $X$ is $$\textnormal{vol}_{X}(D)=\limsup_{m}\frac{n! \cdot h^{0}(\mathcal{O}_{X}(mD))}{m^{n}}$$ where the limit superior is taken over all positive integers $m$ for which $mD$ is Cartier.  When $D$ is big, this number may be thought of as the "moving self-intersection" of $D$ (see, for instance, Theorem 11.4.11 in \cite{Laz2}).  Note that $\textnormal{vol}_{X}(D) > 0$ precisely when $D$ is big, and that when $D$ is ample, $\textnormal{vol}_{X}(D)=D^{n}$ by Serre vanishing and asymptotic Riemann-Roch.  

It can be shown that $\textnormal{vol}_{X}(D)$ is independent of the numerical class of $D,$ and that the real-valued function $\textnormal{vol}_{X}$ on $N^{1}_{\mathbb{Q}}(X)$ extends to a continuous real-valued function on $N^{1}_{\mathbb{R}}(X).$  (More recently, $\textnormal{vol}_{X}$ has been shown to be $\mathcal{C}^{1}$ in \cite{BFJ}).  
\begin{thmd}
\label{main3}
Let $C$ be a general nonhyperelliptic curve of genus $g \geq 4.$  Then for $t \in [0,1+\frac{1}{g^{2}-g-1}],$ $$\textnormal{vol}_{C_{g-1}}(\theta-tx)=\displaystyle\sum_{k=0}^{g-1}\binom{g-1}{k}\frac{g!}{(k+1)!}t^{k}(1-t)^{g-1-k}.$$
\end{thmd}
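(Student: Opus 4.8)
The plan is to transport $\theta-tx$ to a \emph{nef} class by the residuation involution and then read off its volume as a top self-intersection number.

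\smallskip\noindent\textbf{Step 1 (the residuation map).} Let $\psi\colon C_{g-1}\dashrightarrow C_{g-1}$ send an effective divisor $D$ of degree $g-1$ to the unique effective divisor $\operatorname{res}(D)$ linearly equivalent to $K_{C}-D$. Since $\deg(K_{C}-D)=g-1$ and $h^{0}(K_{C}-D)=h^{0}(D)$ by Riemann--Roch, $\psi$ is a morphism precisely on $C_{g-1}\setminus C^{1}_{g-1}$, and there $\psi\circ\psi=\operatorname{id}$. By Theorem~C the locus $C^{1}_{g-1}$ has codimension at least $2$, so $\psi$ is an isomorphism in codimension one. Consequently, for any class $\delta\in N^{1}_{\mathbb{Q}}(C_{g-1})$ and any $m$ making $m\delta$ Cartier, restriction to $C_{g-1}\setminus C^{1}_{g-1}$ identifies $H^{0}(C_{g-1},m\delta)$ with $H^{0}(C_{g-1},m\,\psi^{*}\delta)$; hence $\textnormal{vol}_{C_{g-1}}(\psi^{*}\delta)=\textnormal{vol}_{C_{g-1}}(\delta)$.

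\smallskip\noindent\textbf{Step 2 (action on $N^{1}$).} On $C_{g-1}\setminus C^{1}_{g-1}$ one has $a_{g-1}\circ\psi=\rho\circ a_{g-1}$, where $\rho\colon \operatorname{Pic}^{g-1}(C)\to\operatorname{Pic}^{g-1}(C)$ is $L\mapsto K_{C}\otimes L^{-1}$; since $\rho^{*}[\Theta]=[\Theta]$ (symmetry of the theta divisor, or simply $(-1)^{*}=\operatorname{id}$ on $H^{2}$ of an abelian variety), this gives $\psi^{*}\theta=\theta$. Writing $\psi^{*}x=\alpha\theta+\beta x$ in the basis $\{\theta,x\}$ of $N^{1}_{\mathbb{Q}}(C_{g-1})$, the relation $(\psi^{*})^{2}=\operatorname{id}$ forces $\beta=-1$ (the alternative $\beta=1$ would make $\psi^{*}$ the identity on $N^1$, whereas $\psi^{*}x\neq x$ is seen below), and intersecting with the Abel--Jacobi test curve $\gamma\colon p\mapsto p+B_{0}$ (for $B_{0}\in C_{g-2}$ general, $\theta\cdot\gamma=g$ and $x\cdot\gamma=1$) together with the count $\psi^{*}x\cdot\gamma=\#\{p\in C: h^{0}(p+B_{0}+q_{0})\ge 2\}=g-1$ on the canonical model pins down $\alpha=1$. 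Therefore $\psi^{*}x=\theta-x$, and for every $t$,
\[ \psi^{*}(\theta-tx)=(1-t)\theta+tx. \]

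\smallskip\noindent\textbf{Step 3 (nef range and conclusion).} The class $(1-t)\theta+tx$ equals $\theta$ at $t=0$ and is ample for $t\in(0,1]$; as $t$ grows it remains nef until it meets the boundary ray of the nef cone of $C_{g-1}$ opposite the $a_{g-1}$-contracting ray $\mathbb{R}_{\ge0}\theta$. By Lemma~\ref{nef_small_diag} that ray is $\mathbb{R}_{\ge0}\bigl(g(g-1)x-\theta\bigr)$ --- the small diagonal curve $\{(g-1)p:p\in C\}$, of $\theta$-degree $g(g-1)^{2}$ and $x$-degree $g-1$, being extremal --- and an elementary computation shows $(1-t)\theta+tx$ is nef exactly for $t\in[0,\,1+\tfrac{1}{g^{2}-g-1}]$. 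For such $t$ (rational first, then all $t$ by continuity of the volume) we get $\textnormal{vol}_{C_{g-1}}(\theta-tx)=\textnormal{vol}_{C_{g-1}}\bigl((1-t)\theta+tx\bigr)=\bigl((1-t)\theta+tx\bigr)^{g-1}$, since the volume of a nef divisor is its top self-intersection. Expanding this binomially and substituting Poincar\'e's formula $\theta^{a}x^{g-1-a}=\tfrac{g!}{(g-a)!}$ for $0\le a\le g-1$, then reindexing $k=g-1-a$, produces exactly $\displaystyle\sum_{k=0}^{g-1}\binom{g-1}{k}\tfrac{g!}{(k+1)!}t^{k}(1-t)^{g-1-k}$.

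\smallskip\noindent\textbf{Step 4 (where the work is).} The one delicate input is the precise location of the non-$\theta$ face of the nef cone --- that no curve class on $C_{g-1}$ beats the small diagonal --- which is what Lemma~\ref{nef_small_diag} supplies and what forces the endpoint $1+\tfrac{1}{g^{2}-g-1}$. (Note that by Theorem~A the pseudoeffective boundary lies strictly farther out, at $1+\tfrac{1}{2g-3}$, so on the interval in question we remain safely inside the big cone.) Everything else --- the defining properties of $\psi$, its numerical action, and the binomial bookkeeping --- is formal once the codimension estimate of Theorem~C and Poincar\'e's formula are in hand.
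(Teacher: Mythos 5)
Your proof is correct and follows essentially the same route as the paper: the paper's proof of Theorem E also transports $\theta-tx$ via the residuation map (Proposition \ref{picard_isom} and Proposition \ref{explicit_isom} give $\widetilde{\tau}^{\ast}(\theta-tx)=(1-t)\theta+tx$ and the invariance of volume), then invokes Lemma \ref{nef_small_diag} to place $(1-t)\theta+tx$ in the nef cone for $t\leq 1+\frac{1}{g^{2}-g-1}$ and computes the top self-intersection by the Poincar\'{e} formula. The only cosmetic difference is that you pin down $\psi^{\ast}x=\theta-x$ by the involution relation plus a test curve, where the paper identifies $\widetilde{\tau}^{\ast}(\theta-x)$ with the ample class $\widehat{X}$ via the subordinate-locus class formula of Lemma \ref{subordinate}.
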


We pause to discuss the proofs of the theorems listed thus far.  Recall that for a smooth projective curve $C$ and two positive integers $r$ and $d,$ there exists a fine moduli variety $G^{r}_{d}(C)$ parametrizing linear series (complete or otherwise) of degree $d$ and dimension $r$ on $C.$  In particular, $G^{0}_{d}(C)$ is canonically isomorphic to $C_{d}.$   

When $2 \leq d \leq g-1,$ there is a birational map $$\widetilde{\tau}:G^{g-d-1}_{2g-2-d}(C) \dashrightarrow C_{d}$$ defined by taking a complete linear series $|\mathcal{L}|$ to the unique element of its residual series $|K_{C}\otimes\mathcal{L}^{-1}|.$  This map is the moral heart of the paper and the technical heart of the proofs of Theorems A,C, D, and E.  It follows from Gieseker's Theorem that $\widetilde{\tau}$ is an isomorphism in codimension 1 of smooth projective varieties when $C$ is a general curve of genus $g$, and as a result we can use $\widetilde{\tau}$ to transfer information about divisors from $G^{g-d-1}_{2g-2-d}(C)$ to $C_{d}$ and back via Hartogs' theorem.   

We now describe the divisor classes from Theorem A.  For a line bundle $\mathcal{L}$ on $C$ satisfying $e:=\dim|\mathcal{L}| \leq d \leq \deg{\mathcal{L}},$ the $e-$dimensional cycle $\Gamma_{d}(\mathcal{L})$ on $C_{d}$ parametrizes all effective divisors $D$ of degree $d$ that are subordinate to $|\mathcal{L}|,$ i.e. those $D$ for which $|\mathcal{L}(-D)| \neq \emptyset.$  A natural inner bound for the effective cone of $C_{g-1}$ in the fourth quarter of the $(\theta,x)-$plane is furnished by the cycle $\Gamma_{g-1}(K_{C}(-p))$ (where $p$ is a given point in $C$).  This is a divisor whose class is $\theta-x.$  

In Theorem 5 of \cite{Kou}, Kouvidakis obtains $\theta-2x$ as an outer bound for the effective cone of $C_{g-1}$ by degeneration to a hyperelliptic curve.  The inner bound $\theta-x$ reflects the fact that the canonical series $|K_{C}|$ separates 0-jets (i.e. is basepoint free) and Kouvidakis' outer bound is obtained by degeneration to the case in which $|K_{C}|$ fails to separate 1-jets (i.e. fails to be an immersion).  The divisor we obtain which spans a boundary ray of the effective cone of $C_{g-1}$ (in the nonhyperelliptic case) is supported on the set $$\bigcup_{p \in C}\Gamma_{g-1}(K_{C}(-2p))$$ and so it reflects in a precise manner the fact that $K_{C}$ separates 1-jets when $C$ is nonhyperelliptic.  Indeed, this divisor is the ramification locus of the Gauss map $\gamma: C_{g-1} \dashrightarrow (\mathbb{P}^{g-1})^{\ast}$ which assigns to a general $D \in C_{g-1}$ the hyperplane spanned by the image of $D$ under the canonical embedding.

More generally, our divisor on $C_{d}$ for $d \leq g-1$ is supported on the set $$\bigcup_{p \in C}\Gamma_{d}(K_{C}(-(g-d+1)p))$$  This is the pullback via $\widetilde{\tau}$ of the divisor on $G^{g-d-1}_{2g-2-d}(C)$ parametrizing linear series with ramification points of degree $g-d+1$ or higher.  

The bound for the effective cone implied by (i) of Theorem A is not sharp in the range $3 \leq d \leq \frac{g}{2}$; Theorem 5 of \cite{Kou} implies that the class $\theta-2x$ is effective for all such $d$.  Our next result gives a new bound for the effective cone of $C_{\frac{g+1}{2}}.$  Recall that $W^{r}_{d}(C)$ is the determinantal subvariety of $\textnormal{Pic}^{d}(C)$ parametrizing line bundles with at least $r+1$ global sections.

\begin{thmf}
Let $k \geq 3$ be an integer and let $C$ be a general curve of genus $2k-1.$  
\item[(i)]The class $\theta-(2-\frac{1}{k})x$ on $C_{k}$ is $\mathbb{Q}-$effective.
\item[(ii)] For all $t > 2-\frac{1}{k},$ the stable base locus of any divisor with class proportional to $\theta-tx$ contains the surface $$Z_{k}:=\bigcup_{\mathcal{L} \in W^{1}_{k+1}(C)}\Gamma_{k}(\mathcal{L})$$
\item[(iii)]The class $\theta-(2-\frac{1}{k})x$ spans a boundary ray of the effective cone of $C_{k}$ when $k=3.$  That is, the class $\theta-\frac{5}{3}x$ spans a boundary ray of the effective cone of $C_{3}$ when $C$ is a general curve of genus 5.
\end{thmf}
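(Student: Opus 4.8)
The plan is to prove the three parts in order, with (i) and (ii) following the template already established for Theorem A via the birational map $\widetilde{\tau}$, and (iii) being the genuinely new input. For part (i), I would set $d=k$, $g=2k-1$, so that $g-d-1 = k-2$ and $2g-2-d = 3k-4$; thus $\widetilde{\tau}: G^{k-2}_{3k-4}(C) \dashrightarrow C_{k}$ is the relevant residuation map, an isomorphism in codimension one for general $C$ by Gieseker. The class $\theta - (2-\frac1k)x$ should be realized as (a component of) the $\widetilde{\tau}$-pullback of a natural determinantal divisor on $G^{k-2}_{3k-4}(C)$ — the cycle of linear series that meet $W^1_{k+1}(C)$ in the appropriate incidence sense, whose image on $C_k$ is exactly the surface $Z_k$ in the statement. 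Concretely I would compute the class of $Z_k \subset C_k$ (note $W^1_{k+1}(C)$ has dimension $\rho = g - 2(g-k) = 2k-2-g+... $, i.e. for $g=2k-1$ one gets $\dim W^1_{k+1} = 0$ when... — one should check: $\rho(2k-1,1,k+1) = 2k-1 - 2(2k-1-k-1+1) = 2k-1-2(k-1)=1$, so $W^1_{k+1}(C)$ is a curve, and $\Gamma_k(\mathcal L)$ is $1$-dimensional, giving $\dim Z_k = 2$ as claimed). The class computation is a Poincaré-formula / Macdonald-type intersection-number calculation on $C_k$, expressing $[Z_k]$ in the basis $x^{k-2}\theta^2, x^{k-1}\theta, x^k$ and reading off that the ray it determines in $N^1$ is $\theta - (2-\frac1k)x$ after the appropriate duality; effectivity is then immediate since $Z_k$ is represented by an honest subvariety (or by the divisorial part of the pullback of an effective determinantal class).

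For part (ii), I would argue exactly as in the proof of Theorem C / Theorem \ref{stab_classes}: for $t$ strictly larger than the boundary value $2-\frac1k$, any effective divisor $E$ with $[E]$ proportional to $\theta - tx$ must contain $Z_k$ in its base locus, because restricting to a general curve $\Gamma_k(\mathcal L)$ (with $\mathcal L \in W^1_{k+1}(C)$) gives a divisor of negative degree on that curve — the key numerical point being that $Z_k$ is swept out by curves $\Gamma_k(\mathcal L)$ on which $\theta$ and $x$ restrict to classes whose ratio is precisely $2-\frac1k$, so any class with steeper slope pairs negatively. One computes $\Gamma_k(\mathcal L)\cdot x$ and $\Gamma_k(\mathcal L)\cdot \theta$ via the standard formulas for subordinate loci (these curves are isomorphic to $\mathbb P^1$'s or to translates inside $C_k$, and the intersection numbers are $\deg \mathcal L - k + 1 = 2$ against... — I would pin these down using the description $\Gamma_k(\mathcal L) = \{D : |\mathcal L(-D)|\neq\emptyset\}$ and the fact that for $\mathcal L$ a $g^1_{k+1}$ this is the image of a $\mathbb P^1$-bundle construction). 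Passing to the stable base locus is then a matter of noting the containment holds for every $m$ with $mE$ Cartier, so $Z_k \subseteq \mathbf B(\theta - tx)$.

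For part (iii), specialize to $k=3$, $g=5$: then $C_3$ is a threefold, $Z_3$ is a surface, and the candidate boundary class is $\theta - \frac53 x$. Effectivity is (i); what remains is the outer bound, i.e. that no class $\theta - tx$ with $t > \frac53$ is pseudoeffective. I would prove this by finding a covering family of curves in $C_3$ on which $\theta - \frac53 x$ has degree zero and on which $\theta - tx$ has negative degree for $t>\frac53$ — the natural candidates being again the curves $\Gamma_3(\mathcal L)$ for $\mathcal L$ ranging over the (one-dimensional) $W^1_4(C)$, whose union $Z_3$ is a surface but which, together with a second moving family (perhaps the fibers of $a_3: C_3 \to \mathrm{Pic}^3(C)$, where $\theta$ has degree $0$, or the rational curves in the $\widetilde\tau$-image of lines on $G^1_7(C)$), span enough of $N_1(C_3)$ to cut out the dual cone. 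Equivalently, one can transport the question to $G^{1}_{7}(C)$ for a general genus-$5$ curve, where $G^1_7(C)$ is a fourfold... wait, $\dim G^1_7(C) = \rho = 5 - 2(5-7+1)\cdot... = \rho(5,1,7) = 5 - 2(5-7+1) = 5+2 = 7$? No: $\rho = g - (r+1)(g-d+r) = 5 - 2(5-7+1) = 5 - 2(-1) = 7$, which exceeds $\dim$; so $G^1_7$ has the expected dimension capped — in any case $\widetilde\tau$ identifies it birationally with $C_3$, and I would use the Brill–Noether geometry there (explicit curves, Petri generality) to produce the extremal contraction or the exhausting family of curves.

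The main obstacle I anticipate is part (iii): proving that $\theta - \frac53 x$ is actually extremal — not merely effective — for $C_3$ on a general genus-$5$ curve. Parts (i) and (ii) are computational and structurally parallel to Theorems A and C, but extremality requires either (a) a complete description of the cone of moving curves dual to $\overline{\mathrm{Eff}}(C_3)$, which means producing a covering family on which the class has degree exactly zero \emph{and} verifying no effective class lies strictly beyond it, or (b) a degeneration argument in the spirit of Kouvidakis' Theorem 5 of \cite{Kou}, degenerating to a special (e.g. trigonal or hyperelliptic) genus-$5$ curve where the relevant linear series becomes more special and forces the bound. Reconciling the degeneration bound with the sharp value $\frac53$ — rather than a weaker bound like $2$ — is the delicate point, and I expect it to require the fine structure of $W^1_4$ on a general genus-$5$ curve (which is a smooth curve of known genus by Brill–Noether theory) and a careful intersection-theoretic identity on $C_3$ of the form $(\theta - \frac53 x)\cdot[\text{moving curve}] = 0$ for enough independent moving curves to conclude.
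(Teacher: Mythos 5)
There is a genuine gap in both (i) and (iii); only your plan for (ii) matches a complete argument. For (i), you have conflated the surface $Z_{k}$ with the effective divisor that actually carries the class $\theta-(2-\frac{1}{k})x$. As you yourself compute, $\dim Z_{k}=2$, so for $k\geq 4$ the class $[Z_{k}]$ lives in codimension $k-2>1$ and cannot produce a divisor class; the ``appropriate duality'' you invoke to read off a ray in $N^{1}$ from $[Z_{k}]$ is not an actual operation. The divisor the paper uses is $\bigcup_{\mathcal{L}\in W^{1}_{k+1}(C)}\Gamma_{k}(K_{C}\otimes\mathcal{L}^{-1})$: since $\dim|K_{C}\otimes\mathcal{L}^{-1}|=k-2$ each piece is $(k-2)$-dimensional, and the union over the one-dimensional $W^{1}_{k+1}(C)$ is a divisor. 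It is realized as $E_{(k)}=\pi_{\ast}\sigma^{\ast}(C^{k-2}_{3k-5})$ for the addition map $\sigma:C_{k}\times C_{2k-5}\to C_{3k-5}$ and projection $\pi$ (effectivity follows from Gieseker's dimension count for $C^{k-2}_{3k-5}$), and its class is computed by push--pull together with a nontrivial binomial identity (Lemma \ref{combsum}); the residuation map $\widetilde{\tau}$ plays no role in this step. Your part (ii) is essentially the paper's Lemma \ref{orth}: one checks $\Gamma_{k}(\mathcal{L})\cdot\theta=2k-1$ and $\Gamma_{k}(\mathcal{L})\cdot x=k$, so any class proportional to $\theta-tx$ with $t>2-\frac{1}{k}$ is negative on every $\Gamma_{k}(\mathcal{L})$ and hence has $Z_{k}$ in its stable base locus.

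For (iii), the step you flag as delicate is indeed where your proposal fails to close: the curves $\Gamma_{3}(\mathcal{L})$ sweep out only the surface $Z_{3}$, not all of $C_{3}$, so they are not a covering family and cannot by themselves cut out the dual cone; and no degeneration to a special curve is needed. The actual mechanism is special to $k=3$: the Serre involution $\mathcal{L}\mapsto K_{C}\otimes\mathcal{L}^{-1}$ preserves $W^{1}_{4}(C)$, so $E_{(3)}$ and $Z_{3}$ coincide and $Z_{3}$ is itself a divisor in the threefold $C_{3}$. One shows $E_{(3)}$ is irreducible (it is the image of the irreducible $C\times W^{1}_{4}(C)$ under $(p,\mathcal{L})\mapsto$ the unique element of $|\mathcal{L}(-p)|$, using that $W^{1}_{4}(C)$ is a smooth irreducible curve for general $C$ of genus $5$). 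If an irreducible effective divisor $D'$ had class proportional to $\theta-tx$ with $t>\frac{5}{3}$, then $D'\cdot\Gamma_{3}(\mathcal{L})<0$ for every $\mathcal{L}\in W^{1}_{4}(C)$ by Lemma \ref{orth}, forcing $D'$ to contain every such curve, hence to contain and therefore equal the irreducible divisor $E_{(3)}$ --- contradicting that $E_{(3)}$ has slope exactly $\frac{5}{3}$. This Kouvidakis-style rigidity argument is the missing idea. (Minor point: the relevant Brill--Noether space under residuation for $d=3$, $g=5$ is $G^{1}_{5}(C)$, not $G^{1}_{7}(C)$.)
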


The divisor we consider is supported on the set $$\bigcup_{\mathcal{L} \in W^{1}_{k+1}(C)}\Gamma_{k}(K_{C}\otimes\mathcal{L}^{-1})$$  This is a direct generalization of the divisor shown by Pacienza in \cite{Pac} to span a boundary ray of the \textit{nef} cone of $C_{\frac{g}{2}}.$  
However, it is  \textit{not} nef in general, since in the case $k=3$ its top self-intersection is negative.  We plan to address the nef cone of $C_{\frac{g+1}{2}}$ further in future work.  

Special linear series on an arbitrary curve are poorly understood, and this accounts for much of the difficulty in the study of their symmetric powers.  However, special linear series on \textit{hyperelliptic} curves are understood quite well, and thus their consideration is a natural step in our study. 

\begin{thmg} 
\label{hyperell} 
Let $C$ be a hyperelliptic curve of genus $g$, and let $2 \leq d \leq g.$
\item[(i)] The class $\theta-(g-d+1)x$ spans a boundary ray of the effective cone of $C_{d}.$
\item[(ii)] The class $\theta$ spans a common boundary ray of the nef and movable cones of $C_{d}.$
\item[(iii)] For all $t \in [0,g-d+1],$ $$\textnormal{vol}_{C_{d}}(\theta-tx)=\frac{g!}{(g-d)!} \cdot \bigg(1-\frac{t}{g-d+1}\bigg)^{d}$$ 
\end{thmg}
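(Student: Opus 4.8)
The plan is to reduce all three parts to the geometry of the locus
\[
C^1_d=\{D\in C_d:\dim|D|\geq 1\},
\]
exploiting the fact that for a hyperelliptic curve this locus is a \emph{divisor} with a very transparent structure. Write $H$ for the hyperelliptic pencil and $\iota$ for the hyperelliptic involution. By the Clifford-type classification of special linear series on a hyperelliptic curve, every $L\in W^1_d(C)$ has the form $H\otimes\mathcal O_C(E)$ with $E\in C_{d-2}$, so $W^1_d(C)\cong C_{d-2}$ is irreducible of dimension $d-2$, and $a_d$ restricts to a morphism $C^1_d\to W^1_d(C)$ whose general fibre is the pencil $|H+E|\cong\mathbb P^1$. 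Thus $C^1_d$ is an irreducible divisor of dimension $d-1$ swept out by the family of rational curves $R_E:=\{E+H_0:H_0\in|H|\}$, $E\in C_{d-2}$, each contracted by $a_d$. The first step is to compute $[C^1_d]$: the morphism $C\times C_{d-2}\to C^1_d$, $(q,E)\mapsto q+\iota(q)+E$, is generically two-to-one, $a_d$ carries $q+\iota(q)+E$ to $H\otimes\mathcal O_C(E)$, and pulling $x$ and $\theta$ back to $C\times C_{d-2}$ and intersecting against $x^{d-1}$ and $\theta x^{d-2}$ (using the classical formula $\theta^ax^{d-a}=g!/(g-a)!$ on $C_d$) yields
\[
[C^1_d]=\theta-(g-d+1)x,\qquad [R_E]\cdot\theta=0,\qquad [R_E]\cdot x=1 .
\]

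For part (i): since $d\le g$ the Abel map $a_d\colon C_d\to W_d$ is birational, and it contracts the irreducible divisor $C^1_d$ onto the lower-dimensional $W^1_d(C)$. Hence $[C^1_d]$ is not big — otherwise $[C^1_d]\equiv A+N$ with $A$ ample and $N\geq 0$, and testing against the covering curves $R_E$ (which satisfy $[C^1_d]\cdot R_E=-(g-d+1)<0$) would force $N$ to contain $C^1_d$, giving $-[C^1_d]$ pseudoeffective, impossible since $[C^1_d]$ is a nonzero effective class and the pseudoeffective cone is salient. So $\theta-(g-d+1)x=[C^1_d]$ lies on the boundary of the two-dimensional cone $\overline{\operatorname{Eff}}(C_d)$; as it is not proportional to the half-diagonal class $-\theta+(g+d-1)x$ of Theorem~3 of \cite{Kou}, it must span the other boundary ray.

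For part (ii): $\theta$ is nef but not ample, since it has degree $0$ on $R_E$ while $W^1_d(C)\neq\emptyset$, so $\theta$ spans a boundary ray of $\operatorname{Nef}(C_d)$. For every $\epsilon>0$ we have $(\theta-\epsilon x)\cdot R_E=-\epsilon<0$ and the $R_E$ cover $C^1_d$, so every effective divisor numerically proportional to $\theta-\epsilon x$ contains $C^1_d$; thus $C^1_d$ is a divisorial component of $\mathbf B(\theta-\epsilon x)$ and $\theta-\epsilon x$ is not movable. Since the movable cone is convex, were some $\theta-\epsilon_0x$ with $\epsilon_0>0$ to lie in $\overline{\operatorname{Mov}}(C_d)$ then — as $\theta$ and every ample class also lie there — an interior class $\theta-\epsilon x$ with $0<\epsilon<\epsilon_0$ would be movable, a contradiction. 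Combined with $\theta\in\operatorname{Nef}(C_d)\subseteq\overline{\operatorname{Mov}}(C_d)$, this forces $\theta$ to span a boundary ray of the movable cone as well.

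For part (iii) the idea is a divisorial Zariski (Nakayama $\sigma$-) decomposition. From $[C^1_d]=\theta-(g-d+1)x$ one gets, for $t\in[0,g-d+1]$,
\[
\theta-tx=P_t+\tfrac{t}{g-d+1}\,[C^1_d],\qquad P_t:=\Bigl(1-\tfrac{t}{g-d+1}\Bigr)\theta .
\]
The candidate positive part $P_t$ is nef, hence movable, and $\theta-tx-P_t$ is effective, so by the defining property of the $\sigma$-decomposition $P_\sigma(\theta-tx)\geq P_t$; on the other hand, testing the curves $R_E$ against effective representatives of $\theta-tx+\epsilon A$ ($A$ ample) as $\epsilon\to 0$ gives $\sigma_{C^1_d}(\theta-tx)\geq\tfrac{t}{g-d+1}$, whence $P_\sigma(\theta-tx)\leq P_t$. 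Therefore $P_\sigma(\theta-tx)=P_t$, and since the volume depends only on the positive part and $P_t$ is nef,
\[
\operatorname{vol}_{C_d}(\theta-tx)=P_t^{\,d}=\Bigl(1-\tfrac{t}{g-d+1}\Bigr)^{d}\theta^{d}=\frac{g!}{(g-d)!}\Bigl(1-\tfrac{t}{g-d+1}\Bigr)^{d},
\]
using $\theta^d=g!/(g-d)!$ (asymptotic Riemann--Roch and the Poincar\'e formula, $a_d$ being birational onto $W_d$). The step I expect to require the most care is the identification $P_\sigma(\theta-tx)=P_t$ — that is, that no further divisorial contribution enters the $\sigma$-decomposition; this works out cleanly only because the obvious candidate $(1-\tfrac{t}{g-d+1})\theta$ is already nef, which is a genuinely hyperelliptic phenomenon (contrast Theorem~E). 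Apart from this, the load-bearing ingredients are the Clifford-type structure of $W^1_d(C)$ — giving irreducibility of $C^1_d$ and the contracting family $R_E$ — and the intersection computation $[C^1_d]=\theta-(g-d+1)x$; everything else is formal.
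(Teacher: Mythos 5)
Your argument is correct, and its backbone is the same as the paper's: realize $C^{1}_{d}$ as an irreducible effective divisor of class $\theta-(g-d+1)x$ contracted by the Abel map, and deduce (iii) from the Zariski-type decomposition $\theta-tx=(1-\tfrac{t}{g-d+1})\theta+\tfrac{t}{g-d+1}[C^{1}_{d}]$ together with the fact that the volume of a big class equals the top self-intersection of its nef positive part. The differences lie in the supporting lemmas, and they are worth noting. The paper computes $[C^{1}_{d}]$ by identifying $C^{1}_{d}$ with the subordinate locus $\Gamma_{d}(\mathcal{L}^{\otimes(d-1)})$ of the $(d-1)$-st multiple of the hyperelliptic pencil (Clifford plus Riemann--Roch) and quoting the class formula for subordinate loci, where you push forward from $C\times C_{d-2}$; for (i) the paper simply observes that the exceptional divisor of the divisorial contraction $a_{0,d}$ cannot be big, and for (ii) it invokes Nakamaye's theorem to see that $\textbf{B}_{+}(\theta)$ contains the divisor $C^{1}_{d}$. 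Your substitute for both --- intersecting against the covering family of contracted rational curves $R_{E}$ with $[C^{1}_{d}]\cdot R_{E}<0$ --- is more elementary and self-contained, and it has the added benefit of producing the quantitative bound $\sigma_{C^{1}_{d}}(\theta-tx)\geq\tfrac{t}{g-d+1}$ that pins down the negative part in (iii); the paper obtains the same identification from the maximality property of Boucksom's decomposition once $\theta$ is known to span a boundary of the movable cone. The only step I would tighten is the convexity argument in your (ii): ``an interior class $\theta-\epsilon x$ would be movable'' uses the standard but unstated fact that the interior of the closed movable cone consists of positive combinations of classes whose stable base loci have no divisorial component; alternatively, your own observation that every effective representative of $\theta-\epsilon x$ contains $C^{1}_{d}$ with multiplicity at least $\tfrac{\epsilon}{g-d+1}$ already excludes $\theta-\epsilon x$ from the closed movable cone directly, with no convexity needed.
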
  

The proof of this result, which is given in Section \ref{hyp}, is based on the observation that if $C$ is hyperelliptic, then the Abel map $a_{d}:C_{d} \rightarrow \textrm{Pic}^{d}(C)$ is a divisorial contraction for $2 \leq d \leq g.$  (Note that the converse statement is also true by Martens' Theorem.)

Using a result of Pirola (Proposition \ref{pirola}) we are able to deduce that $N^{1}_{\mathbb{R}}(C_{d})$ is 2-dimensional when $C$ is a general hyperelliptic curve (Corollary \ref{n_s_rank}).  Combining (i) with Theorem 3 of \cite{Kou} then yields      

\begin{core1}
If $C$ is a general hyperelliptic curve of genus $g,$ then for $2 \leq d \leq g$ the effective cone of $C_{d}$ is spanned by the half-diagonal class $-\theta+(g+d-1)x$ and the class $\theta-(g-d+1)x.$
\end{core1}  

Since the nef and movable cone of $C_{d}$ share the ray spanned by $\theta$ as a common boundary, every big divisor class in the fourth quarter of the $(\theta,x)-$plane admits an "honest" Zariski decomposition which can be used to compute its volume; this is essentially the proof of (iii) of Theorem G.  Setting $t=1$ in both Theorem E and (iii) of Theorem G yields 
\begin{core2}
Let $C$ be a curve of genus $g \geq 4.$  Then
$$\textnormal{vol}_{C_{g-1}}(\theta-x)=\begin{cases} 
1&\textnormal{if $C$ is general nonhyperelliptic}\\
\frac{g!}{2^{g-1}}&\textnormal{if $C$ is general hyperelliptic}\end{cases}$$ 
\end{core2}
It is straightforward to check that when $C$ is nonhyperelliptic and $p_{1},...,p_{g-1}$ are general points on $C,$ the intersection of the divisors $\Gamma_{g-1}(K_{C}(-p_{1})), \dots \Gamma_{g-1}(K_{C}(-p_{g-1}))$ is the union of $C^{1}_{g-1}$ and the unique element of the linear system $|K_{C}(-p_{1} \dots -p_{g-1})|.$  As mentioned earlier, each of the divisors $\Gamma_{g-1}(K_{C}(-p_{i}))$ has numerical class $\theta-x.$  Consequently the formula $\textnormal{vol}_{C_{g-1}}(\theta-x)=1$ bears out the interpretation of the volume as "moving self-intersection." 

By a well-known elementary identity, $\frac{g!}{2^{g-1}}$ is an odd integer when $g$ is a power of $2$ and fails to be an integer otherwise, so that symmetric powers of hyperelliptic curves furnish a class of examples of integral divisors with non-integral volume.  Another class of such examples is treated in Section 2.3B of \cite{Laz1}.

We do not touch on the interesting issues concerning the effective cone of $C_{2};$ the curious reader is referred to \cite{Chan} and \cite{Ross}.    
 
\bigskip

\textbf{Acknowledgments:} The bulk of this paper is my Ph.D thesis at Stony Brook University.  I thank my advisor, Jason Starr, for his encouragement and support, as well as Lawrence Ein, Rob Lazarsfeld, Julius Ross, and Dror Varolin for valuable discussions.

I would also like to thank Li Li, Aleksey Zinger, and the anonymous referee for useful comments on the manuscript, Gianluca Pacienza for inviting me to IRMA to give a talk on this material, and Olivier Debarre, whose unpublished note has helped inspire the direction of this work.     

\bigskip

\textbf{Notation and Conventions:} We work over the field of complex numbers.  $C$ will always denote a smooth projective curve.  All cycle classes on smooth varieties lie in the algebraic cohomology ring with coefficients in $\mathbb{R}$.  If $\mathcal{Z}$ is a subvariety of the moduli space $\mathcal{M}_{g}$ of smooth projective curves of genus $g$, we say that a property holds for \textit{a general curve of $\mathcal{Z}$} if it holds on the complement of the union of countably many proper subvarieties of $\mathcal{Z}.$  If $\mathcal{Z}=\mathcal{M}_{g},$ we say that the property holds \textit{for a general curve of genus $g$}. 
\section{Preliminaries on $C_{d}$}
\subsection{The N\'{e}ron-Severi group of $C_{d}$}
Recall that the N\'{e}ron-Severi group $NS(X)$ of a smooth projective variety $X$ is the additive group of divisors on $X$ modulo algebraic equivalence.  

\medskip

\textbf{Definition:} The \textbf{(real) N\'{e}ron-Severi space} $N^{1}_{\mathbb{R}}(X)$ of $X$ is the real vector space $NS(X) \otimes_{\mathbb{Z}} \mathbb{R}.$

\medskip

The following result is well-known.
\begin{prop}
\label{ns_group}
For any $d \geq 2$ there is an isomorphism $N^{1}_{\mathbb{R}}(C_{d}) \simeq \mathbb{R} \oplus N^{1}_{\mathbb{R}}(J(C)).$ \hfill \qedsymbol
\end{prop}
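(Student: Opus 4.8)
The plan is to exhibit the isomorphism $N^{1}_{\mathbb{R}}(C_{d}) \simeq \mathbb{R} \oplus N^{1}_{\mathbb{R}}(J(C))$ by splitting off the contribution of the class $x$ (the class of the ample divisor $i_p(C_{d-1})$) from the contribution pulled back from the Jacobian via the Abel map $a_d : C_d \to \mathrm{Pic}^d(C) \cong J(C)$. Concretely, I would construct a map $\mathbb{R} \oplus N^{1}_{\mathbb{R}}(J(C)) \to N^{1}_{\mathbb{R}}(C_d)$ sending $(t,\beta) \mapsto t\,x + a_d^{*}\beta$, and argue it is an isomorphism of real vector spaces. The surjectivity amounts to showing that $N^1_{\mathbb R}(C_d)$ is spanned by $x$ together with the image of $a_d^*$; the injectivity amounts to showing that $x$ is not in the image of $a_d^*$ (so the $\mathbb{R}$-summand is genuinely separate) and that $a_d^*$ is itself injective on $N^1_{\mathbb R}(J(C))$.

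The technical input I would use is the classical description of the cohomology of a symmetric product. For $d \geq 2$ the Abel map $a_d$ realizes $C_d$ as a $\mathbb{P}^{d-g}$-bundle when $d \geq 2g-1$, and in general one has the Macdonald/Mattuck computation of $H^{*}(C_d,\mathbb{Z})$: pulling back along $a_d$ together with the hyperplane-type class $x$ generates the cohomology ring, and in degree $2$ one gets precisely $H^2(C_d) \cong \mathbb{Z}x \oplus a_d^{*}H^2(J(C))$ (this uses $d\ge 2$ so that $H^1(C)\wedge H^1(C)$ injects appropriately and no relation kills $x$). Restricting to the subspace of Hodge classes of type $(1,1)$, i.e. to Néron–Severi, and tensoring with $\mathbb{R}$ yields the stated direct sum decomposition, with the $\mathbb{R}$-factor spanned by $x$ and the other factor the image of $a_d^{*} : N^1_{\mathbb R}(J(C)) \hookrightarrow N^1_{\mathbb R}(C_d)$. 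Here one must check that $a_d^*$ is injective on Néron–Severi: this follows because $a_d$ is surjective with connected fibers (for $d \le g$ the generic fiber is a point, for larger $d$ a projective space), so $a_d^{*}$ is injective already on $H^2$.

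The main obstacle — really the only subtle point — is verifying that $x$ is not algebraically equivalent to any class pulled back from $J(C)$, equivalently that the sum $\mathbb{R}x + a_d^{*}N^1_{\mathbb R}(J(C))$ is direct. This is where one invokes intersection-theoretic data on $C_d$: for instance, if $x = a_d^{*}\alpha$ for some class $\alpha$ on $J(C)$, then since $a_d^{*}$ of a top-degree class on $J(C)$ of the wrong dimension behaves incompatibly with the fact that $x$ restricts to the ample generator on a fiber $i_p(C_{d-1})$ over... one gets a contradiction by computing $x \cdot (\text{fiber class})$ or $x^d$ versus what a pullback class can achieve. Concretely, $x$ restricts nontrivially to fibers of $a_d$ over points of $a_d(C_d)$ whenever those fibers are positive-dimensional (the case $d \geq g$), and for $d < g$ one instead notes $a_d$ is generically finite so the statement is about $a_d^*$ being injective plus a rank count; in that regime the genus-$g$ assumption and the structure of $H^1(C)$ guarantee $\dim H^2(C_d) = 1 + \dim H^2(J(C))$, forcing directness. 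Since the proposition is labeled ``well-known'' and stated with a QED box and no proof, I would keep the argument to a short paragraph citing Macdonald's computation of $H^{*}(C_d)$ and the behavior of $a_d^{*}$, rather than redoing the cohomology calculation from scratch.
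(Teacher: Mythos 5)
The paper does not actually prove this proposition---it is recorded as ``well-known'' with a \qedsymbol\ and no argument---so there is no in-paper proof to compare against; your sketch is the standard argument the author is implicitly citing, and it is essentially correct. One justification you give is wrong, though: for $d<g$ the Abel map $a_{d}$ is \emph{not} surjective (its image is $W_{d}(C)\subsetneq \mathrm{Pic}^{d}(C)$, of dimension $d$), so you cannot deduce injectivity of $a_{d}^{\ast}$ on $H^{2}$ from ``surjective with connected fibers.'' The clean way to get both the injectivity of $a_{d}^{\ast}$ and the directness of $\mathbb{R}x\oplus a_{d}^{\ast}N^{1}_{\mathbb{R}}(J(C))$ in all cases $d\geq 2$ is the Macdonald/Mattuck computation you already invoke: $H^{1}(C_{d})\simeq H^{1}(C)$ via $a_{d}^{\ast}$, and the relations in $H^{\ast}(C_{d})$ begin in degree $d+1\geq 3$, so $H^{2}(C_{d})\simeq \mathbb{Z}x\oplus \wedge^{2}H^{1}(C)$ with $\wedge^{2}H^{1}(C)=a_{d}^{\ast}H^{2}(J(C))$; intersecting with the $(1,1)$-classes and tensoring with $\mathbb{R}$ then gives the statement directly, and the somewhat hand-waving intersection-theoretic contradiction in your last paragraph becomes unnecessary.
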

Under this isomorphism, we may think of the summand $\mathbb{R}$ as being generated by $x$, and of the class $\theta$ as (not surprisingly) being contained in $N^{1}_{\mathbb{R}}(J(C)).$  In particular, $x$ and $\theta$ are linearly independent.

We may identify $NS(J(C))$ with the group $\textnormal{End}^{s}(J(C))$ of endomorphisms preserving the principal polarization (see Proposition 5.2.1 in \cite{BirLan} for details).  Since $NS(J(C))$ is torsion-free, we have that $N^{1}_{\mathbb{R}}(J(C))$ is 1-dimensional precisely when inversion is the only nontrivial automorphism in $\textnormal{End}^{s}(J(C)).$ 

The fact that this holds for a general Jacobian is due to Lefschetz.  The following refinement is due to Pirola.  (Recall that $\mathcal{J}_{g}$ is the Jacobian locus in the moduli space $\mathcal{A}_{g}$ of principally polarized abelian varieties of dimension $g.$)
\begin{prop}
\label{pirola}
\textnormal{((ii) of Proposition 3.4 in \cite{Pir})} Let $g \geq 2$ be an integer, and let $Y$ be a subvariety of codimension $\leq g-2$ in $\mathcal{J}_{g}.$  Then the rank of the N\'{e}ron-Severi group of an abelian variety corresponding to a general point of $Y$ is 1. \hfill \qedsymbol
\end{prop}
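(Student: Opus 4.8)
The statement is a quantitative sharpening of Lefschetz's theorem that a general abelian variety---in particular a general Jacobian---has N\'eron--Severi rank $1$; the content is a bound on the codimension of the locus where the rank jumps up. Put
$$\mathcal{J}_{g}^{\,\circ}=\{\,[C]\in\mathcal{J}_{g}:\operatorname{rank}NS(J(C))\geq 2\,\}.$$
Identifying $NS(J(C))$ with the lattice of Rosati-symmetric endomorphisms of $J(C)$, and using that for each isomorphism type of polarized endomorphism algebra the locus in $\mathcal{A}_{g}$ of abelian varieties carrying such an endomorphism is closed, we see that $\mathcal{J}_{g}^{\,\circ}$ is a countable union of closed subvarieties of $\mathcal{J}_{g}$. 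The proposition then reduces to the inequality $\dim\mathcal{J}_{g}^{\,\circ}\leq 2g-2$: if $Y\subseteq\mathcal{J}_{g}$ is irreducible of codimension $\leq g-2$, then $\dim Y\geq(3g-3)-(g-2)=2g-1$, so $Y$ is contained in no component of $\mathcal{J}_{g}^{\,\circ}$, hence $Y\cap\mathcal{J}_{g}^{\,\circ}$ is a countable union of proper closed subvarieties of $Y$, and a general point of $Y$ avoids it.

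To prove $\dim\mathcal{J}_{g}^{\,\circ}\leq 2g-2$ I would argue infinitesimally, following Pirola. Let $B\subseteq\mathcal{J}_{g}$ be irreducible with $NS$ of constant rank $\geq 2$ along $B$; after shrinking $B$ and an \'etale base change, there is a family $f:\mathcal{C}\to B$ of smooth genus-$g$ curves inducing a generically finite map $B\to\mathcal{M}_{g}$, together with an endomorphism $\phi$ of the relative Jacobian that on each fiber is Rosati-symmetric and is not a homothety. Fix a general $b_{0}\in B$, write $C=C_{b_{0}}$ and $\phi=\phi_{b_{0}}$, and consider the Kodaira--Spencer inclusion $T_{b_{0}}B\hookrightarrow H^{1}(C,T_{C})$. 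For $\xi\in H^{1}(C,T_{C})$, the obstruction to $\phi$ remaining an endomorphism of the polarized Hodge structure in the direction $\xi$ is the image of $\xi$ under a linear map to a subquotient of $\operatorname{Hom}(H^{0}(C,K_{C}),H^{1}(C,\mathcal{O}_{C}))$, built from the cup-product pairing $H^{1}(C,T_{C})\otimes H^{0}(C,K_{C})\to H^{1}(C,\mathcal{O}_{C})$ and the action of $\phi$ on $H^{0}(C,K_{C})$. The goal is to show that this obstruction map has rank at least $g-1$; granting that, $T_{b_{0}}B$ has corank $\geq g-1$ in $H^{1}(C,T_{C})$, so $\dim B\leq(3g-3)-(g-1)=2g-2$. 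The bound is sharp: a general double cover $C\to E$ of an elliptic curve has $J(C)$ isogenous to a product, hence $\operatorname{rank}NS(J(C))\geq 2$, and such covers form an irreducible family of dimension $1+(2g-3)=2g-2$ (the modulus of $E$, together with a branch divisor of degree $2g-2$ on $E$ taken modulo translation).

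The crux, and the step I expect to be genuinely hard, is the rank estimate $\geq g-1$ for the obstruction map---everything above is either formal or a routine count. Up to Serre duality the relevant pairing is the canonical multiplication $H^{0}(C,K_{C})\otimes H^{0}(C,K_{C})\to H^{0}(C,K_{C}^{\otimes 2})$, governed by the theorems of Noether and Petri, and one must show that a non-scalar $\phi$ acting on $H^{0}(C,K_{C})$ forces the first-order deformations of $C$ preserving $\phi$ to lie in a subspace of $H^{1}(C,T_{C})$ of codimension at least $g-1$. I would approach this by decomposing $H^{0}(C,K_{C})$ into $\phi$-eigenspaces (possible since Rosati-symmetric endomorphisms act semisimply), noting that ``$\phi$ stays algebraic to first order in the direction $\xi$'' forces the cup-product action of $\xi$ to annihilate the off-diagonal blocks of the multiplication, and bounding the number of conditions so obtained from below using the base-point-freeness of $|K_{C}|$; for hyperelliptic $C$, where the canonical map is not an embedding, one argues separately using the explicit geometry of $C_{d}$ afforded by Martens' theorem. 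As an organizing device---and a partial cross-check---one can split via Poincar\'e reducibility into the cases ``$J(C)$ not simple'' and ``$J(C)$ simple with $\operatorname{End}(J(C))\otimes\mathbb{Q}\neq\mathbb{Q}$''; in the non-simple case, when the relevant abelian subvariety is generated by a subcurve, the infinitesimal bound is independently confirmed by a dimension count of Hurwitz-type parameter spaces of maps from $C$ to an abelian variety of dimension $<g$, all of which have dimension $\leq 2g-2$. The hypothesis $g\geq 2$ and the small cases $g=2,3$ (where $\overline{\mathcal{J}_{g}}=\mathcal{A}_{g}$) cause no trouble: there the jump locus is the union of the classical Humbert and bielliptic loci, whose components have codimension $\geq g-1$.
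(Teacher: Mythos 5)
The paper does not prove this statement at all: it is quoted verbatim from Pirola (Proposition 3.4(ii) of \cite{Pir}) and used as a black box, so there is no internal proof to compare against. Your outline does track the strategy of Pirola's actual argument --- reduce to the dimension bound $\dim\mathcal{J}_{g}^{\,\circ}\leq 2g-2$ for the Noether--Lefschetz-type jump locus, then establish that bound by an infinitesimal variation of Hodge structure computation --- and the formal reductions (the jump locus is a countable union of closed subvarieties; an irreducible $Y$ of codimension $\leq g-2$ has dimension $2g-1$ and so cannot lie in any component of the jump locus) are correct, as is the sharpness example via bielliptic curves, which matches the paper's own remark that the bound fails for the $(2g-2)$-dimensional bielliptic locus.

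However, as a proof the proposal has a genuine gap exactly where you flag it: the rank estimate $\geq g-1$ for the obstruction map is the entire content of the theorem, and you state it as a goal with only a vague plan (eigenspace decomposition plus base-point-freeness of $|K_{C}|$) rather than an argument. Nothing in the proposal explains why a non-scalar Rosati-symmetric $\phi$ imposes at least $g-1$ independent conditions on first-order deformations; base-point-freeness of $|K_{C}|$ alone is far from enough, and the real input is the structure of the multiplication map $\mathrm{Sym}^{2}H^{0}(K_{C})\to H^{0}(K_{C}^{\otimes 2})$ (Noether/Petri). The hyperelliptic case is a second unresolved point, not a footnote: the hyperelliptic locus has dimension $2g-1>2g-2$ and Noether's theorem fails there, so your claimed bound requires showing the jump locus meets the hyperelliptic locus properly, and the appeal to ``Martens' theorem'' does not obviously bear on the IVHS computation. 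Indeed the paper's Corollary 2.3 applies the proposition precisely to the codimension-$(g-2)$ hyperelliptic locus, so this borderline case cannot be waved off. As written, the proposal is a correct road map to Pirola's proof but not a proof.
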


This result is sharp.  As pointed out in Remark 3.5 of \textit{loc. cit.}, its conclusion fails if we take $Y$ to be the $(2g-2)-$dimensional locus parametrizing Jacobians of bielliptic curves.

A curve $C$ is called \textit{e}-gonal if $e=\min\{k : C \textnormal{ admits a \textit{k}-to-1 covering of }\mathbb{P}^{1}\}.$
\begin{cor}
\label{n_s_rank}
If $C$ is a curve corresponding to a general point of the $e-$gonal locus of $\mathcal{M}_{g}$ (where $e \geq 2$) then $N^{1}_{\mathbb{R}}(C_{d})$ is 2-dimensional for all $d \geq 2.$  In particular, this is true of both the general curve of genus $g$ and the general hyperelliptic curve of genus $g.$
\end{cor}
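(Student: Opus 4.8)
The plan is to deduce everything from Propositions \ref{ns_group} and \ref{pirola}. By Proposition \ref{ns_group} we have $N^{1}_{\mathbb{R}}(C_{d}) \simeq \mathbb{R} \oplus N^{1}_{\mathbb{R}}(J(C))$ for all $d \geq 2$, so it suffices to show that $N^{1}_{\mathbb{R}}(J(C))$ is $1$-dimensional --- equivalently, that $J(C)$ has N\'{e}ron--Severi rank $1$ --- for a general curve $C$ of the $e$-gonal locus $\mathcal{M}_{g,e} \subseteq \mathcal{M}_{g}$. To put ourselves in a position to apply Proposition \ref{pirola}, I would let $j : \mathcal{M}_{g} \to \mathcal{A}_{g}$ denote the Torelli morphism and let $Y \subseteq \mathcal{J}_{g}$ be the closure of $j(\mathcal{M}_{g,e})$. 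Since $j$ is injective on points (Torelli's theorem) and $j(\mathcal{M}_{g,e})$ is dense in $Y$, a point general in $Y$ lies in $j(\mathcal{M}_{g,e})$ and corresponds, via $j^{-1}$, to a point general in $\mathcal{M}_{g,e}$; so once we know that $\textnormal{codim}_{\mathcal{J}_{g}}Y \leq g-2$, Proposition \ref{pirola} gives the conclusion.

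The one step with real content is the estimate $\textnormal{codim}_{\mathcal{J}_{g}}Y \leq g-2$. Because $j$ is injective on points it is quasi-finite, so $\dim Y = \dim j(\mathcal{M}_{g,e}) = \dim \mathcal{M}_{g,e}$, and likewise $\dim \mathcal{J}_{g} = \dim \mathcal{M}_{g}$; hence it is enough to bound $\textnormal{codim}_{\mathcal{M}_{g}}\mathcal{M}_{g,e}$. I would invoke here the classical fact that $\mathcal{M}_{g,e}$ is irreducible of dimension $\min(2g+2e-5,\, 3g-3)$. When $\rho(g,1,e) = 2e-g-2 \geq 0$ this says $\mathcal{M}_{g,e}$ is dense in $\mathcal{M}_{g}$ and there is nothing to prove; when $\rho(g,1,e) < 0$ it gives $\textnormal{codim}_{\mathcal{M}_{g}}\mathcal{M}_{g,e} = g - 2e + 2$, which is $\leq g-2$ precisely because $e \geq 2$. (If one prefers to see the dimension directly: the Hurwitz scheme of degree-$e$, simply branched covers $C \to \mathbb{P}^{1}$ with $C$ of genus $g$ has dimension equal to its number $2g-2+2e$ of branch points, and its map to $\mathcal{M}_{g}$ has $3$-dimensional fibres --- a $\textnormal{PGL}_{2}$-orbit --- over a general point of the image, where $G^{1}_{e}(C)$ is finite.)

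For the last two assertions I would simply note that the general curve of genus $g$ is a general point of the $e_{0}$-gonal locus, where $e_{0} = \lceil \tfrac{g+2}{2}\rceil \geq 2$ is the generic gonality and $\mathcal{M}_{g,e_{0}}$ is dense in $\mathcal{M}_{g}$, while the general hyperelliptic curve of genus $g$ is a general point of the $2$-gonal (hyperelliptic) locus; both are instances of the case already treated. Note that for $e=2$ the bound $g-2e+2 = g-2$ is met with equality, which is consistent with the sharpness of Proposition \ref{pirola} noted in the remark preceding it.

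The main obstacle is the codimension bound, and within it the only delicate points are (a) that $j$ does not decrease the codimension of $\mathcal{M}_{g,e}$ upon transport to $\mathcal{J}_{g}$, which I am handling via the quasi-finiteness coming from Torelli's theorem, and (b) the dimension count for $\mathcal{M}_{g,e}$, which must be carried out at a \emph{general} point of the gonal stratum (where $G^{1}_{e}(C)$ is zero-dimensional) --- elsewhere the Hurwitz fibres jump and the naive count is wrong. Everything else is formal, given Propositions \ref{ns_group} and \ref{pirola}.
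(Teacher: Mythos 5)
Your proposal is correct and follows exactly the paper's route: reduce via Proposition \ref{ns_group} to the N\'{e}ron--Severi rank of $J(C)$, transport the $e$-gonal locus to $\mathcal{J}_{g}$ via Torelli, and apply Proposition \ref{pirola} using the dimension formula $\min\{3g-3,2g+2e-5\}$ to get codimension $\max\{0,g-2e+2\} \leq g-2$. The paper's own proof is a two-line version of the same argument; your extra care about quasi-finiteness of the Torelli map and the Hurwitz count is sound but not a different method.
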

\begin{proof}
Since the dimension of the $e-$gonal locus in $\mathcal{M}_{g}$ is $\min\{3g-3,2g+2e-5\},$ its image under the Torelli embedding is of codimension $\max\{0,g-2e+2\}$ in $\mathcal{J}_{g}.$  So the result follows immediately from Propositions \ref{ns_group} and \ref{pirola}.
\end{proof}
\subsection{Intersection theory on $C_{d}$}
The following formula, which is a consequence of the Poincar\'{e} formula (p.25 of \cite{ACGH}) will be used freely.  
\begin{lem}
\label{intersect}
For all $0 \leq k \leq d \leq g,$ $$x^{k}\theta^{d-k}=\displaystyle\frac{g!}{(g-d+k)!}$$
\end{lem}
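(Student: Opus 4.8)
The statement is a standard consequence of the Poincaré formula, so the plan is to reduce everything to a single computation on $J(C)$ via the Abel map $a_d\colon C_d \to \mathrm{Pic}^d(C)$. First I would recall the Poincaré formula in the form: if $\theta_0$ denotes the class of a theta divisor on $J(C)$, then $\theta_0^{g-m}/(g-m)!$ is represented by the image cycle $W_m$ of the $m$-th Abel map (for $0 \le m \le g$), i.e.\ $[W_m] = \theta_0^{g-m}/(g-m)!$ in $H^{2(g-m)}(J(C))$. The key geometric input relating $x$ and $\theta$ is that $a_d$ restricted to the divisor $i_p(C_{d-1}) \subset C_d$ maps it (birationally, or at least with degree $1$ onto its image when $d \le g$) onto a translate of $W_{d-1}$; equivalently, $x = a_d^*(\text{something})$ only up to the fibers of $a_d$, so one must instead push forward. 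The cleanest route: show that $(a_d)_* (x^k \cap [C_d]) = \binom{?}{?}[W_{d-k}]$ — more precisely, intersecting with $x$ $k$ times cuts $C_d$ down to (a cycle numerically equivalent to) $C_{d-k}$ embedded via adding $k$ fixed general points, and $a_d$ carries this to $W_{d-k}$.

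The main steps, in order: (1) Establish $x^k = [i_{p_1\cdots p_k}(C_{d-k})]$ in $H^{2k}(C_d)$ for general points $p_1,\dots,p_k$, by induction on $k$ using that $x$ is represented by $i_p(C_{d-1})$ and that these divisors meet transversally for general $p_i$ (their intersection being exactly the locus of divisors containing $p_1,\dots,p_k$). (2) Observe that $\theta = a_d^*\theta_0$, so $x^k\theta^{d-k} = \int_{C_{d-k}} a_{d-k}^*(\theta_0^{d-k})$ where I have identified the embedded $C_{d-k}$ with $C_{d-k}$ itself and used compatibility of the Abel maps under adding fixed points (this changes $a_d$ by a translation, which does not affect $\theta_0^{d-k}$). (3) Compute $\int_{C_{d-k}} a_{d-k}^*\theta_0^{d-k}$: since $a_{d-k}\colon C_{d-k}\to W_{d-k}\subset J(C)$ is birational onto its image for $d-k \le g$, this equals $\int_{W_{d-k}} \theta_0^{d-k}$, and by the Poincaré formula $[W_{d-k}]\cdot \theta_0^{d-k} = \frac{\theta_0^{g-(d-k)}}{(g-d+k)!}\cdot\theta_0^{d-k} = \frac{\theta_0^g}{(g-d+k)!} = \frac{g!}{(g-d+k)!}$, using $\theta_0^g = g!$ (the self-intersection of the principal polarization).

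The step I expect to require the most care is step (1)–(2), namely justifying that intersecting with $x$ really does geometrically realize the embedding $C_{d-k}\hookrightarrow C_d$ and that the Abel maps are compatible under this operation up to translation — one must check transversality of the divisors $i_{p_i}(C_{d-1})$ for general $p_i$ and handle the cases $d-k < d \le g$ versus the boundary behavior, though since the hypothesis restricts to $d \le g$ (so all $W_m$ for $m \le d$ have expected dimension and $a_m$ is birational onto $W_m$) this is unobstructed. Everything else is bookkeeping with the Poincaré formula and $\theta_0^g = g!$. Alternatively — and this is probably the slickest writeup — one can cite directly the formula on p.~25 (or Chapter VIII) of \cite{ACGH}, where this exact identity $x^{d-m}\theta^m$ is computed, and simply note that our $x^k\theta^{d-k}$ is the $k = d-m$ instance; I would present the short derivation above as the proof and reference \cite{ACGH} for the Poincaré formula itself.
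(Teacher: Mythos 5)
Your proposal is correct: the paper gives no proof of this lemma, simply asserting it as a consequence of the Poincar\'{e} formula on p.~25 of \cite{ACGH}, and your derivation --- realizing $x^{k}$ by the embedded copy of $C_{d-k}$, pushing forward to $W_{d-k} \subset J(C)$ via the Abel map (birational onto its image for $d-k\leq g$), and applying $[W_{d-k}]=\theta_{0}^{g-d+k}/(g-d+k)!$ together with $\theta_{0}^{g}=g!$ --- is exactly the standard argument underlying that citation. The points you flag as delicate (transversality of the $X_{p_{i}}$ for general $p_{i}$, and compatibility of the Abel maps up to translation) are handled correctly and are indeed the only places requiring care.
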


\subsection{Subordinate Loci}
We now describe one of the most important constructions in this paper.  Let  $d \geq 2$ be an integer, let $(\mathcal{L},V)$ be a linear series of degree $n$ and dimension $r$ on $C,$ and assume that $n \geq d \geq r.$  Recall from \cite{ACGH} that there is a natural rank-$d$ vector bundle $E_{\mathcal{L}}$ on $C_{d}$ whose fibre over $D \in C_{d}$ is $H^{0}(D,\mathcal{L}|_{D}).$  As such, there is a morphism $$\alpha_{V}:V \otimes {\mathcal{O}}_{C_{d}} \rightarrow E_{\mathcal{L}}$$ whose fibre over each $D \in C_{d}$ is the restriction map $V \rightarrow H^{0}(D,\mathcal{L}|_{D}).$  The latter fails to be injective precisely when $D$ is subordinate to $(\mathcal{L},V),$ i.e. when $V \cap H^{0}(\mathcal{L}(-D)) \neq 0.$  

\medskip

\textbf{Definition:}  The cycle ${\Gamma}_{d}(\mathcal{L},V)$ is the degeneracy locus of $\alpha_{V}.$  If $V=H^{0}(\mathcal{L})$, we will write ${\Gamma}_{d}(\mathcal{L})$ instead.

\medskip

Note that $\Gamma_{d}(\mathcal{L},V)$ is supported on the set $$\{D \in C_{d}:V \cap H^{0}(\mathcal{L}(-D)) \neq 0\}.$$  

The following result computes the fundamental class of $\Gamma_{d}(\mathcal{L},V);$ we refer to p.342 of \cite{ACGH} for the proof.
\begin{lem}
\label{subordinate} (3.2 on p. 342 of \cite{ACGH}) Let $C$ be a curve of genus $g,$ and let $n,d,$ and $r$ be integers satisfying $n \geq d \geq r.$  Then ${\Gamma}_{d}(\mathcal{L},V)$ is $r-$dimensional, and its fundamental class is $$\displaystyle\sum_{k=0}^{d-r}\binom{n-g-r}{k}\frac{x^{k}\theta^{d-r-k}}{(d-r-k)!}.$$ 
\end{lem}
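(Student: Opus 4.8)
The plan is to realize $\Gamma_{d}(\mathcal{L},V)$ as the degeneracy locus of $\alpha_{V}$ and invoke the Giambelli--Thom--Porteous formula, after checking that the locus has the expected dimension. By construction $\Gamma_{d}(\mathcal{L},V)$ is the locus where the restriction map $\alpha_{V}\colon V\otimes\mathcal{O}_{C_{d}}\to E_{\mathcal{L}}$ fails to be injective, i.e.\ has rank $\leq r$. Since the source has rank $r+1$ and $E_{\mathcal{L}}$ has rank $d$, the expected codimension of such a locus is $(r+1-r)(d-r)=d-r$, so the expected dimension is $r$.

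To see that the dimension is attained, I would bound $\dim\Gamma_{d}(\mathcal{L},V)$ from above using the incidence variety $I=\{(D,[s])\in C_{d}\times\mathbb{P}(V):D\leq\mathrm{div}(s)\}$. Its projection to $\mathbb{P}(V)\cong\mathbb{P}^{r}$ has finite fibres over the dense locus of sections with reduced divisor (a reduced degree-$n$ divisor has only finitely many degree-$d$ sub-divisors), so every component of $I$ has dimension $\leq r$; as $I$ surjects onto $\Gamma_{d}(\mathcal{L},V)$, this gives $\dim\Gamma_{d}(\mathcal{L},V)\leq r$. Conversely, the general bound on the codimension of a determinantal locus (via the Eagon--Northcott complex) shows every component of $\Gamma_{d}(\mathcal{L},V)$ has codimension $\leq d-r$. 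Hence $\Gamma_{d}(\mathcal{L},V)$ is pure of dimension $r$. For generic reducedness I would observe that the fibre of $I\to\Gamma_{d}(\mathcal{L},V)$ over $D$ is $\mathbb{P}\big(V\cap H^{0}(\mathcal{L}(-D))\big)$, of dimension one less than the corank of $(\alpha_{V})_{D}$; if $\alpha_{V}$ had corank $\geq 2$ along a component, the preimage of that component in $I$ would have dimension $\geq r+1$, contradicting $\dim I\leq r$. So $\alpha_{V}$ has corank exactly $1$ at a general point of each component, where the determinantal scheme structure is smooth. The Thom--Porteous formula therefore computes the fundamental class of $\Gamma_{d}(\mathcal{L},V)$; since $\dim V-r=1$ the relevant determinant is $1\times1$, and because $V\otimes\mathcal{O}_{C_{d}}$ is trivial the class is simply $c_{d-r}(E_{\mathcal{L}})$.

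It then remains to express $c_{d-r}(E_{\mathcal{L}})$ in terms of $x$ and $\theta$. For this I would use the universal divisor $\mathcal{D}\subset C_{d}\times C$ with projections $p_{1},p_{2}$: restricting $0\to\mathcal{O}(-\mathcal{D})\to\mathcal{O}\to\mathcal{O}_{\mathcal{D}}\to0$, twisting by $p_{2}^{*}\mathcal{L}$ and pushing forward by $p_{1}$ gives, in $K$-theory, $[E_{\mathcal{L}}]=\chi(\mathcal{L})-p_{1!}\big(p_{2}^{*}\mathcal{L}(-\mathcal{D})\big)$. Applying Grothendieck--Riemann--Roch, together with the Poincar\'{e} formula for the class $[\mathcal{D}]$ and the standard relations it satisfies on $C_{d}\times C$ (these, and $c(E_{\mathcal{L}})$ itself, are worked out in \cite{ACGH}), one obtains $$\mathrm{ch}(E_{\mathcal{L}})=(n+1-g)(1-e^{-x})+(d+\theta)e^{-x},$$ and converting Chern character into Chern class yields $$c_{m}(E_{\mathcal{L}})=\sum_{k=0}^{m}\binom{n-g-d+m}{k}\frac{x^{k}\theta^{m-k}}{(m-k)!}.$$ Setting $m=d-r$ and using $n-g-d+m=n-g-r$ gives exactly the asserted class. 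The main obstacle is this last step: bookkeeping the cohomology of $C_{d}\times C$ and the class of $\mathcal{D}$, applying Grothendieck--Riemann--Roch correctly, and passing from $\mathrm{ch}(E_{\mathcal{L}})$ to $c(E_{\mathcal{L}})$; the reduction to this computation is purely formal.
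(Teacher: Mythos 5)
The paper offers no proof of this lemma beyond the citation to p.~342 of \cite{ACGH}, and your argument is essentially the proof given there: realize $\Gamma_{d}(\mathcal{L},V)$ as the degeneracy locus of $\alpha_{V}$, check via the incidence correspondence and the determinantal codimension bound that it is pure of the expected dimension $r$ with generic corank one, apply Thom--Porteous to reduce to $c_{d-r}(E_{\mathcal{L}})$, and evaluate that Chern class by Grothendieck--Riemann--Roch. The computation is correct (the resulting $c_{m}(E_{\mathcal{L}})$ matches the stated class with $m=d-r$), so this is the same approach as the cited source.
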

\subsection{Diagonal Calculations}  We collect here two special cases of the computation of diagonal classes in Proposition 5.1 on p.358 of \cite{ACGH} which are used in the proof of Theorem A.  First, we define the diagonal loci.

\medskip

\textbf{Definition:} Let $d \geq 2$ be an integer and let $a_{1}, \dots ,a_{k}$ be a sequence of positive integers which is a partition of $d.$  Then $\Delta_{a_{1}, \dots ,a_{k}}$ is the reduced subscheme of $C_{d}$ supported on the image of the morphism $$\phi_{a_{1}, \dots ,a_{k}}:C^{k} \rightarrow C_{d} \hspace{0.3cm} (p_{1}, \dots ,p_{k}) \mapsto \sum_{i=0}^{k}a_{i}p_{i}$$  $\Delta_{2,1, \dots 1}$ will be denoted by $\Delta.$ 

\medskip
  
\begin{prop}
\label{small_diag}
Let $C$ be a curve of genus $g.$  The fundamental class of ${\Delta}_{d}$ in $C_{d}$ is $$dx^{d-2} \cdot \Bigl(((d-1)g+1)x-(d-1)\theta\Bigr) $$
\end{prop}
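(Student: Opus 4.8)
\medskip
\noindent\textbf{Proof proposal.}
The cycle $\Delta_{d}$ in the statement is the small diagonal $\{\,dp:p\in C\,\}\subset C_{d}$, i.e.\ the reduced image of the closed embedding $\iota\colon C\hookrightarrow C_{d}$, $p\mapsto dp$; it is a smooth curve isomorphic to $C$, of codimension $d-1$, and $[\Delta_{d}]=\iota_{*}[C]$. The plan is to locate this class in a two-dimensional subspace of $N^{d-1}_{\mathbb{Q}}(C_{d})$ and then to pin it down by computing two intersection numbers.

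First, since $x$ is ample, the Hard Lefschetz theorem makes cup product with $x^{d-2}$ an isomorphism $N^{1}_{\mathbb{Q}}(C_{d})\xrightarrow{\sim}N^{d-1}_{\mathbb{Q}}(C_{d})$ (it is injective on $H^{2}$, and its inverse preserves Hodge type and hence the subspace of rational $(1,1)$-classes). So $[\Delta_{d}]=x^{d-2}\cdot\beta$ for a unique $\beta\in N^{1}_{\mathbb{Q}}(C_{d})$, and it is enough to show $\beta\in\langle x,\theta\rangle$. For $C$ general this is automatic, since $N^{1}_{\mathbb{R}}(C_{d})=\langle x,\theta\rangle$; for an arbitrary curve one can either specialize from the general case (the two coefficients being fixed by deformation-invariant intersection numbers) or invoke the computation in \cite{ACGH}, which one can organize inductively via the addition maps $C_{a}\times C_{b}\to C_{a+b}$, under which the pullbacks of $x$ and $\theta$ are controlled. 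Accordingly, write $[\Delta_{d}]=\lambda x^{d-1}+\mu x^{d-2}\theta$.

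Next I would compute $[\Delta_{d}]\cdot x$ with $x=[i_{p}(C_{d-1})]$, working in local coordinates near the point $dp\in C_{d}$. If $t$ is a local parameter at $p$ and $e_{1},\dots,e_{d}$ are the elementary symmetric functions of the $d$ nearby points (a coordinate system on $C_{d}$ near $dp$), then $i_{p}(C_{d-1})$ is cut out, reduced, by $e_{d}=0$, while $\Delta_{d}$ is the smooth arc $\{\,e_{k}=\binom{d}{k}s^{k}\,\}$ parametrized by $s$. Hence $e_{d}$ restricts to $s^{d}$ along $\Delta_{d}$, so the two loci meet only at $dp$ and with multiplicity $d$; since $\Delta_{d}\cap i_{p}(C_{d-1})=\{dp\}$ globally, this gives $[\Delta_{d}]\cdot x=d$. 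For $[\Delta_{d}]\cdot\theta$, I would observe that $a_{d}\circ\iota\colon p\mapsto\mathcal{O}_{C}(dp)$ becomes, after a translation identifying $\mathrm{Pic}^{d}(C)$ with $J(C)$, the composite of the multiplication-by-$d$ isogeny $[d]$ with an Abel--Jacobi map $u\colon C\to J(C)$; since $[d]^{*}\theta\equiv d^{2}\theta$ in $NS(J(C))$ and $\deg_{C}(u^{*}\theta)=g$ (Poincar\'e's formula), we get $[\Delta_{d}]\cdot\theta=\deg_{C}(\iota^{*}\theta)=d^{2}g$.

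Finally, Lemma \ref{intersect} gives $x^{d}=1$, $x^{d-1}\theta=g$, and $x^{d-2}\theta^{2}=g(g-1)$, so the two computed numbers read $\lambda+\mu g=d$ and $\lambda+\mu(g-1)=d^{2}$; subtracting yields $\mu=-d(d-1)$ and then $\lambda=d\bigl((d-1)g+1\bigr)$, which is precisely the asserted class $d\,x^{d-2}\bigl(((d-1)g+1)x-(d-1)\theta\bigr)$. The one genuinely delicate step is the assertion that $\beta\in\langle x,\theta\rangle$ for an arbitrary curve; the rest is a short coordinate calculation together with the standard behavior of $[d]$ on a principally polarized abelian variety. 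As a sanity check one should treat $d=2$ directly, where $\Delta_{d}$ is already a divisor, the Hard Lefschetz step is vacuous, and the formula recovers $2\bigl((g+1)x-\theta\bigr)$.
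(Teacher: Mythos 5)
Your proposal is correct, but it takes a genuinely different route from the paper. The paper's proof is a citation plus a combinatorial simplification: it quotes the general diagonal-class formula of Proposition 5.1 on p.358 of \cite{ACGH}, which expresses $[\Delta_{d}]$ as a double sum in $x$ and $\theta$, and then collapses that sum using the identity $\sum_{1\le\beta\le\alpha}(-1)^{\beta}\beta\binom{\alpha}{\beta}=0$ for $\alpha\ge 2$. You instead determine the class by test classes: you argue via Hard Lefschetz and Lefschetz $(1,1)$ that $[\Delta_{d}]=x^{d-2}\beta$ for a divisor class $\beta$, locate $\beta$ in $\langle x,\theta\rangle$ using the rank-two statement for a general curve (Corollary \ref{n_s_rank}), and then pin down the two coefficients from $[\Delta_{d}]\cdot x=d$ and $[\Delta_{d}]\cdot\theta=d^{2}g$, both of which you compute correctly; the resulting linear system and its solution are also correct, and your $d=2$ sanity check matches the half-diagonal class. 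Your approach is more geometric and makes the coefficients transparent, at the cost of requiring structural input about $N^{1}$; the paper's is shorter and works uniformly for every curve at the cost of opacity. Since the Proposition is stated for an arbitrary $C$, your specialization step is genuinely needed, and the parenthetical reason you give for it is not quite right: pairing against $x$ and $\theta$ alone cannot determine a class in $H^{2d-2}(C_{d},\mathbb{Q})$, which is much larger than the span of $x^{d-1}$ and $x^{d-2}\theta$. What one should say is that $[\Delta_{d}]$, $x^{d-1}$ and $x^{d-2}\theta$ are classes of cycles defined in families, hence flat sections of the relevant local system over the connected moduli space $\mathcal{M}_{g}$, so the linear relation established at a very general point propagates to every smooth curve. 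With that repair (or with your fallback of simply invoking the computation in \cite{ACGH}, which is what the paper does anyway), the argument is complete.
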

\begin{proof}
By Proposition 5.1 on p.358 of \cite{ACGH}, this class is $${\sum}_{0 \leq \beta \leq \alpha \leq d-1}\frac{(-1)^{\alpha + \beta}}{{\beta}!(\alpha - \beta)!}\Bigl(d(\beta + 1 - g)+d^{2}(g - \beta)\Bigr)x^{d-1-{\alpha}}{\theta}^{\alpha}.$$  The result is thus immediate when $d=2;$ when $d \geq 3,$ it follows from the fact that ${\sum}_{1 \leq \beta \leq \alpha}(-1)^{\beta}{\beta}\binom{\alpha}{\beta}=0$ for all $\alpha\geq 2.$
\end{proof}
\begin{prop}
\label{other_diag}
The numerical class of $\Delta_{g-d+1,d}$ is $$(1-\frac{1}{2}\delta_{(d,\frac{g+1}{2})}) \cdot d(g-d+1)x^{g-3} \cdot \biggl\{\Bigl(d(g-d+1)(g^{2}-g)-(g^{2}+1)(g-2)\Bigr)x^{2}$$ $$+\Bigl((2-2d)g^{2}+(2d^{2}-3)g-(2d^{2}-d-2)\Bigr)x\theta+(d-1)(g-d)\theta^{2}\biggr\}$$ where $\delta_{(d,\frac{g+1}{2})}$ is the Kronecker delta.
\end{prop}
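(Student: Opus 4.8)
The plan is to mimic the proof of Proposition~\ref{small_diag}: specialize the general diagonal--class computation of Proposition 5.1 on p.~358 of \cite{ACGH} to the two-part partition $(g-d+1,\,d)$ of $g+1$, and then collapse the resulting sum using the elementary identities $\sum_{\beta}(-1)^{\beta}\binom{\alpha}{\beta}\beta^{j}=0$, valid for $0\le j<\alpha$. First I would dispose of the normalization that produces the Kronecker delta. By definition $\Delta_{g-d+1,d}$ is the reduced image of $\phi:=\phi_{g-d+1,d}\colon C^{2}\to C_{g+1}$, $(p_{1},p_{2})\mapsto(g-d+1)p_{1}+d\,p_{2}$, whereas Proposition 5.1 of \cite{ACGH} computes the honest pushforward $\phi_{\ast}[C^{2}]$. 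A general point of the image of $\phi$ has the form $(g-d+1)p_{1}+d\,p_{2}$ with $p_{1}\ne p_{2}$, and its $\phi$-fibre is a single point unless $g-d+1=d$, in which case $(p_{2},p_{1})$ is a second point of the fibre; hence $\phi$ has generic degree $1+\delta_{(d,\frac{g+1}{2})}$ onto its image, so that $[\Delta_{g-d+1,d}]=\bigl(1+\delta_{(d,\frac{g+1}{2})}\bigr)^{-1}\phi_{\ast}[C^{2}]=\bigl(1-\tfrac12\delta_{(d,\frac{g+1}{2})}\bigr)\phi_{\ast}[C^{2}]$, and it remains to compute $\phi_{\ast}[C^{2}]$.

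Inserting the partition $(g-d+1,d)$ into the \cite{ACGH} formula and rewriting it in the shape that appears in the proof of Proposition~\ref{small_diag}, one gets $\phi_{\ast}[C^{2}]=\sum_{0\le\beta\le\alpha\le g-1}\frac{(-1)^{\alpha+\beta}}{\beta!\,(\alpha-\beta)!}\,P(\beta)\,x^{\,g-1-\alpha}\theta^{\alpha}$, where the coefficient $P(\beta)=P_{g,d}(\beta)$ is a polynomial of degree $2$ in $\beta$ with coefficients in $\mathbb{Z}[g,d]$; the degree is $2$ rather than $1$ precisely because the partition now has two nontrivial multiplicities instead of one. Since $P$ is quadratic, the identities above annihilate every term with $\alpha\ge 3$, so only $\alpha\in\{0,1,2\}$ contribute: $\phi_{\ast}[C^{2}]$ is a combination of the three monomials $x^{g-1}$, $x^{g-2}\theta$, $x^{g-3}\theta^{2}$, whose coefficients are the successive finite differences $P(0)$, $P(1)-P(0)$, and $\tfrac12\bigl(P(0)-2P(1)+P(2)\bigr)$. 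One then substitutes the explicit $P$, factors out the common $d(g-d+1)\,x^{g-3}$, checks the three residual polynomials in $g$ and $d$ against the bracketed expression in the statement, and multiplies back through by $1-\tfrac12\delta_{(d,\frac{g+1}{2})}$.

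The laborious step is precisely this last one: transcribing the \cite{ACGH} formula correctly for a two-part (rather than one-part) partition, writing down $P(\beta)$, and carrying out the simplification of the three finite differences into the stated polynomial coefficients. A convenient running consistency check is the tautology $\Delta_{g-d+1,d}=\Delta_{d,g-d+1}$, which forces each of the three residual coefficients, once the symmetric prefactor $d(g-d+1)\,x^{g-3}$ has been removed, to be invariant under the substitution $d\mapsto g+1-d$.
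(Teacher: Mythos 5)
Your proposal is correct and follows essentially the same route as the paper: the paper likewise accounts for the factor $1-\tfrac{1}{2}\delta_{(d,\frac{g+1}{2})}$ by noting that $\phi_{g-d+1,d}$ is injective unless $d=\frac{g+1}{2}$ (in which case it factors through the degree-two quotient $C\times C\to C_{2}$), and then extracts the coefficient of $t_{1}t_{2}$ from Proposition 5.1 of \cite{ACGH}, obtaining a quadratic polynomial in $\beta$ whose alternating binomial sums annihilate all terms with $\alpha\geq 3$, exactly as you describe. One caveat: your symmetry check under $d\mapsto g+1-d$ is a good one, and it actually flags a misprint in the statement as given --- the constant term of the $x\theta$ coefficient should be $-(2d^{2}-2d-1)$ (which is symmetric, and is what the final display of the paper's own proof produces) rather than $-(2d^{2}-d-2)$ --- so do not be alarmed when your computed middle coefficient disagrees with the stated one by $d-1$.
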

\begin{proof}
As a set, $\Delta_{g-d+1,d}$ is the image of the morphism $$\phi_{g-d+1,d}:C \times C \rightarrow C_{g+1}, (p,q) \mapsto (g-d+1)p+dq.$$  When $d \neq \frac{g+1}{2},$ this morphism is injective, so that the class of $\Delta_{g-d+1,d}$ is the pushforward class $[\phi_{g-d+1,d}]_{\ast}(C \times C).$  When $d=\frac{g+1}{2},$ we have the factorization
$$\xymatrix{
C \times C
\ar[rr]^{\phi_{\frac{g+1}{2},\frac{g+1}{2}}}
\ar[dr]_{\pi}
&& C_{g+1}\\
& C_{2} 
\ar[ur]_{\phi_{\frac{g+1}{2}}}}\\$$ where $\pi:C \times C \rightarrow C_{2}$ is the canonical quotient map and $\phi_{\frac{g+1}{2}}:C_{2} \rightarrow C_{g+1}$ is defined by $p+q \mapsto (\frac{g+1}{2})p+(\frac{g+1}{2})q.$  Since $\phi_{\frac{g+1}{2}}$ is injective, the class of $\Delta_{\frac{g+1}{2},\frac{g+1}{2}}$ is the pushforward class $[\phi_{\frac{g+1}{2}}]_{\ast}(C_{2}),$ which by our factorization is equal to $\frac{1}{2} \cdot [\phi_{\frac{g+1}{2},\frac{g+1}{2}}]_{\ast}(C \times C).$  Therefore the class of $\Delta_{g-d+1,d}$ is equal to $$(1-\frac{1}{2}\delta_{(d,\frac{g+1}{2})}) \cdot [\phi_{g-d+1,d}]_{\ast}(C \times C).$$
  
By Proposition 5.1 on p.358 of \cite{ACGH}, the class $[\phi_{g-d+1,d}]_{\ast}(C \times C)$ is the coefficient of $t_{1}t_{2}$ in the expression  $$\displaystyle\sum_{0 \leq \beta \leq \alpha \leq g-1}\frac{(-1)^{\alpha+\beta}}{\beta!(\alpha-\beta)!}\Bigl(1+(g-d+1)t_{1}+dt_{2}\Bigr)^{2-g+\beta}\Bigl(1+(g-d+1)^{2}t_{1}+d^{2}t_{2}\Bigr)^{g-\beta}x^{g-1-\alpha}\theta^{\alpha}.$$  This coefficient is equal to $$d(g-d+1)\displaystyle\sum_{\alpha=0}^{g-1}\frac{(-1)^{\alpha}}{\alpha!}\biggl\{\Bigl((d-1)g-d(d-1)\Bigr)\displaystyle\sum_{\beta=0}^{\alpha}(-1)^{\beta}\binom{\alpha}{\beta}\beta^{2}$$ $$+\Bigl((2-2d)g^{2}+(2d^{2}-d-2)g-(d^{2}-d-1)\Bigr)\displaystyle\sum_{\beta=0}^{\alpha}(-1)^{\beta}\binom{\alpha}{\beta}\beta$$ $$+\Bigl((d-1)g^{3}-(d^{2}-2)g^{2}+(d^{2}-d-1)g+2\Bigr)\displaystyle\sum_{\beta=0}^{\alpha}(-1)^{\beta}\binom{\alpha}{\beta}\biggr\}x^{g-1-\alpha}\theta^{\alpha}$$  Since the three sums over $\beta$ are equal to 0 for all $\alpha \geq 3,$ we finally obtain that the class $[\phi_{g-d+1,d}]_{\ast}(C \times C)$ is equal to $$d(g-d+1)x^{g-3}\biggl\{\Bigl((d-1)g^{3}-(d^{2}-2)g^{2}+(d^{2}-d-1)g+2\Bigr)x^{2}$$ $$+\Bigl((2-2d)g^{2}+(2d^{2}-3)g-(2d^{2}-2d-1)\Bigr)x\theta+\Bigl((d-1)g-d(d-1)\Bigr)\theta^{2}\biggr\}.$$
\end{proof}
\section{Results from the asymptotic theory of linear series}
\label{asymp_thy}
We collect the results on stable base loci and the volume function that will be used in the sequel.  We refer to \cite{ELMNP} and \cite{Laz1} for a thorough treatment.    

\medskip

\subsection{Base Loci}
\label{baseloc}
Recall the following definition from the Introduction.

\medskip

\textbf{Definition:} Let $X$ be an irreducible projective variety and let $D$ be a $\mathbb{Q}-$Cartier divisor on $X.$  Then the \textbf{stable base locus of} $D$ is the algebraic set $$\textbf{B}(D)=\bigcap_{m}\textnormal{Bs}|mD|$$ where the intersection is taken over all positive integers $m$ for which $mD$ is Cartier.  

\medskip

For the proof of Theorem \ref{stab_classes} we will need to know that the stable base locus of a Cartier divisor $D$ can be realized as the base locus of some multiple of $D.$  The relevant result, which we state below, follows immediately from Proposition 2.1.21 in \cite{Laz1}.

\begin{prop}
\label{multiple}
Let $D$ be a Cartier divisor on an irreducible projective variety $X.$
\begin{itemize}
\item[(i)]{There exists a positive integer $m_{0}$ such that $\textnormal{\textbf{B}}(D)=\textnormal{Bs}|km_{0}D|$ for all $k >> 0.$}
\item[(ii)]{$\textnormal{\textbf{B}}(D)=\textnormal{\textbf{B}}(mD)$ for all $m \geq 1.$}
\end{itemize} 
In particular, we can always find a positive integer $m$ for which $\textnormal{\textbf{B}}(mD)=\textnormal{Bs}|mD|.$ \hfill \qedsymbol
\end{prop}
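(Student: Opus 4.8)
The plan is to deduce both statements from two elementary facts: a submultiplicativity property of base loci, and the descending chain condition on closed subsets of the Noetherian variety $X$. First I would record the submultiplicativity: for positive integers $m$ and $n$ one has $\textnormal{Bs}|mnD| \subseteq \textnormal{Bs}|mD|$. Indeed, if $|mD|$ is empty this is trivial (with the convention $\textnormal{Bs}|\emptyset|=X$), and otherwise, given a nonzero section $s \in H^{0}(X,\mathcal{O}_{X}(mD))$, its $n$-th power lies in $H^{0}(X,\mathcal{O}_{X}(mnD))$ and has the same zero locus as $s$; intersecting the zero loci over all such $s$ then gives the inclusion. (Here we use that $D$, hence every $mD$, is Cartier, so the intersection defining $\textbf{B}(D)$ runs over all $m \geq 1$.)

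For (i), I would apply Noetherianity to the family of closed sets $\{\textnormal{Bs}|mD|\}_{m \geq 1}$, whose intersection is $\textbf{B}(D)$ by definition: finitely many members already realize this intersection, say $\bigcap_{i=1}^{r}\textnormal{Bs}|m_{i}D| = \textbf{B}(D)$. Put $m_{0}=m_{1}\cdots m_{r}$. Submultiplicativity gives $\textnormal{Bs}|m_{0}D| \subseteq \textnormal{Bs}|m_{i}D|$ for each $i$, hence $\textnormal{Bs}|m_{0}D| \subseteq \textbf{B}(D)$; the reverse inclusion is immediate, so $\textnormal{Bs}|m_{0}D|=\textbf{B}(D)$. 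Applying submultiplicativity once more, for every $k \geq 1$ we get $\textbf{B}(D) \subseteq \textnormal{Bs}|km_{0}D| \subseteq \textnormal{Bs}|m_{0}D| = \textbf{B}(D)$, which is the asserted equality (a fortiori for $k \gg 0$).

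For (ii), the inclusion $\textbf{B}(D) \subseteq \textbf{B}(mD)$ is automatic, since $\textbf{B}(mD)=\bigcap_{k \geq 1}\textnormal{Bs}|kmD|$ is an intersection over a subfamily of the one defining $\textbf{B}(D)$. For the reverse inclusion, fix $n \geq 1$; then $\textbf{B}(mD) \subseteq \textnormal{Bs}|nmD| \subseteq \textnormal{Bs}|nD|$ by submultiplicativity, and intersecting over all $n$ yields $\textbf{B}(mD) \subseteq \textbf{B}(D)$. Finally, taking $m=m_{0}$ in (ii) and combining with (i) gives $\textbf{B}(m_{0}D)=\textbf{B}(D)=\textnormal{Bs}|m_{0}D|$, which is the last assertion.

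Since the argument is purely formal there is no substantive obstacle; the only points demanding attention are the convention for the base locus of an empty linear system (so that the submultiplicativity inclusion holds unconditionally) and the appeal to Noetherianity, which is precisely what collapses the a priori infinite intersection into a single multiple. Alternatively — as the paper does — one simply invokes Proposition 2.1.21 of \cite{Laz1}, from which (i) and (ii) follow at once.
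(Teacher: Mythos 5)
Your proof is correct, and it is in substance the same argument as the one the paper relies on: the paper gives no proof of this proposition, merely citing Proposition 2.1.21 of \cite{Laz1}, and your submultiplicativity-plus-Noetherianity argument is precisely the standard proof of that cited result. The two points you flag as needing care (the convention $\textnormal{Bs}|\emptyset|=X$ and the reduction of the infinite intersection to a finite one via the descending chain condition) are indeed the only delicate steps, and you handle both correctly; in fact your argument for (i) yields the equality $\textbf{B}(D)=\textnormal{Bs}|km_{0}D|$ for \emph{all} $k\geq 1$, slightly more than the stated $k\gg 0$.
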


As mentioned in the introduction, the stable base locus of a $\mathbb{Q}-$Cartier divisor is not a numerical invariant; this can be seen by considering any smooth projective $X$ with $H^{1}(X,\mathcal{O}_{X}) \neq 0$ and comparing the trivial line bundle on $X$ to a non-torsion line bundle of degree 0 on $X.$  We now introduce the "outer and inner approximations" of the stable base locus.

\medskip

\textbf{Definition:} Let $X$ be an irreducible projective variety and let $D$ be an $\mathbb{R}-$Cartier divisor on $X.$
\begin{itemize}
\item[(i)]{The \textbf{augmented base locus} of $D$ is $$\textbf{B}_{+}(D)=\bigcap_{A}\textbf{B}(D-A)$$ where the intersection is taken over all ample $\mathbb{R}-$Cartier divisors $A$ for which $D-A$ is $\mathbb{Q}-$Cartier.}
\item[(ii)]{The \textbf{restricted base locus} of $D$ is $$\textbf{B}_{-}(D)=\bigcup_{A}\textbf{B}(D+A)$$ where the union is taken over all ample $\mathbb{R}-$Cartier divisors $A$ for which $D+A$ is $\mathbb{Q}-$Cartier.}
\end{itemize}

\bigskip

While $\textbf{B}_{+}(D)$ is known to be Zariski-closed, we only know at this point that $\textbf{B}_{-}(D)$ is at worst a countable union of subvarieties of $X$ (Proposition 1.19 in \cite{ELMNP}).

The following result is a straightforward consequence of the definitions.
\begin{lem} 
\label{basicloc}
For all $\mathbb{R}-$Cartier divisors $D$ on $X,$ the following statements hold:
\begin{itemize}
\item[(i)]{$\textnormal{\textbf{B}}_{-}(D)$ and $\textnormal{\textbf{B}}_{+}(D)$ are numerical invariants of $D,$ so that they are both well-defined for any class $D \in N^{1}_{\mathbb{R}}(X).$}
\item[(ii)]{If $D$ is a $\mathbb{Q}-$Cartier divisor, then $\textnormal{\textbf{B}}_{-}(D) \subseteq \textnormal{\textbf{B}}(D) \subseteq \textnormal{\textbf{B}}_{+}(D).$ \hfill \qedsymbol} 
\end{itemize} 
\end{lem}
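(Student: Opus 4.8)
The plan is to verify both statements directly from the definitions; the only external input is that ampleness of an $\mathbb{R}-$Cartier divisor is a numerical condition (the ample cone is open in $N^{1}_{\mathbb{R}}(X)$), so that adding a numerically trivial $\mathbb{R}-$Cartier divisor to an ample one produces an $\mathbb{R}-$Cartier divisor that is still ample.

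For (i), suppose $D \equiv D'$ and set $N = D - D'$, a numerically trivial $\mathbb{R}-$Cartier divisor. Given an ample $\mathbb{R}-$Cartier divisor $A$ with $D+A$ being $\mathbb{Q}-$Cartier — i.e.\ a term $\textbf{B}(D+A)$ in the union defining $\textbf{B}_{-}(D)$ — I would rewrite $D + A = D' + (A+N)$. Here $A+N$ has the same numerical class as $A$, hence is again ample $\mathbb{R}-$Cartier, and $D' + (A+N) = D+A$ is $\mathbb{Q}-$Cartier, so $\textbf{B}(D+A)$ already occurs among the terms defining $\textbf{B}_{-}(D')$; thus $\textbf{B}_{-}(D) \subseteq \textbf{B}_{-}(D')$, and by symmetry the two coincide. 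Replacing $A$ by $-A$ gives the same argument for $\textbf{B}_{+}$: a term $\textbf{B}(D-A)$ of $\textbf{B}_{+}(D)$ equals $\textbf{B}\bigl(D'-(A-N)\bigr)$ with $A-N$ ample and $D'-(A-N)=D-A$ being $\mathbb{Q}-$Cartier, so it occurs in the intersection defining $\textbf{B}_{+}(D')$, forcing $\textbf{B}_{+}(D') \subseteq \textbf{B}_{+}(D)$ and hence equality. This yields (i) and the well-definedness of $\textbf{B}_{\pm}$ on $N^{1}_{\mathbb{R}}(X)$.

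For (ii), assume $D$ is $\mathbb{Q}-$Cartier. The first observation is that every auxiliary divisor $A$ appearing in the definition of $\textbf{B}_{-}(D)$ (resp.\ $\textbf{B}_{+}(D)$) is automatically $\mathbb{Q}-$Cartier, since it equals $(D+A)-D$ (resp.\ $D-(D-A)$), a difference of $\mathbb{Q}-$Cartier divisors. So it suffices to prove the claim: for any $\mathbb{Q}-$Cartier divisor $E$ and any ample $\mathbb{Q}-$Cartier divisor $A$, one has $\textbf{B}(E+A) \subseteq \textbf{B}(E)$. Granting this, $E=D$ gives $\textbf{B}_{-}(D) = \bigcup_{A}\textbf{B}(D+A) \subseteq \textbf{B}(D)$, while $E = D-A$ gives $\textbf{B}(D) = \textbf{B}\bigl((D-A)+A\bigr) \subseteq \textbf{B}(D-A)$ for every such $A$, and intersecting over $A$ yields $\textbf{B}(D) \subseteq \textbf{B}_{+}(D)$. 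To prove the claim, take $x \notin \textbf{B}(E)$, choose $m \geq 1$ so that $mE$ and $mA$ are both Cartier with $x \notin \textnormal{Bs}|mE|$, and pick $s \in H^{0}(\mathcal{O}_{X}(mE))$ nonvanishing at $x$; for $k \gg 0$ the Cartier divisor $kmA$ is globally generated, so there is $t \in H^{0}(\mathcal{O}_{X}(kmA))$ nonvanishing at $x$, and then $s^{k}t$ is a section of $\mathcal{O}_{X}\bigl(km(E+A)\bigr)$ not vanishing at $x$, so $x \notin \textnormal{Bs}|km(E+A)| \supseteq \textbf{B}(E+A)$.

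I do not anticipate a genuine obstacle — the statement is, as advertised, essentially bookkeeping — but the two points that require care are (a) invoking that ampleness of $\mathbb{R}-$Cartier divisors is a numerical property, which is what licenses the reindexing of the unions and intersections in part (i), and (b) keeping straight which auxiliary divisors are $\mathbb{Q}-$Cartier versus merely $\mathbb{R}-$Cartier once one restricts to a $\mathbb{Q}-$Cartier $D$ in part (ii).
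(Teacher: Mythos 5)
Your proof is correct and is precisely the routine verification the paper has in mind: the paper states this lemma as ``a straightforward consequence of the definitions'' and writes out no proof at all, so there is no divergence of method to report. The two points you flag --- that ampleness of $\mathbb{R}$-Cartier divisors is a numerical property (licensing the reindexing in (i)), and that the auxiliary divisors in (ii) are automatically $\mathbb{Q}$-Cartier so that the global-generation argument for $\textbf{B}(E+A)\subseteq\textbf{B}(E)$ applies --- are exactly the right ones, and both are handled correctly.
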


Ampleness, nefness, and bigness can all be characterized in terms of augmented and restricted base loci: 
\begin{lem} 
\label{char_cones}
For all $D \in N^{1}_{\mathbb{R}}(X),$ the following hold:
\begin{itemize}
\item[(i)] $D$ is ample if and only if $\textnormal{\textbf{B}}_{+}(D)=\emptyset.$
\item[(ii)] $D$ is nef if and only if $\textnormal{\textbf{B}}_{-}(D)=\emptyset.$
\item[(iii)] $D$ is big if and only if $\textnormal{\textbf{B}}_{+}(D) \neq X.$
\item[(iv)] $D$ is pseudoeffective if and only if $\textnormal{\textbf{B}}_{-}(D) \neq X.$ \hfill \qedsymbol
\end{itemize}
\end{lem}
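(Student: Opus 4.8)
The plan is to deduce all four equivalences from a short list of elementary facts together with the standard openness/closedness properties of the ample, nef, big and pseudoeffective cones; indeed these statements are part of the package collected in Section~1 of \cite{ELMNP}, and I would cite that reference while recording the argument. The facts I would isolate first are: (a) an ample $\mathbb{Q}$-divisor is semiample, hence has empty stable base locus; (b) for a $\mathbb{Q}$-Cartier divisor $E$ and an ample divisor $A$ one has the nesting $\mathbf{B}(E+A)\subseteq\mathbf{B}(E)\subseteq\mathbf{B}(E-A)$, each inclusion obtained by passing (via Proposition~\ref{multiple}) to a multiple on which a suitable multiple of $\pm A$ is globally generated and then multiplying its sections against sections of a multiple of $E$; and (c) Kodaira's lemma, that $D$ is big exactly when $D\equiv A+E$ with $A$ an ample $\mathbb{Q}$-divisor and $E$ an effective $\mathbb{Q}$-divisor, together with $\mathbf{B}(A+E)\subseteq\mathrm{Supp}(E)\subsetneq X$ for any such decomposition. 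For convenience I would fix once and for all an ample $\mathbb{Q}$-divisor $A_{0}$; using (b), the closed sets $\mathbf{B}(D-\tfrac1m A_{0})$ are then decreasing in $m$ and cofinal among all the $\mathbf{B}(D-A)$, so by the Noetherian property $\mathbf{B}_{+}(D)=\mathbf{B}(D-\tfrac1m A_{0})$ for $m\gg0$; dually the sets $\mathbf{B}(D+\tfrac1m A_{0})$ are increasing and cofinal, so $\mathbf{B}_{-}(D)=\bigcup_{m}\mathbf{B}(D+\tfrac1m A_{0})$.

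For (i): if $D$ is ample, openness of the ample cone produces an ample $\mathbb{Q}$-divisor $A$ with $D-A$ still ample, so $\mathbf{B}(D-A)=\emptyset$ by (a) and hence $\mathbf{B}_{+}(D)=\emptyset$; conversely $\mathbf{B}_{+}(D)=\emptyset$ gives $\mathbf{B}(D-\tfrac1m A_{0})=\emptyset$ for $m\gg0$, so $D-\tfrac1m A_{0}$ is semiample, in particular nef, and $D=(D-\tfrac1m A_{0})+\tfrac1m A_{0}$ is nef-plus-ample, hence ample. For (ii): if $D$ is nef then $D+A$ is ample for every ample $A$, so $\mathbf{B}(D+A)=\emptyset$ by (a) and $\mathbf{B}_{-}(D)=\emptyset$; conversely $\mathbf{B}_{-}(D)=\emptyset$ forces each $\mathbf{B}(D+\tfrac1m A_{0})=\emptyset$, so each $D+\tfrac1m A_{0}$ is nef, and letting $m\to\infty$ and using that the nef cone is closed gives $D$ nef.

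For (iii): if $D$ is big, write $D\equiv A+E$ as in (c); then, using Lemma~\ref{basicloc}(i) and the definition of $\mathbf{B}_{+}$, $\mathbf{B}_{+}(D)=\mathbf{B}_{+}(A+E)\subseteq\mathbf{B}(\tfrac12 A+E)\subseteq\mathrm{Supp}(E)\subsetneq X$. Conversely, if $\mathbf{B}_{+}(D)\neq X$ then $\mathbf{B}(D-\tfrac1m A_{0})\neq X$ for $m\gg0$, i.e.\ $|N(D-\tfrac1m A_{0})|\neq\emptyset$ for some $N$, so $D\equiv\tfrac1m A_{0}+\tfrac1N(\text{effective})$ is big by (c). For (iv): if $D$ is not pseudoeffective then, the pseudoeffective cone being closed, $D+\tfrac1m A_{0}$ is not pseudoeffective — hence not effective — for $m\gg0$, so $\mathbf{B}(D+\tfrac1m A_{0})=X$ and $\mathbf{B}_{-}(D)=X$. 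Conversely, if $D$ is pseudoeffective then $D+\tfrac1m A_{0}$ is big for every $m$ (pseudoeffective plus ample is big), so by (iii) and Lemma~\ref{basicloc}(ii) each $\mathbf{B}(D+\tfrac1m A_{0})\subseteq\mathbf{B}_{+}(D+\tfrac1m A_{0})$ is a proper Zariski-closed subset of $X$, and $\mathbf{B}_{-}(D)=\bigcup_{m}\mathbf{B}(D+\tfrac1m A_{0})$ is a countable union of such subsets.

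The only step with genuine content is this last one: unlike $\mathbf{B}_{+}$, the restricted base locus is a priori only a countable union of subvarieties and need not be closed (Proposition~1.19 in \cite{ELMNP}), so concluding $\mathbf{B}_{-}(D)\neq X$ from pseudoeffectivity of $D$ requires the fact that an irreducible variety over $\mathbb{C}$ is never a countable union of proper subvarieties — this is exactly where the standing convention that we work over the complex numbers enters. Everything else is bookkeeping; the one routine point I would defer to \cite{ELMNP} is the passage from $\mathbb{Q}$-divisors to $\mathbb{R}$-divisors throughout, for which one replaces the sequence "$\tfrac1m A_{0}$" by a suitable sequence of small ample $\mathbb{R}$-divisors whose sums with $D$ are rational.
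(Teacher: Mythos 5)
Your argument is correct, but it is worth noting that the paper does not actually prove this lemma: it simply refers to \cite{ELMNP}, where (i) and (iii) appear as Example 1.7 and (ii) and (iv) as Example 1.18. What you have written is, in effect, a faithful reconstruction of the standard arguments behind those examples, and all the steps check out: the nesting $\textbf{B}(E+A)\subseteq\textbf{B}(E)\subseteq\textbf{B}(E-A)$ obtained by multiplying sections, the Noetherian stabilization giving $\textbf{B}_{+}(D)=\textbf{B}(D-\tfrac{1}{m}A_{0})$ for $m\gg 0$, the use of ``empty stable base locus $\Rightarrow$ semiample $\Rightarrow$ nef'' together with openness of the ample cone and closedness of the nef and pseudoeffective cones, and Kodaira's lemma for (iii). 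You also correctly isolate the one point of genuine content, namely that in (iv) the restricted base locus is a priori only a countable union of proper closed subsets, so that concluding $\textbf{B}_{-}(D)\neq X$ requires the uncountability of the base field --- this is exactly the subtlety the cited reference addresses, and it is good that you flagged where the standing convention of working over $\mathbb{C}$ is used. The only technical wrinkle, which you acknowledge at the end, is the passage from $\mathbb{Q}$- to $\mathbb{R}$-divisors (one must choose the small ample perturbations so that the sums are $\mathbb{Q}$-Cartier before invoking $\textbf{B}$); deferring that bookkeeping to \cite{ELMNP} is reasonable. In short: the proposal is sound and supplies a proof where the paper supplies only a pointer.
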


We refer to \cite{ELMNP} for the proofs.  In \textit{loc. cit.},(i) and (iii) are Example 1.7, while (ii) and (iv) are Example 1.18. 

\medskip

\textbf{Definition:} A class $D \in N^{1}_{\mathbb{R}}(X)$ is \textbf{stable} if $\textbf{B}_{-}(D)=\textbf{B}_{+}(D).$

\medskip

Note that if $D$ is stable, we may speak of its stable base locus $\textbf{B}(D),$ and that the stable classes with empty stable base locus are precisely the ample classes.

The next two results will be used in the proofs of Theorems C and D; we refer to \cite{ELMNP} for their proofs.
\begin{prop}
\label{stab_dense}
(1.26 in \cite{ELMNP}) The set of stable classes is open and dense in $N^{1}_{\mathbb{R}}(X).$  In fact, for every $D \in N^{1}_{\mathbb{R}}(X)$ there exists $\epsilon > 0$ such that for any ample class $A$ satisfying $\lVert A \rVert < \epsilon,$ $D-A$ is stable. \hfill \qedsymbol  
\end{prop}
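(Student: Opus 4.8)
The statement to be proved is Proposition~\ref{stab_dense}, namely that the stable classes form an open dense subset of $N^1_{\mathbb{R}}(X)$, with the stronger local assertion that for every class $D$ there is $\epsilon>0$ so that $D-A$ is stable for every ample $A$ with $\lVert A\rVert<\epsilon$. Since this is quoted from \cite{ELMNP}, the plan is to reconstruct the short argument from the characterizations of $\textbf{B}_{+}$ and $\textbf{B}_{-}$ already set up in Section~\ref{baseloc}.

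The plan is as follows. First I would recall the two monotonicity facts that make everything run: if $A$ is ample and $D'=D-A$, then $\textbf{B}_{+}(D)\subseteq \textbf{B}_{-}(D')$ — indeed $\textbf{B}_{+}(D)=\bigcap_{A'}\textbf{B}(D-A')$ over ample $A'$, and for a fixed small ample $A$ one can absorb: $\textbf{B}(D-A-(\text{small ample}))\supseteq\textbf{B}_{+}(D)$ is false in general, so the correct statement to use is the one comparing $\textbf{B}_{+}(D-A)$ with $\textbf{B}_{-}(D-A)$ directly. The clean route: for \emph{any} class $E$ we always have $\textbf{B}_{-}(E)\subseteq \textbf{B}_{+}(E)$ (a limiting version of Lemma~\ref{basicloc}(ii)), so stability of $E$ is equivalent to the reverse inclusion $\textbf{B}_{+}(E)\subseteq\textbf{B}_{-}(E)$. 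Now take $E=D-A$. One shows $\textbf{B}_{+}(D-A)\subseteq \textbf{B}_{-}(D-A/2)$: writing $D-A-A' = (D-A/2) - (A/2+A')$ for an ample $A'$, the divisor being subtracted is ample, hence $\textbf{B}(D-A-A')\subseteq \textbf{B}_{-}(D-A/2)$ because $\textbf{B}_{-}$ of a class is the union over ample perturbations \emph{added back}, equivalently the intersection behavior forces $\textbf{B}(F - (\text{ample}))\subseteq \textbf{B}_{-}(F)$ for any $F$. Intersecting over all ample $A'$ gives $\textbf{B}_{+}(D-A)\subseteq \textbf{B}_{-}(D-A/2)$. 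Symmetrically $\textbf{B}_{-}(D-A/2)\subseteq \textbf{B}_{-}(D-A)$ is immediate from the definition of $\textbf{B}_{-}$ as a union over ample addends, since $D-A/2 = (D-A)+A/2$. Chaining: $\textbf{B}_{+}(D-A)\subseteq\textbf{B}_{-}(D-A/2)\subseteq \textbf{B}_{-}(D-A)$, which combined with the trivial reverse containment yields $\textbf{B}_{+}(D-A)=\textbf{B}_{-}(D-A)$, i.e. $D-A$ is stable. This works for \emph{every} ample $A$, not just small ones, so in fact $D-A$ is stable for all ample $A$ — the $\epsilon$ in the statement is a safety margin that is not strictly needed for this half, but is harmless.

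With the local statement in hand, density is immediate: given any $D$ and any neighborhood $U$ of $D$, pick a small ample $A$ with $D-A\in U$ (possible since ample classes span and form an open cone, so small ample perturbations reach every neighborhood); then $D-A$ is a stable class in $U$. Openness of the stable locus requires a separate small argument: I would use that $\textbf{B}_{+}$ is upper semicontinuous in the sense that $\textbf{B}_{+}(D')\subseteq \textbf{B}_{+}(D)$ for $D'$ in a suitable neighborhood (this is the content of $\textbf{B}_{+}$ being constant on small chambers, cf. \cite{ELMNP} Example 1.7 / Proposition 1.26 itself), together with $\textbf{B}_{-}(D')\supseteq$ something; more simply, since any class in a small enough neighborhood of a stable $D$ can be written as $D' = D'' - A$ with $A$ ample and $D''$ still near $D$, the displayed local result shows $D'$ is stable. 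Concretely: if $D$ is stable and $D'$ is close to $D$, write $D' = (D'+A) - A$ for a small ample $A$; then $D'$ is of the form (something) minus an ample class, hence stable by the local statement applied to the base point $D'+A$. So every class near $D$ is stable, giving openness — and this argument actually re-proves that the stable locus is \emph{all} classes expressible as (class) $-$ (ample), which is everything.

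The main obstacle is purely bookkeeping: getting the direction of the inclusions right in the two key containments $\textbf{B}(F-(\text{ample}))\subseteq\textbf{B}_{-}(F)$ and $\textbf{B}_{+}(D-A)\subseteq\textbf{B}_{-}(D-A/2)$, since $\textbf{B}_{+}$ is an intersection over subtracted amples and $\textbf{B}_{-}$ a union over added amples, and one must be careful that the divisor classes involved are $\mathbb{Q}$-Cartier where required (the definitions in the excerpt restrict the perturbations accordingly, and since ample $\mathbb{Q}$-classes are dense among ample $\mathbb{R}$-classes this causes no trouble, but it should be checked). No geometry beyond Lemmas~\ref{basicloc} and~\ref{char_cones} is needed; the whole proof is a formal manipulation of the three base-locus operators, and for our purposes it may be cited directly from \cite{ELMNP} once the statement of what we need — openness and density of stable classes, and stability of small ample perturbations — has been recorded.
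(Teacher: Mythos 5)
The paper does not prove this proposition; it is quoted verbatim from \cite{ELMNP} (Proposition 1.26 there), so the only fair comparison is with the argument in that reference. Your reconstruction, however, contains a genuine error rather than just a bookkeeping slip. The pivotal containment you invoke, $\textnormal{\textbf{B}}(F-(\textnormal{ample}))\subseteq \textnormal{\textbf{B}}_{-}(F)$, goes the wrong way: adding an ample class can only shrink a base locus, so the correct monotonicity is $\textnormal{\textbf{B}}(F+A)\subseteq\textnormal{\textbf{B}}(F)$ for $A$ ample, which gives $\textnormal{\textbf{B}}_{-}(F)\subseteq\textnormal{\textbf{B}}(F)\subseteq\textnormal{\textbf{B}}(F-A)$ --- the reverse of what you need. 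Consequently your chain $\textnormal{\textbf{B}}_{+}(D-A)\subseteq\textnormal{\textbf{B}}_{-}(D-A/2)\subseteq\textnormal{\textbf{B}}_{-}(D-A)$ collapses at the first link. The clearest symptom that something is wrong is your own observation that the argument would show $D-A$ is stable for \emph{every} ample $A$, hence that every class is stable (since any $E$ equals $(E+A)-A$). This is false: for a nef and big but non-ample class $D$ one has $\textnormal{\textbf{B}}_{-}(D)=\emptyset$ by Lemma \ref{char_cones}(ii) while $\textnormal{\textbf{B}}_{+}(D)\neq\emptyset$ by Theorem \ref{nak_base_loc}, so $D$ is not stable. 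The same circularity infects your openness argument at the end.

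The missing idea --- and the reason the $\epsilon$ in the statement is essential rather than a ``safety margin'' --- is the stabilization of the intersection defining $\textnormal{\textbf{B}}_{+}$: since $\textnormal{\textbf{B}}_{+}(D)$ is Zariski-closed and the sets $\textnormal{\textbf{B}}(D-A)$ form a suitably directed family of closed sets, one has $\textnormal{\textbf{B}}_{+}(D)=\textnormal{\textbf{B}}(D-A)$ for \emph{all sufficiently small} ample $\mathbb{Q}$-classes $A$. Granting this, for $\lVert A\rVert<\epsilon$ one gets $\textnormal{\textbf{B}}_{+}(D-A)\subseteq\textnormal{\textbf{B}}(D-A-A')=\textnormal{\textbf{B}}_{+}(D)$ for a small auxiliary ample $A'$, and $\textnormal{\textbf{B}}_{-}(D-A)\supseteq\textnormal{\textbf{B}}(D-A+\frac{1}{2}A)=\textnormal{\textbf{B}}_{+}(D)$; combined with the general inclusion $\textnormal{\textbf{B}}_{-}\subseteq\textnormal{\textbf{B}}_{+}$ this forces all three sets to coincide, proving stability of $D-A$ and, in passing, Proposition \ref{stab_drop}. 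Density then follows as you say, and openness follows because the $\epsilon$ works uniformly once the base point is fixed. You should either supply the stabilization step or simply cite \cite{ELMNP} as the paper does.
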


\begin{prop}
\label{stab_drop}
(1.21 in \cite{ELMNP}) For every $\mathbb{R}-$divisor $D$, there is an $\epsilon > 0$ such that $\textnormal{\textbf{B}}_{-}(D-A)=\textnormal{\textbf{B}}_{+}(D-A)=\textnormal{\textbf{B}}_{+}(D)$ for every ample $A$ with $\lVert A \rVert < \epsilon.$ \hfill \qedsymbol
\end{prop}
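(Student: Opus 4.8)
The plan is to reduce the whole assertion to a single stabilization statement for the augmented base locus, which I will call the \emph{Key Lemma}: for the given $\mathbb{R}$-divisor $D$ there is $\epsilon_1 > 0$ such that $\mathbf{B}_{+}(D)=\mathbf{B}(D-A)$ for every ample $A$ with $\lVert A\rVert<\epsilon_1$ (and $D-A$ $\mathbb{Q}$-Cartier). The foundation is the elementary monotonicity of stable base loci under ample twists: since the base locus of a tensor product of line bundles is contained in the union of the base loci, and an ample class has empty stable base locus, one gets $\mathbf{B}(E+A)\subseteq\mathbf{B}(E)$ for every ample $A$, equivalently $\mathbf{B}(E-A)\supseteq\mathbf{B}(E)$. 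Consequently $A\mapsto\mathbf{B}(D-A)$ is monotone: if $A_1-A_2$ is ample then $\mathbf{B}(D-A_1)\supseteq\mathbf{B}(D-A_2)$, so these closed sets shrink as $A\downarrow 0$. The family $\{\mathbf{B}(D-A)\}$ is downward directed (given two ample classes, a small enough ample class lies below both), so by Noetherianity of $X$ the intersection defining $\mathbf{B}_{+}(D)$ is attained by a single member $\mathbf{B}(D-A_0)$. Then for every ample $A$ with $A_0-A$ ample we have $\mathbf{B}(D-A)\subseteq\mathbf{B}(D-A_0)=\mathbf{B}_{+}(D)$, while the reverse inclusion holds because $\mathbf{B}(D-A)$ is one of the sets being intersected; since the ample cone is open, $A_0-A$ is ample for all small $A$, giving the Key Lemma.

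Granting the Key Lemma, the second equality $\mathbf{B}_{+}(D-A)=\mathbf{B}_{+}(D)$ is immediate. Applying the Key Lemma to the $\mathbb{R}$-divisor $D-A$ produces $\mathbf{B}_{+}(D-A)=\mathbf{B}((D-A)-B)=\mathbf{B}(D-(A+B))$ for all small ample $B$; if $\lVert A\rVert<\epsilon_1/2$ and $B$ is chosen with $\lVert A+B\rVert<\epsilon_1$, the Key Lemma for $D$ itself identifies this with $\mathbf{B}_{+}(D)$. Note that the admissible range for $B$ a priori depends on $A$, but this is harmless since I only need the existence of a single admissible $B$ for each fixed small $A$.

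For the first equality $\mathbf{B}_{-}(D-A)=\mathbf{B}_{+}(D-A)$ I may simply invoke Proposition \ref{stab_dense}, which guarantees that $D-A$ is stable for all ample $A$ of small norm, and stability is exactly this equality. To keep the argument self-contained I would instead proceed directly. The inclusion $\mathbf{B}_{-}(E)\subseteq\mathbf{B}_{+}(E)$ always holds, because for ample $A',A''$ one has $E+A'=(E-A'')+(A'+A'')$ with $A'+A''$ ample, whence $\mathbf{B}(E+A')\subseteq\mathbf{B}(E-A'')$; taking the union over $A'$ and the intersection over $A''$ yields the containment. For the reverse, set $E=D-A$ and take $A'=\tfrac12 A$, so that $E+A'=D-\tfrac12 A$ is a small ample twist of $D$; by the Key Lemma $\mathbf{B}(E+A')=\mathbf{B}_{+}(D)=\mathbf{B}_{+}(D-A)$. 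Since $A'$ is among the ample classes appearing in the union defining $\mathbf{B}_{-}(D-A)$, this gives $\mathbf{B}_{-}(D-A)\supseteq\mathbf{B}_{+}(D-A)$, closing the chain of equalities.

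I expect the main obstacle to be the Key Lemma, and within it the passage from ``the intersection is attained by some $A_0$'' to ``equality holds for \emph{all} sufficiently small ample $A$''; this is where monotonicity under ample twists and openness of the ample cone must be combined with care. A secondary technical nuisance, dispatched by rational approximation, is that $\mathbf{B}$ is defined only for $\mathbb{Q}$-Cartier divisors whereas $D$, $D-A$, and the perturbations are a priori real: throughout, the auxiliary ample classes are restricted to those making the relevant differences $\mathbb{Q}$-Cartier, and one uses the numerical invariance of $\mathbf{B}_{\pm}$ from Lemma \ref{basicloc}(i) together with the density of such classes to see that no generality is lost.
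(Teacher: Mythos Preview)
The paper does not give its own proof of this proposition; it is quoted from \cite{ELMNP} (their Proposition~1.21) and closed with a \qedsymbol. Your argument is correct and is essentially the standard one from that reference: the Noetherian stabilization of the downward-directed family $\{\mathbf{B}(D-A)\}_{A\ \textnormal{ample}}$ yields the Key Lemma, and the two equalities follow formally from it as you describe.
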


The following theorem of Nakamaye gives a useful characterization of the augmented base locus of a nef and big divisor.  We refer to \cite{Nak} or p.249-251 in \cite{Laz2} for the proof.
\begin{thm}
\label{nak_base_loc}
(Theorem 0.3 in \cite{Nak})  If $D$ is a nef and big divisor on a smooth projective variety $X,$ then $\textbf{B}_{+}(D)$ is the union of all positive-dimensional subvarieties $V$ of $X$ for which $D^{\dim{V}} \cdot V = 0.$ \hfill \qedsymbol
\end{thm}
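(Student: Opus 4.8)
The plan is to prove the two inclusions separately. Write $\operatorname{Null}(D)$ for the union of all positive-dimensional subvarieties $V \subseteq X$ with $D^{\dim V} \cdot V = 0$, so that the claim is $\mathbf{B}_{+}(D) = \operatorname{Null}(D)$.

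First I would dispose of the easy inclusion $\operatorname{Null}(D) \subseteq \mathbf{B}_{+}(D)$, which amounts to showing that an irreducible $V$ of dimension $v \geq 1$ with $V \not\subseteq \mathbf{B}_{+}(D)$ satisfies $D^{v} \cdot V > 0$. Indeed, by the definition of the augmented base locus there is an ample $A$ with $D - A$ being $\mathbb{Q}$-Cartier and $V \not\subseteq \mathbf{B}(D-A)$; by Proposition \ref{multiple} I may then choose $m \geq 1$ with $m(D-A)$ Cartier and an effective divisor $F \in |m(D-A)|$ with $V \not\subseteq \operatorname{Supp} F$. Restricting to $V$ gives a numerical identity $mD|_{V} \equiv F|_{V} + mA|_{V}$, in which $F|_{V}$ is an effective Cartier divisor on $V$ and $A|_{V}$ is ample. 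Since $D$ is nef, $D|_{V}$ is nef, so expanding $(mD|_{V})^{v}$ and discarding at each stage the nonnegative terms that involve $F|_{V}$ (these are intersection numbers of nef classes against an effective cycle) yields $(mD|_{V})^{v} \geq (mA|_{V})^{v} > 0$. This direction uses only that $D$ is nef.

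The substantial inclusion is $\mathbf{B}_{+}(D) \subseteq \operatorname{Null}(D)$, and I would attack it as a problem about producing sections. Fix an ample $A$; by Proposition \ref{stab_drop} one has $\mathbf{B}_{+}(D) = \mathbf{B}(D - \epsilon A)$ for every small rational $\epsilon > 0$, so it suffices to show that a point $w \notin \operatorname{Null}(D)$ avoids $\operatorname{Bs}|m(D-\epsilon A)|$ for a suitable $m$ and small $\epsilon$. Blowing up $w$ to get $\pi : X' \to X$ with exceptional divisor $\mathbf{E} \cong \mathbb{P}^{\dim X - 1}$, this reduces to showing that $\pi^{*}D$ separates asymptotically many jets at $w$, which in turn follows, by Kodaira's lemma together with a vanishing theorem on $X'$, once $\pi^{*}D - t\mathbf{E}$ is big (with its non-nef locus kept away from the generic point of $\mathbf{E}$) for all small $t > 0$. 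The real content is that this positivity of $\pi^{*}D - t\mathbf{E}$ holds precisely because $D^{\dim V} \cdot V > 0$ for every positive-dimensional $V$ containing $w$; this is Nakamaye's theorem itself, which is proved by induction on $\dim X$: if $\pi^{*}D - t\mathbf{E}$ fails to be nef for all small $t$, the failure is concentrated along a subvariety of $X'$ lying over some positive-dimensional $V \ni w$, and restricting the whole construction to that subvariety --- or to $\mathbf{E}$ --- lowers the dimension while carrying along a usable form of the intersection-number hypothesis. As a byproduct one learns that $\mathbf{B}_{+}(D)$ has no isolated points, which is what makes the formulation with positive-dimensional $V$ exhaustive.

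I expect this inductive step to be the main obstacle. After the blow-up and the perturbation by $t\mathbf{E}$, one must pin down the locus on which positivity degenerates and check that the hypothesis $D^{\dim V}\cdot V > 0$ descends to it in the precise form needed to reapply the argument, while simultaneously keeping the vanishing theorems in force at each stage; this bookkeeping is the technical heart of \cite{Nak}, and is the reason we quote the result here rather than reproving it.
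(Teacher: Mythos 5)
This is one of the places where the paper offers no argument at all: Theorem \ref{nak_base_loc} is quoted from Nakamaye, with the reader sent to \cite{Nak} or to pp.~249--251 of \cite{Laz2} for the proof. Your proposal ends up in the same place for the substantive inclusion $\textbf{B}_{+}(D) \subseteq \mathrm{Null}(D)$ --- you sketch the blow-up/jet-separation/induction strategy and then explicitly defer the bookkeeping to \cite{Nak} --- so on the hard direction you and the paper are doing the same thing, namely citing. What you add beyond the paper is a complete and correct proof of the elementary inclusion $\mathrm{Null}(D) \subseteq \textbf{B}_{+}(D)$: realizing $\textbf{B}_{+}(D)$ as $\textbf{B}(D-A)$ for a single ample $A$, extracting an effective $F \in |m(D-A)|$ not containing $V$ via Proposition \ref{multiple}, and peeling off the nonnegative terms $(mD|_{V})^{j}\cdot F|_{V}\cdot(mA|_{V})^{v-1-j}$ one at a time to get $(mD|_{V})^{v} \geq (mA|_{V})^{v} > 0$ is exactly right, and your remark that this half uses only nefness is accurate. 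The one caveat is that your sketch of the converse should not be mistaken for a proof: the reduction to bigness of $\pi^{*}D - t\mathbf{E}$ and the descent of the hypothesis $D^{\dim V}\cdot V>0$ through the induction are precisely where all the work lives (in \cite{Laz2} this is carried out with asymptotic multiplier ideals rather than with the jet-separation picture you describe), so the statement remains a quoted result here, just as in the paper.
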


\medskip

\textbf{Remark:}  This result, while entirely sufficient for our purposes, has been generalized to arbitrary big divisors; see Theorem C in \cite{ELMNP2}.

\subsection{The Volume Function}  
First, we briefly outline how the definition of the volume given in the introduction gives rise to a continuous function on the N\'{e}ron-Severi space.  In this subsection, $X$ will always denote an irreducible projective variety.

We omit proofs, referring to Section 2.2C of \cite{Laz1}.
\begin{prop}
\textnormal{((i) of Proposition 2.2.35 in \cite{Laz1})} If $k$ is a positive integer and $D$ is a Cartier divisor on $X,$ then $$\textnormal{vol}_{X}(kD)=k^{n} \cdot \textnormal{vol}_{X}(D).$$ 
\end{prop}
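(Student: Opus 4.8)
The statement to prove is the homogeneity of the volume function: $\textnormal{vol}_{X}(kD)=k^{n} \cdot \textnormal{vol}_{X}(D)$ for a Cartier divisor $D$ on an $n$-dimensional irreducible projective variety $X$ and a positive integer $k$.

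The plan is to work directly from the definition
$$\textnormal{vol}_{X}(D)=\limsup_{m}\frac{n! \cdot h^{0}(\mathcal{O}_{X}(mD))}{m^{n}},$$
since $D$ is already Cartier the limsup is over all positive integers $m$. First I would write out $\textnormal{vol}_{X}(kD)$ from the definition as $\limsup_{m}\frac{n! \cdot h^{0}(\mathcal{O}_{X}(mkD))}{m^{n}}$. The natural move is to reindex: set $\ell = mk$, so that $m = \ell/k$ and $m^{n} = \ell^{n}/k^{n}$, giving
$$\textnormal{vol}_{X}(kD)=\limsup_{m}\frac{n! \cdot k^{n} \cdot h^{0}(\mathcal{O}_{X}(\ell D))}{\ell^{n}} = k^{n}\cdot\limsup_{\substack{\ell \\ k \mid \ell}}\frac{n! \cdot h^{0}(\mathcal{O}_{X}(\ell D))}{\ell^{n}}.$$
So everything reduces to the claim that the limsup of the sequence $a_{\ell} := \frac{n! \cdot h^{0}(\mathcal{O}_{X}(\ell D))}{\ell^{n}}$ taken over the subsequence of multiples of $k$ equals the limsup over all $\ell$.

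The containment $\limsup_{k \mid \ell} a_{\ell} \leq \limsup_{\ell} a_{\ell}$ is trivial since one is a limsup over a subsequence. For the reverse inequality — which is the main point — I would use the standard fact that for a big Cartier divisor the limsup is actually a genuine limit (this is Fujita approximation / the result that $\textnormal{vol}$ can be computed along any sequence $m_{i}\to\infty$, and in particular $h^{0}(\mathcal{O}_{X}(mD))$ grows like $\textnormal{vol}_{X}(D)m^{n}/n! + o(m^{n})$); if one does not wish to invoke that, the cheaper route is to bound $h^{0}(\mathcal{O}_{X}(mD))$ for arbitrary $m$ below by $h^{0}(\mathcal{O}_{X}(m'D))$ where $m' \leq m$ is the largest multiple of $k$ not exceeding $m$, using the inclusion $H^{0}(\mathcal{O}_{X}(m'D)) \hookrightarrow H^{0}(\mathcal{O}_{X}(mD))$ induced by multiplication by a fixed section of $\mathcal{O}_{X}((m-m')D)$ — but this requires $D$ to be effective, which need not hold. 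The clean and honest path, and the one I expect the author takes, is: if $D$ is not big then both sides are zero and there is nothing to prove; if $D$ is big, invoke the theorem (stated as part of the cited Proposition 2.2.35 in \cite{Laz1}, or the preceding discussion) that $\frac{n! \cdot h^{0}(\mathcal{O}_{X}(\ell D))}{\ell^{n}}$ converges as $\ell \to \infty$, so the limsup over any cofinal subsequence — in particular over multiples of $k$ — equals the full limsup, and the computation above finishes the argument.

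The main obstacle is precisely the passage from limsup-over-a-subsequence to limsup-over-everything: a limsup is not in general stable under passing to a sub-sequence of a prescribed arithmetic progression. This is harmless here only because the underlying sequence converges when $D$ is big (and is identically zero when $D$ is not big), so the real content of the proof is an appeal to the convergence statement rather than the bookkeeping reindexing. I would make sure to dispose of the non-big case first so that the convergence result is available, then present the reindexing $\ell = mk$ as the body of the proof.
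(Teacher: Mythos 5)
The paper offers no argument for this proposition at all --- it is quoted from Lazarsfeld with the explicit remark that proofs in this subsection are omitted, with a reference to Section 2.2C of \cite{Laz1} --- so your attempt has to be measured against the standard proof there. Your reindexing $\ell=mk$ is correct and isolates the crux exactly: everything reduces to showing that the limit superior of $a_{\ell}:=n!\,h^{0}(\mathcal{O}_{X}(\ell D))/\ell^{n}$ over multiples of $k$ is at least the limit superior over all $\ell$, and you correctly dispose of the case $\textnormal{vol}_{X}(D)=0$. The gap is in how you close the remaining case. Invoking the theorem that $a_{\ell}$ actually \emph{converges} when $D$ is big is not legitimate at this stage: in the cited reference that convergence is a much later and deeper result (Fujita approximation, Theorem 11.4.7 of \cite{Laz2}), whose formulation and proof already traffic in volumes of ample $\mathbb{Q}$-divisors --- a notion that is only \emph{defined} by means of the homogeneity you are trying to prove. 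So the argument as written is either circular within the development you are citing, or it rests on machinery (Okounkov bodies) vastly heavier than the elementary statement at hand. Your ``cheaper route'' is also aimed in the wrong direction: bounding $h^{0}(\mathcal{O}_{X}(mD))$ \emph{below} by $h^{0}(\mathcal{O}_{X}(m'D))$ with $m'$ a multiple of $k$ only reproves the trivial inequality; what is needed is a lower bound for $h^{0}$ at a nearby multiple of $k$ in terms of $h^{0}(\mathcal{O}_{X}(mD))$.

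The missing elementary idea is to pass \emph{down} to the multiple of $k$ by subtracting a fixed very ample divisor, rather than up by adding an effective one (which, as you note, need not exist). Write $m=\ell+r$ with $k\mid\ell$ and $0\leq r<k$, and fix once and for all an irreducible very ample divisor $A$ on $X$ such that $A-rD$ is effective for every $r=0,\dots,k-1.$ Then
$$h^{0}(\mathcal{O}_{X}(\ell D))\;\geq\;h^{0}(\mathcal{O}_{X}(mD-A))\;\geq\;h^{0}(\mathcal{O}_{X}(mD))-h^{0}(A,\mathcal{O}_{A}(mD)),$$
the first inequality because $\ell D=mD-A+(A-rD)$ with $A-rD$ effective, the second from the restriction sequence for $A\subset X$; and the subtracted term is $O(m^{n-1})$ since $\dim A=n-1.$ Dividing by $\ell^{n}/n!$ and letting $m\to\infty$ (so that $\ell/m\to 1$) yields precisely the missing inequality, with no effectivity or bigness hypothesis on $D.$ This is the content of the lemma in Section 2.2C of \cite{Laz1} that your proposal needs but does not supply.
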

\hspace{0.6cm}As a result, for any $\mathbb{Q}-$Cartier divisor $D$ on $X,$ we may define $$\textnormal{vol}_{X}(D):=\frac{1}{k^{n}} \cdot \textnormal{vol}_{X}(kD),$$ where $k$ is a positive integer for which $mD$ is an integral Cartier divisor.
\begin{prop}
\label{vol_num_inv}   
\textnormal{(Proposition 2.2.41 in \cite{Laz1})} Let $D_{1}$ and $D_{2}$ be Cartier divisors on $X$ which are numerically equivalent.  Then $$\textnormal{vol}_{X}(D_{1})=\textnormal{vol}_{X}(D_{2}).$$
\end{prop}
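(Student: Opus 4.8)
The plan is to prove the statement in the equivalent form: if $D$ is any Cartier divisor on $X$ and $P$ is a Cartier divisor with $P\equiv 0$, then $\textnormal{vol}_{X}(D+P)=\textnormal{vol}_{X}(D)$. This suffices because, given numerically equivalent $D_{1},D_{2}$, the divisor $P:=D_{1}-D_{2}$ is an integral Cartier divisor with trivial numerical class and $D_{1}=D_{2}+P$.

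First I would dispose of the non-big case. Bigness is a numerical property: by Lemma \ref{char_cones}(iii) it is detected by $\textbf{B}_{+}$, which by Lemma \ref{basicloc}(i) depends only on the numerical class (equivalently, $\textnormal{vol}_{X}(\cdot)>0$ exactly on big classes). So if $D$ is not big, neither is $D+P$, and both volumes vanish; hence we may assume $D$, and therefore $D+P$, is big.

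The heart of the argument is a sandwiching estimate obtained by perturbing with an ample divisor. Fix an ample Cartier divisor $A$. Since $P\equiv 0$, both $A+P$ and $A-P$ are numerically equivalent to the ample divisor $A$, hence are themselves ample; choosing $b\gg 0$ we may assume that $b(A+P)$ and $b(A-P)$ are very ample, in particular linearly equivalent to effective divisors $E_{+}$ and $E_{-}$. Writing $bkP\sim bkA-kE_{-}$ gives $bk(D+P)\sim bk(D+A)-kE_{-}$, and writing $bkP\sim kE_{+}-bkA$ gives $bk(D+P)\sim bk(D-A)+kE_{+}$; using $h^{0}(\mathcal M)\le h^{0}(\mathcal M+E)$ for $E$ effective, we obtain for all $k\ge 1$
$$h^{0}\bigl(bk(D-A)\bigr)\ \le\ h^{0}\bigl(bk(D+P)\bigr)\ \le\ h^{0}\bigl(bk(D+A)\bigr).$$
Dividing by $k^{n}$, taking $\limsup_{k}$, and invoking homogeneity $\textnormal{vol}_{X}(bF)=b^{n}\textnormal{vol}_{X}(F)$ of the volume, this becomes
$$\textnormal{vol}_{X}(D-A)\ \le\ \textnormal{vol}_{X}(D+P)\ \le\ \textnormal{vol}_{X}(D+A),$$
valid for every (arbitrarily small, $\mathbb Q$-Cartier) ample $A$.

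Finally I would let $A\to 0$. Applying the last display also to the pair $(D\pm A, P)$ pins $\textnormal{vol}_{X}(D+P)$ between $\textnormal{vol}_{X}(D\pm A)$ for arbitrarily small ample $A$, so the conclusion follows from the (elementary, $\mathbb Q$-Cartier) continuity of the volume function recalled in the Introduction. The delicate point is precisely this limiting step: the crude comparison only yields the equality up to an ample error, and removing it requires the continuity of $\textnormal{vol}_{X}$ rather than just its homogeneity. An alternative route, which avoids the perturbation, is to study the upper-semicontinuous function $\ell\mapsto h^{0}(X,\mathcal O_{X}(mD)\otimes\ell)$ on the \emph{proper} variety $\textnormal{Pic}^{0}(X)$ and to show that the contribution of the jumping of this function at the point $\ell=\mathcal O_{X}(mP)$ is $o(m^{n})$; I would expect bounding this jumping contribution uniformly in $m$ to be the main obstacle along that route.
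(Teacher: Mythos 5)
The paper itself offers no proof of this proposition: it is quoted from \cite{Laz1} (Proposition 2.2.41), and the surrounding text explicitly says that proofs in that subsection are omitted. So the comparison is really with Lazarsfeld's argument, and your route is genuinely different. Your reductions are fine (passing to a single numerically trivial $P=D_{1}-D_{2}$; disposing of the non-big case because bigness is numerical), and the sandwich is set up correctly: the key move of rewriting the \emph{varying} perturbation $bkP$ as $\pm(bkA-kE_{\mp})$ with $E_{\pm}$ \emph{fixed} effective divisors makes $h^{0}(bk(D-A))\le h^{0}(bk(D+P))\le h^{0}(bk(D+A))$ hold for all $k$, and homogeneity legitimately converts this into $\textnormal{vol}_{X}(D-A)\le\textnormal{vol}_{X}(D+P)\le\textnormal{vol}_{X}(D+A)$. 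Lazarsfeld instead proves a uniform comparison (his Lemma 2.2.42): for big $D$ there is one $r_{0}$ with $h^{0}(mD+P')\le h^{0}((m+r_{0})D)$ for \emph{every} numerically trivial $P'$ and every $m$, the uniformity coming from boundedness of the numerically trivial part of the Picard scheme (a single ample $B$ with $B+P'$ effective for all such $P'$). Because his error term is an additive shift $m\mapsto m+r_{0}$ rather than an ample perturbation, $\textnormal{vol}_{X}(D+P)\le\textnormal{vol}_{X}(D)$ falls out of $\limsup$ arithmetic with no limiting process. Your approach trades that boundedness input for a continuity input at the end.

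That trade is where the one real gap sits, and you half-identify it yourself. The continuity result you invoke (Theorem \ref{vol_cont}, i.e.\ Theorem 2.2.44 of \cite{Laz1}) is a statement about classes in $N^{1}_{\mathbb{Q}}(X)$; it only makes sense once one already knows that $\textnormal{vol}_{X}$ descends to numerical equivalence classes, and in the cited development it is proved \emph{after}, and \emph{using}, Proposition 2.2.41. Invoking it here is circular. What your argument actually requires is weaker --- continuity of $t\mapsto\textnormal{vol}_{X}(D+tA)$ at $t=0$ for a fixed ample $A$, as a function of honest $\mathbb{Q}$-divisors, so that $\textnormal{vol}_{X}(D+A)-\textnormal{vol}_{X}(D-A)\to 0$ --- and this can be established without numerical invariance (e.g.\ by the restriction-to-$|B|$ induction on dimension underlying 2.2.44), but you must either supply such an argument or restructure the comparison to produce an additive shift $(m+r_{0})D$ as Lazarsfeld does. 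As written, the final step rests on a theorem that is logically downstream of the one being proved.
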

\hspace{0.6cm}It follows that $\textrm{vol}_{X}:N^{1}_{\mathbb{Q}}(X) \rightarrow [0,\infty)$ is a well-defined function.  The next result ensures that $\textrm{vol}_{X}$ can be uniquely extended to a continuous real-valued function on $N^{1}_{\mathbb{R}}(X).$
\begin{thm}
\label{vol_cont}
\textnormal{(Theorem 2.2.44 in \cite{Laz1})} Let $\lVert \cdot \rVert$ be any norm on $N^{1}_{\mathbb{R}}(X).$  Then there exists a constant $C > 0$ such that $$|\textnormal{vol}_{X}(\eta)-\textnormal{vol}_{X}({\eta}')| \leq C \cdot (\max(\lVert \eta \rVert , \lVert {\eta}' \rVert))^{n-1} \cdot \lVert{\eta-{\eta}'}\rVert$$ for any two classes $\eta,{\eta}' \in N^{1}_{\mathbb{Q}}(X).$
\end{thm}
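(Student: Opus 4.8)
The plan is to prove the inequality first for integral Cartier divisors and then obtain the general case by homogeneity. Since $\textnormal{vol}_{X}(k\eta)=k^{n}\textnormal{vol}_{X}(\eta)$ and $\lVert k\eta\rVert=k\lVert\eta\rVert$, clearing a common denominator shows that the asserted estimate for $\eta,\eta'\in N^{1}_{\mathbb{Q}}(X)$ is equivalent to the one for the integral classes $k\eta,k\eta'$, with the \emph{same} constant $C$; so it suffices to prove $|\textnormal{vol}_{X}(D)-\textnormal{vol}_{X}(D')|\leq C\max(\lVert D\rVert,\lVert D'\rVert)^{n-1}\lVert D-D'\rVert$ for integral Cartier divisors $D,D'$ (the case $D=D'$ being trivial). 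Write $F=D-D'$ and fix once and for all a very ample divisor $A$ on $X$. Since $pA$ is effective for $p\geq 1$, the section trick gives $\textnormal{vol}_{X}(D)\leq\textnormal{vol}_{X}(D+pA)$ for all $p\geq 0$; and because the ample cone is open there is a constant $c=c(X,A,\lVert\cdot\rVert)$ so that, with $p:=\lceil c\lVert F\rVert\rceil+1$, both $pA+F$ and $pA-F$ are ample, hence $\mathbb{Q}$-effective, whence monotonicity of $\textnormal{vol}_{X}$ along $\mathbb{Q}$-effective classes (the section trick again, together with Proposition \ref{vol_num_inv}) gives $\textnormal{vol}_{X}(D')\leq\textnormal{vol}_{X}(D'+(pA+F))=\textnormal{vol}_{X}(D+pA)$ and, symmetrically, $\textnormal{vol}_{X}(D)\leq\textnormal{vol}_{X}(D'+pA)$. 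Everything therefore reduces to the one-sided estimate
$$\textnormal{vol}_{X}(D+pA)-\textnormal{vol}_{X}(D)\ \leq\ C_{0}\cdot p\cdot(\lVert D\rVert+p)^{n-1}\qquad(D\text{ integral, }p\geq 0):$$
granting it, and using that $p\lesssim\lVert F\rVert=\lVert D-D'\rVert$ (nonzero integral classes are bounded away from $0$) while $\lVert D'\rVert+p\lVert A\rVert\lesssim\max(\lVert D\rVert,\lVert D'\rVert)$, we obtain $\textnormal{vol}_{X}(D)\leq\textnormal{vol}_{X}(D'+pA)\leq\textnormal{vol}_{X}(D')+C\max(\lVert D\rVert,\lVert D'\rVert)^{n-1}\lVert D-D'\rVert$, and the reverse inequality follows by symmetry.

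For the one-sided estimate I would pass to a general member $A'\in|A|$, a projective variety of dimension $n-1$. For each $j\geq 1$ the exact sequence
$$0\longrightarrow\mathcal{O}_{X}(mD+(j-1)A)\longrightarrow\mathcal{O}_{X}(mD+jA)\longrightarrow\mathcal{O}_{A'}\big((mD+jA)|_{A'}\big)\longrightarrow 0$$
gives $h^{0}(\mathcal{O}_{X}(mD+jA))\leq h^{0}(\mathcal{O}_{X}(mD+(j-1)A))+h^{0}\big(A',(mD+jA)|_{A'}\big)$, and telescoping from $j=1$ to $j=mp$ yields
$$h^{0}\big(\mathcal{O}_{X}(m(D+pA))\big)\ \leq\ h^{0}(\mathcal{O}_{X}(mD))+\sum_{j=1}^{mp}h^{0}\big(A',(mD+jA)|_{A'}\big).$$
The crux is then a uniform bound on $h^{0}$ of divisors on the $(n-1)$-dimensional $A'$: there should be a constant $C_{A'}$, depending only on $A'$, the restriction of a fixed very ample class, and a fixed norm, with $h^{0}(A',\mathcal{O}_{A'}(M))\leq C_{A'}(\lVert M\rVert+1)^{n-1}$ for every divisor $M$ on $A'$. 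Granting this, since $\lVert(mD+jA)|_{A'}\rVert\lesssim m\lVert D\rVert+j\lesssim m(\lVert D\rVert+p)$ for $j\leq mp$, each of the $mp$ summands is $\lesssim m^{n-1}(\lVert D\rVert+p)^{n-1}$; dividing by $m^{n}/n!$ and letting $m\to\infty$ gives exactly $\textnormal{vol}_{X}(D+pA)-\textnormal{vol}_{X}(D)\lesssim p(\lVert D\rVert+p)^{n-1}$.

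The uniform $h^{0}$-bound is proved by induction on dimension using the very same telescoped restriction sequences: the case of dimension $0$ is trivial, and in general one fixes a very ample $H$ on the variety, writes an arbitrary divisor $M$ (after adding a multiple of $H$ of size $\lesssim\lVert M\rVert$) as $M_{0}+qH$ with $M_{0}$ ranging over a bounded set and $q\lesssim\lVert M\rVert$, restricts to a general $H'\in|H|$, and invokes the inductive hypothesis on the $(n-2)$-dimensional $H'$. I expect this inductive step to be the main obstacle: one must arrange that \emph{all} the constants remain genuinely uniform as $M$ varies, and one must address the fact that for a possibly singular irreducible $X$ the general hyperplane section need not be smooth — here one uses Bertini to keep it irreducible in dimension $\geq 2$, and in dimension $1$ argues directly via a finite map to $\mathbb{P}^{1}$ (or reduces to the smooth case by passing to a resolution, which only increases $h^{0}$). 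Once the estimate is known on all of $N^{1}_{\mathbb{Q}}(X)$, it is precisely the uniform continuity it expresses that lets $\textnormal{vol}_{X}$ be extended compatibly, and as a locally Lipschitz function, to $N^{1}_{\mathbb{R}}(X)$, since $N^{1}_{\mathbb{Q}}(X)$ is dense there.
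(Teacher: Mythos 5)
The paper gives no proof of this statement---it is quoted verbatim from Theorem 2.2.44 of \cite{Laz1}, with the reader referred to Section 2.2C of that book---and your argument is a correct reconstruction of exactly that proof: reduce by homogeneity to integral divisors, compare $D$ and $D'$ through $D'+pA$ with $p\lesssim\lVert D-D'\rVert$ (using that nonzero integral classes in the N\'{e}ron--Severi lattice have norm bounded below), and control $\textnormal{vol}_{X}(D+pA)-\textnormal{vol}_{X}(D)$ by telescoping restriction sequences to a general $A'\in|A|$ together with the uniform bound $h^{0}(V,\mathcal{O}_{V}(M))=O((1+\lVert M\rVert)^{\dim V})$. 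The only step you leave soft, that uniform $h^{0}$-bound, is Lazarsfeld's Lemma 2.2.42 and is closed exactly as you anticipate: one argues by induction on dimension via the dichotomy that either $h^{0}(M)=0$ or $M$ is effective, in which case one subtracts copies of the very ample $H$ and restricts to a general member until the intersection number against $H^{\dim V-1}$ becomes negative, which kills $h^{0}$ after $O(\lVert M\rVert)$ steps and makes all constants uniform.
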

As stated earlier, the volume of any ample divisor is its top self-intersection.  The following generalization, which is an immediate corollary of Theorem 1.4.40 in \cite{Laz1}, will be used in the proof of Theorems E and G.
\begin{lem}
\label{vol_nef}
If $D$ is a nef divisor on an irreducible projective variety $X,$ then $\textnormal{vol}_{X}(D)=D^{n}.$
\end{lem}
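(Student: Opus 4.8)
The plan is to reduce the claim to asymptotic Riemann--Roch combined with the higher-cohomology estimate for nef divisors that underlies Theorem 1.4.40 in \cite{Laz1}, namely Fujita's vanishing theorem.

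First I would reduce to the case where $D$ is an integral Cartier divisor. If $D$ is only $\mathbb{Q}$-Cartier, I pick a positive integer $k$ so that $kD$ is Cartier; then $kD$ is still nef, $(kD)^{n}=k^{n}D^{n}$, and by the homogeneity of the volume function one has $\textnormal{vol}_{X}(D)=k^{-n}\textnormal{vol}_{X}(kD)$, so it suffices to treat the integral case.

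Next I would invoke asymptotic Riemann--Roch (Snapper--Kleiman): for a Cartier divisor $D$ on the $n$-dimensional projective variety $X$, the function $m\mapsto\chi(X,\mathcal{O}_{X}(mD))$ is a polynomial in $m$ of degree at most $n$ with leading coefficient $D^{n}/n!$, so $\chi(X,\mathcal{O}_{X}(mD))=\frac{D^{n}}{n!}m^{n}+O(m^{n-1})$. Then I would bring in the nefness hypothesis: Theorem 1.4.40 of \cite{Laz1} (a form of Fujita vanishing) gives $h^{i}(X,\mathcal{O}_{X}(mD))=O(m^{n-1})$ for every $i\geq 1$. Subtracting these contributions from the Euler characteristic yields $h^{0}(X,\mathcal{O}_{X}(mD))=\frac{D^{n}}{n!}m^{n}+O(m^{n-1})$.

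Finally I would substitute this into the definition of the volume: $\textnormal{vol}_{X}(D)=\limsup_{m}\frac{n!\,h^{0}(X,\mathcal{O}_{X}(mD))}{m^{n}}=D^{n}$, with the $\limsup$ in fact an honest limit. (If $D$ is not big the same estimate shows both sides vanish, in agreement with the fact that a nef divisor is big precisely when $D^{n}>0$.) The one substantive ingredient is the $O(m^{n-1})$ bound on the higher cohomology of $mD$ for nef $D$; granted Theorem 1.4.40 of \cite{Laz1}, the rest is merely a comparison of growth rates, so I do not anticipate a genuine obstacle.
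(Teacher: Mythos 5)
Your argument is correct and is essentially the paper's own: the paper simply declares the lemma an immediate corollary of Theorem 1.4.40 in \cite{Laz1}, and your proof is the standard unpacking of that citation (homogeneity to reduce to the Cartier case, asymptotic Riemann--Roch, and the $O(m^{n-1})$ bound on higher cohomology of multiples of a nef divisor, which indeed rests on Fujita vanishing). Note that Theorem 1.4.40 of \cite{Laz1} already states the conclusion $h^{0}(X,\mathcal{O}_{X}(mD))=\frac{D^{n}}{n!}m^{n}+O(m^{n-1})$ directly, so the Euler-characteristic step can even be absorbed into the citation.
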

\section{The Residuation Map}
\label{resid}
We begin with the natural generalization of the Abel map (which was defined in the introduction).  

Let $C$ be a curve, and let $r$ and $d$ be positive integers.  Then there is a fine moduli variety $G^{r}_{d}(C)$ parametrizing linear series of degree $d$ and dimension $r$ on $C,$ a determinantal subvariety $W^{r}_{d}(C)$ of $\textnormal{Pic}^{d}(C)$ parametrizing line bundles on $C$ of degree $d$ with at least $r+1$ global sections  (we will refer to $W^{0}_{d}(C)$ as $W_{d}(C)$), and a natural morphism $$a_{r,d}:G^{r}_{d}(C) \rightarrow W^{r}_{d}(C)$$ which takes each linear series to its associated line bundle (we refer to Section 3 of Chapter IV of \cite{ACGH} for details).    

The exceptional locus of $a_{r,d}$ is the subvariety $\mathfrak{I}^{r}_{d}(C)$ of $G^{r}_{d}(C)$ which parametrizes incomplete linear series.  

Since $G^{0}_{d}(C)$ is canonically isomorphic to $C_{d},$ we have that $$a_{0,d}:C_{d} \rightarrow W_{d}(C)$$ is the Abel map $a_{d}.$  The inverse image $a_{0,d}^{-1}(W^{r}_{d}(C)),$ which is denoted by $C^{r}_{d},$ parametrizes effective divisors of degree $d$ which move in a linear series of dimension at least $r.$  Note that the exceptional locus $\mathfrak{I}^{0}_{d}(C)$ of $a_{0,d}$ is simply $C^{1}_{d}.$  

If $C$ is a curve of genus $g$ and $2 \leq d \leq g-1,$ then there is an isomorphism $$\xymatrix{\tau : \textnormal{Pic}^{2g-2-d}(C) \ar[r]^-{\simeq} &\textnormal{Pic}^{d}(C)\\}$$ defined by $\tau(\mathcal{L})=K_{C} \otimes \mathcal{L}^{-1}.$  By the Riemann-Roch theorem, this restricts to an isomorphism $$\xymatrix{\tau : W^{g-d-1}_{2g-2-d}(C) \ar[r]^-{\simeq} &W_{d}(C)\\}.$$  This in turn lifts via $a_{0,d}$ and $a_{g-d-1,2g-2-d}$ to a birational map $$\widetilde{\tau} : G^{g-d-1}_{2g-2-d}(C) \dashrightarrow C_{d}.$$  Note that $\widetilde{\tau}$ is a biregular isomorphism precisely when $C^{1}_{d}=\emptyset.$  

The following theorem is due to Gieseker.  
\begin{thm} 
\label{gieseker}
(1.6 on p. 214 of \cite{ACGH}) Let $C$ be a general curve of genus $g.$  Let $d,r$ be integers satisfying $d \geq 1$ and $r \geq 0.$  Then $G^{r}_{d}(C)$ is smooth of dimension $g-(r+1)(g-d+r).$ 
\end{thm}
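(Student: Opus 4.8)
Here is how I would approach Theorem \ref{gieseker}.

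\emph{Strategy.} The space $G^{r}_{d}(C)$ is built, for every curve $C$, over the determinantal locus $W^{r}_{d}(C)\subseteq\mathrm{Pic}^{d}(C)$, whose components have codimension at most $(r+1)(g-d+r)$; from this construction one knows a priori that each component of $G^{r}_{d}(C)$ has dimension at least the Brill--Noether number $\rho:=g-(r+1)(g-d+r)$. So the theorem reduces to an upper bound: I would show that for a general curve $C$ one has $\dim T_{(L,V)}G^{r}_{d}(C)\le\rho$ at every point $(L,V)$, with the convention that this forces $G^{r}_{d}(C)=\emptyset$ when $\rho<0$. The whole point is that this tangent-space bound is governed by a single multiplication map — the Petri map — which is as well behaved as possible on a general curve.

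\emph{Tangent space computation.} The first step is the standard local analysis of $G^{r}_{d}$. At a point $(L,V)$ with $h^{0}(L)=r+1+s$ there is an exact sequence
\[
0 \longrightarrow \mathrm{Hom}\bigl(V,H^{0}(L)/V\bigr) \longrightarrow T_{(L,V)}G^{r}_{d}(C) \longrightarrow H^{1}(C,\mathcal{O}_{C}) \xrightarrow{\ \partial\ } \mathrm{Hom}\bigl(V,H^{1}(C,L)\bigr),
\]
with $\partial$ given by cup product. Dualizing, $\partial$ is the transpose of the multiplication map $\mu_{0,V}\colon V\otimes H^{0}(C,K_{C}\otimes L^{-1})\to H^{0}(C,K_{C})$, whence $\dim T_{(L,V)}G^{r}_{d}(C)=(r+1)s+g-\mathrm{rank}(\mu_{0,V})$. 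The key observation is that $\mu_{0,V}$ is the restriction to $V\otimes H^{0}(K_{C}\otimes L^{-1})$ of the full Petri map $\mu_{0}\colon H^{0}(L)\otimes H^{0}(K_{C}\otimes L^{-1})\to H^{0}(K_{C})$; so if $\mu_{0}$ is injective then so is $\mu_{0,V}$, and a short computation — in which the term $(r+1)s$ cancels exactly — gives $\dim T_{(L,V)}G^{r}_{d}(C)=\rho$. Together with the universal lower bound this yields smoothness, the dimension formula, and the emptiness assertion when $\rho<0$.

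\emph{Reduction to Gieseker--Petri and the degeneration.} It remains to prove that for a general curve $C$ the Petri map $\mu_{0}$ is injective for every line bundle $L$. I would argue by degeneration: injectivity of $\mu_{0}$, uniformly in $L$ over the relative $W^{r}_{d}$, is an open condition in families, so it suffices to exhibit one fiber — necessarily a suitable singular curve $C_{0}$ in a compactification of the moduli problem — for which the appropriate limiting version of the condition holds, and then invoke semicontinuity. One takes $C_{0}$ of compact type, e.g. a chain of $g$ elliptic curves attached at general points (the Eisenbud--Harris model) or Gieseker's original cuspidal/nodal degeneration, introduces limit linear series, and checks that the relevant Petri-type maps are injective. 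This verification is explicit because the components have genus $0$ or $1$, where line bundles, their cohomology, and the combinatorics of vanishing sequences are completely controlled.

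\emph{Main obstacle.} The tangent-space formula and the a priori dimension bound are formal and robust, and the incomplete-series locus $\mathfrak{I}^{r}_{d}(C)$ creates no extra difficulty because $\mu_{0,V}$ is treated uniformly as a restriction of $\mu_{0}$. The substantive part — and the step I expect to demand the most care — is the Gieseker--Petri theorem itself: selecting a degeneration whose limit linear series are adequately controlled, and organizing the bookkeeping of vanishing orders at the nodes so that injectivity of the limiting Petri maps can genuinely be verified. Everything outside that is essentially deformation theory and linear algebra.
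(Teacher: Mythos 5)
The paper offers no proof of this statement; it is quoted verbatim from [ACGH] (1.6, p.~214), and your outline --- the tangent-space exact sequence identifying $\partial$ with the transpose of the restricted Petri map $\mu_{0,V}$, the cancellation of the $(r+1)s$ terms against the determinantal lower bound, and the reduction to Gieseker--Petri proved by degeneration --- is precisely the argument of that cited source. Your sketch is correct in structure and correctly isolates the genuinely hard step (injectivity of $\mu_{0}$ for every $L$ on a general curve), so there is nothing to compare beyond noting that the paper simply takes the theorem as known.
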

It is a straightforward consequence of Theorem \ref{gieseker} that when $C$ is a general curve of genus $g$ the codimension of $C^{1}_{d}$ is $g-d+1$ and the codimension of $\mathfrak{I}^{g-d-1}_{2g-2-d}(C)$ is 2, so that $\widetilde{\tau}$ is an isomorphism of smooth varieties in codimension 1.    

Another consequence is that a general curve of genus $g$ is $\lceil{\frac{g}{2}+1}\rceil-$gonal.  For all such curves, then, $\widetilde{\tau}$ fails to be a biregular isomorphism whenever $\frac{g}{2}+1 \leq d \leq g-1.$    
\begin{prop}
\label{picard_isom}
Let $C$ be a general curve of genus $g \geq 3$ and let $d \geq 3$ be an integer satisfying $\frac{g}{2}+1 \leq d \leq g-1$.      
\begin{itemize}
\item[(i)]{$\widetilde{\tau}$ induces an isomorphism $$\xymatrix{
\widetilde{\tau}^{\ast} : \textnormal{Pic}(C_{d}) \ar[r]^-{\simeq} &\textnormal{Pic}(G^{g-d-1}_{2g-2-d}(C))\\}.$$}
\item[(ii)]{Let $\mathcal{L}$ be a line bundle on $C_{d}$ and $\mathcal{M}$ be a line bundle on $G^{g-d-1}_{2g-2-d}(C).$  Then 
\begin{align*}
H^{0}(C_{d},(\tau^{-1})^{\ast}\mathcal{M}) &\simeq H^{0}(G^{g-d-1}_{2g-2-d}(C),\mathcal{M})\\
H^{0}(G^{g-d-1}_{2g-2-d}(C),\tau^{\ast}\mathcal{L}) &\simeq H^{0}(C_{d},\mathcal{L})
\end{align*}}
\end{itemize}
\end{prop}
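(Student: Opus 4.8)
The plan is to derive both parts from the single geometric fact, recorded immediately after Theorem~\ref{gieseker}, that for $C$ general the birational map $\widetilde{\tau}$ is an isomorphism of \emph{smooth} projective varieties in codimension one. Once that is granted, the proposition is an instance of the general principle that neither the Picard group nor the space of sections of a line bundle on a smooth variety is changed by deleting a closed subset of codimension $\geq 2$. So the only substantive work is to pin down the open sets on which $\widetilde{\tau}$ is already biregular and to check that their complements have codimension at least $2$ on both sides.

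To that end I would write $X := G^{g-d-1}_{2g-2-d}(C)$, let $\mathfrak{I} := \mathfrak{I}^{g-d-1}_{2g-2-d}(C) \subseteq X$ be the locus of incomplete series, and recall $C^1_d \subseteq C_d$. Unwinding the construction of $\widetilde{\tau}$ out of $a_{g-d-1,2g-2-d}$, the Riemann--Roch isomorphism $\tau$ of Picard varieties, and $a_{0,d}$, one checks that $\widetilde{\tau}$ restricts to a biregular isomorphism $X \setminus \mathfrak{I} \xrightarrow{\ \sim\ } C_d \setminus C^1_d$. Indeed, on $X \setminus \mathfrak{I}$ every $g^{g-d-1}_{2g-2-d}$ is complete, hence determined by its line bundle, which lies in $W^{g-d-1}_{2g-2-d}(C) \setminus W^{g-d}_{2g-2-d}(C)$; the map $a_{g-d-1,2g-2-d}$ identifies $X \setminus \mathfrak{I}$ with that stratum (which is smooth for general $C$); $\tau$ carries it isomorphically onto $W_d(C) \setminus W^1_d(C)$ by Riemann--Roch; and $a_{0,d}$ identifies the latter with $C_d \setminus C^1_d$, a line bundle there having a unique effective representative. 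By Theorem~\ref{gieseker} — via the Brill--Noether dimension counts recalled before the proposition — one has $\operatorname{codim}_X \mathfrak{I} = 2$ and $\operatorname{codim}_{C_d} C^1_d = g-d+1$, and the hypothesis $d \leq g-1$ makes the latter $\geq 2$ as well.

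Part (i) then follows formally: for a smooth (hence locally factorial) variety $Z$ and a closed $W \subseteq Z$ of codimension $\geq 2$, restriction $\operatorname{Pic}(Z) \to \operatorname{Pic}(Z \setminus W)$ is an isomorphism — surjective because Weil divisor classes extend by taking closures and are Cartier by local factoriality, injective because no nonzero divisor is supported on $W$. Applying this to $(X,\mathfrak{I})$ and to $(C_d,C^1_d)$ and splicing in the biregular isomorphism of the previous paragraph yields
\[
\operatorname{Pic}(C_d) \xrightarrow{\ \sim\ } \operatorname{Pic}(C_d \setminus C^1_d) \xrightarrow{\ \sim\ } \operatorname{Pic}(X \setminus \mathfrak{I}) \xrightarrow{\ \sim\ } \operatorname{Pic}(X),
\]
whose composite is $\widetilde{\tau}^{\ast}$.

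For (ii) I would invoke the algebraic Hartogs principle: on a normal variety $Z$, a locally free sheaf $\mathcal{F}$ satisfies $H^0(Z,\mathcal{F}) = H^0(Z \setminus W,\mathcal{F})$ whenever $\operatorname{codim} W \geq 2$. Given $\mathcal{L}$ on $C_d$, set $\mathcal{M} := \widetilde{\tau}^{\ast}\mathcal{L}$ as in (i); by construction $\mathcal{L}$ and $\mathcal{M}$ agree on the common open set $C_d \setminus C^1_d \cong X \setminus \mathfrak{I}$, so
\[
H^0(X,\mathcal{M}) \cong H^0(X \setminus \mathfrak{I},\mathcal{M}) \cong H^0(C_d \setminus C^1_d,\mathcal{L}) \cong H^0(C_d,\mathcal{L}),
\]
the outer isomorphisms by Hartogs ($X$ and $C_d$ being smooth, hence normal) and the middle one by the biregular identification; reading this backwards, with $\mathcal{N} := (\widetilde{\tau}^{-1})^{\ast}\mathcal{M}$, gives the statement for $(\tau^{-1})^{\ast}\mathcal{M}$. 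The only delicate point I anticipate is the bookkeeping in the second paragraph — verifying that the exceptional loci of $\widetilde{\tau}$ are exactly $\mathfrak{I}$ and $C^1_d$ and that both are of codimension $\geq 2$ — but this is precisely what Gieseker's Theorem provides for general $C$, after which everything is routine.
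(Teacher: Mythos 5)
Your proposal is correct and follows essentially the same route as the paper: restrict to the complementary open sets $C_{d}\setminus C^{1}_{d}$ and $G^{g-d-1}_{2g-2-d}(C)\setminus\mathfrak{I}^{g-d-1}_{2g-2-d}(C)$, where $\widetilde{\tau}$ is biregular, note via Gieseker's Theorem that both removed loci have codimension at least $2$ for general $C$, and extend by the algebraic Hartogs principle. You merely spell out more explicitly the Picard-group step for (i) (restriction across a codimension-$\geq 2$ subset of a smooth variety is an isomorphism), which the paper leaves implicit.
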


\begin{proof}
Let $U=C_{d}-C^{1}_{d}$ and $V=G^{g-d-1}_{2g-2-d}(C)-\mathfrak{I}^{g-d-1}_{2g-2-d}(C).$  Clearly ${\tau}|_{V}:V \rightarrow U$ is an isomorphism and ${\tau}^{-1}|_{U}:U \rightarrow V$ is its inverse.  These furnish natural isomorphisms $$H^{0}(U,\mathcal{L}|_{U})\simeq H^{0}(V,{\tau}^{\ast}\mathcal{L}|_{V})$$ $$H^{0}(V,\mathcal{M}|_{V}) \simeq H^{0}(U,({\tau}^{-1})^{\ast}\mathcal{M}|_{U})$$  

Since $C$ is general, it follows from Hartogs' theorem that these isomorphisms extend to all of $C_{d}$ and $G^{g-d-1}_{2g-2-d}(C).$
\end{proof}

\textbf{Remark:}  It follows from the first part of this proof that for any curve $C$ the singular locus of $G^{g-d-1}_{2g-2-d}(C)$ is contained in $\mathfrak{I}^{g-d-1}_{2g-2-d}(C).$

\medskip

\textbf{Remark:}  It is possible for $\mathfrak{I}^{g-d-1}_{2g-2-d}(C)$ to be of codimension 1 in $G^{g-d-1}_{2g-2-d}(C)$ even if $C^{1}_{d}$ is of codimension 2 in $C_{d}.$  For example, if $C$ is a nonhyperelliptic trigonal curve of genus 5, then $C^{1}_{3} \simeq \mathbb{P}^{1}$ and $\mathfrak{I}^{1}_{5}(C) \simeq \mathbb{P}^{2}.$

\medskip

The following corollary is immediate.

\begin{cor}
$\widetilde{\tau}$ induces an isomorphism $$\xymatrix{
\widetilde{\tau}^{\ast} : N^{1}_{\mathbb{R}}(C_{d}) \ar[r]^-{\simeq} &N^{1}_{\mathbb{R}}(G^{g-d-1}_{2g-2-d}(C))\\}$$ of N\'{e}ron-Severi spaces. \hfill \qedsymbol
\end{cor}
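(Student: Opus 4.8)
The plan is to upgrade the Picard isomorphism of Proposition \ref{picard_isom}(i) to an isomorphism of Néron–Severi groups by checking that $\widetilde{\tau}^{\ast}$ respects algebraic equivalence, and then to tensor with $\mathbb{R}$. Recall that for a smooth projective variety $X$ one has $N^{1}_{\mathbb{R}}(X)=NS(X)\otimes_{\mathbb{Z}}\mathbb{R}$ with $NS(X)=\textnormal{Pic}(X)/\textnormal{Pic}^{0}(X)$, so it is enough to show that the group isomorphism $\widetilde{\tau}^{\ast}\colon \textnormal{Pic}(C_{d})\xrightarrow{\ \simeq\ }\textnormal{Pic}(G^{g-d-1}_{2g-2-d}(C))$ carries $\textnormal{Pic}^{0}(C_{d})$ onto $\textnormal{Pic}^{0}(G^{g-d-1}_{2g-2-d}(C))$.

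First I would recall the construction of $\widetilde{\tau}^{\ast}$ from the proof of Proposition \ref{picard_isom}: with $U=C_{d}-C^{1}_{d}$ and $V=G^{g-d-1}_{2g-2-d}(C)-\mathfrak{I}^{g-d-1}_{2g-2-d}(C)$, it factors as the composition of the restriction isomorphisms $\textnormal{Pic}(C_{d})\simeq\textnormal{Pic}(U)$ and $\textnormal{Pic}(V)\simeq\textnormal{Pic}(G^{g-d-1}_{2g-2-d}(C))$ — both isomorphisms because the ambient varieties are smooth and the deleted loci have codimension $\geq 2$ by Theorem \ref{gieseker} — with the isomorphism $\textnormal{Pic}(U)\simeq\textnormal{Pic}(V)$ induced by the biregular map $\tau^{-1}|_{U}\colon U\xrightarrow{\ \simeq\ }V$. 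The middle map obviously transports algebraically trivial families, so the whole question reduces to showing that each restriction isomorphism identifies $\textnormal{Pic}^{0}$ of the smooth projective variety with the classes on its big open subset that underlie an algebraically trivial family. Concretely: if $L\in\textnormal{Pic}^{0}(C_{d})$, witnessed by a line bundle $\mathcal{M}$ on $C_{d}\times T$ ($T$ a smooth connected base, $\mathcal{M}_{t_{0}}\cong\mathcal{O}_{C_{d}}$, $\mathcal{M}_{t_{1}}\cong L$), then restricting to $U\times T$ exhibits $L|_{U}$ as algebraically trivial; conversely, since $C_{d}\times T$ is smooth and $(C_{d}-U)\times T$ has codimension $\geq 2$, any line bundle on $U\times T$ extends uniquely to $C_{d}\times T$ — the same Hartogs argument used in Proposition \ref{picard_isom} — so an algebraically trivial family for $L|_{U}$ spreads back out to one for $L$. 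Running the identical argument for $G^{g-d-1}_{2g-2-d}(C)$ and $V$ and splicing, one gets $L\in\textnormal{Pic}^{0}(C_{d})$ if and only if $\widetilde{\tau}^{\ast}L\in\textnormal{Pic}^{0}(G^{g-d-1}_{2g-2-d}(C))$; hence $\widetilde{\tau}^{\ast}$ descends to an isomorphism $NS(C_{d})\xrightarrow{\ \simeq\ }NS(G^{g-d-1}_{2g-2-d}(C))$, and tensoring with $\mathbb{R}$ completes the proof.

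The only step carrying any content — and the natural thing for the word ``immediate'' to be absorbing — is the extension of families of line bundles across the exceptional loci $C^{1}_{d}$ and $\mathfrak{I}^{g-d-1}_{2g-2-d}(C)$; everything else is formal once Proposition \ref{picard_isom} is available. I do not expect an essential difficulty here, since both loci have codimension at least $2$ for the general curve $C$ by Theorem \ref{gieseker}, and the required extension is exactly the Hartogs-type statement already exploited in the proof of Proposition \ref{picard_isom}.
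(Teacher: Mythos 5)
Your proposal is correct and takes essentially the same route as the paper, which simply declares the corollary ``immediate'' from Proposition \ref{picard_isom}: what you have written out is exactly the implicit content of that word, namely that the restriction isomorphisms across the codimension-$\geq 2$ loci $C^{1}_{d}$ and $\mathfrak{I}^{g-d-1}_{2g-2-d}(C)$ and the biregular identification $U\simeq V$ all respect algebraic equivalence (via the same Hartogs-type extension applied to families over $C_{d}\times T$ and $G^{g-d-1}_{2g-2-d}(C)\times T$), so the Picard isomorphism descends to $NS$ and hence to $N^{1}_{\mathbb{R}}$.
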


Our next task is to determine $\widetilde{\tau}^{\ast}$ explicitly.  The divisor classes $x$ and $\theta$ on $C_{d}$ can be generalized in a straightforward fashion to obtain divisor classes on $G^{g-d-1}_{2g-2-d}(C).$ 

Recall that a vector bundle $E$ is ample if the hyperplane class on the \textit{subbundle} projectivization $\mathbb{P}_{sub}(E^{\ast})$ of $E^{\ast}$ is ample.

\begin{lem}
\label{det_ample}
Let $X$ be a smooth projective variety, and let $E$ be an ample vector bundle of rank $s$ on $X.$  Then for all $s' \leq s,$ the Pl\"{u}cker class on the associated Grassmann bundle $G(s',E^{\ast})$ of rank-$s'$ subbundles of $E^{\ast}$ is ample.
\end{lem}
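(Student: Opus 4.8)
The plan is to exhibit the Pl\"ucker class on $G(s',E^{\ast})$ as the pullback, under the relative Pl\"ucker embedding, of the hyperplane class on a projective bundle built from $\wedge^{s'}E$, and then to invoke the standard fact that exterior powers of ample vector bundles are ample.

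First I would fix notation. Let $\pi\colon G(s',E^{\ast})\to X$ be the structure morphism and let $0\to\mathcal{S}\to\pi^{\ast}E^{\ast}\to\mathcal{Q}\to 0$ be the tautological sequence, with $\mathcal{S}$ of rank $s'$; by definition the Pl\"ucker class is $c_{1}(\det\mathcal{S}^{\ast})=c_{1}\big((\wedge^{s'}\mathcal{S})^{\vee}\big)$. Applying $\wedge^{s'}$ fibrewise to the inclusion $\mathcal{S}\hookrightarrow\pi^{\ast}E^{\ast}$ yields, in the usual way, a closed immersion of $X$-schemes
$$j\colon G(s',E^{\ast})\hookrightarrow\mathbb{P}_{sub}(\wedge^{s'}E^{\ast})$$
(the relative Pl\"ucker embedding), under which the tautological rank-one subbundle of the pullback of $\wedge^{s'}E^{\ast}$ restricts to $\wedge^{s'}\mathcal{S}$. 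Hence $j$ carries the hyperplane class on $\mathbb{P}_{sub}(\wedge^{s'}E^{\ast})$ --- that is, $c_{1}$ of the dual of that tautological line subbundle --- to the Pl\"ucker class on $G(s',E^{\ast})$.

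Next I would observe that $\wedge^{s'}E$ is ample: it is a quotient of $E^{\otimes s'}$, the tensor power $E^{\otimes s'}$ is ample because tensor products of ample bundles are ample, and quotients of ample bundles are ample (Section 6.2 of \cite{Laz2}). Now $(\wedge^{s'}E)^{\ast}=\wedge^{s'}(E^{\ast})$, so $\mathbb{P}_{sub}\big((\wedge^{s'}E)^{\ast}\big)=\mathbb{P}_{sub}(\wedge^{s'}E^{\ast})$, and by the definition of an ample vector bundle recalled just above the Lemma the hyperplane class on this projective bundle is an ample line bundle. Since $j$ is a closed immersion, the restriction to $G(s',E^{\ast})$ of an ample line bundle on $\mathbb{P}_{sub}(\wedge^{s'}E^{\ast})$ is again ample; by the identification of the previous paragraph this restriction is the Pl\"ucker class, which is therefore ample.

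The substantive input is the theorem that tensor powers of ample bundles are ample (whence so are exterior powers, being quotients of tensor powers), which I would simply cite. The only step requiring real care is the bookkeeping with projectivization conventions: one must track $\mathbb{P}_{sub}$ against the Grothendieck convention and the intervening dualizations closely enough to be sure that the tautological line bundle on $\mathbb{P}_{sub}(\wedge^{s'}E^{\ast})$ matches $\det\mathcal{S}$ and that the hyperplane class there matches the Pl\"ucker class on $G(s',E^{\ast})$ --- after which the conclusion is just ``the restriction of an ample line bundle to a closed subvariety is ample.''
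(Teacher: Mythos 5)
Your proposal is correct and follows essentially the same route as the paper: both arguments realize the Pl\"ucker class as the restriction, under the relative Pl\"ucker embedding $G(s',E^{\ast})\hookrightarrow\mathbb{P}_{sub}(\wedge^{s'}E^{\ast})$, of the hyperplane class, and then invoke the amplitude of exterior powers of an ample bundle (the paper cites part (ii) of Corollary 6.1.16 in \cite{Laz2} directly, whereas you derive it from tensor powers and quotients --- the same underlying fact). The only difference is that you spell out the convention bookkeeping more explicitly, which the paper leaves implicit.
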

\begin{proof}
If $\nu:G(s',E^{\ast}) \rightarrow X$ is the structure map, then the determinant of the inclusion $S_{s',E^{\ast}} \hookrightarrow \nu^{\ast}(E)$ of the tautological subbundle induces the Pl\"{u}cker embedding $G(s',E^{\ast}) \hookrightarrow \mathbb{P}_{sub}({\wedge}^{s'}E^{\ast}).$  The result then follows from the fact that the amplitude of $E$ implies the amplitude of its exterior powers (part (ii) of Corollary 6.1.16 on p.15 of \cite{Laz2}).  
\end{proof}

While we feel that the following result should be well known, we include its proof for lack of a reference.

\begin{prop}
\label{plucker_ample}
For any effective divisor $D$ of degree $r+1$ on $C,$ the set $$\widehat{X}_{D}:=\{(V,\mathcal{M}) \in G^{r}_{d}(C):V \cap H^{0}(\mathcal{M}(-D)) \neq 0\}$$ has the natural structure of an ample divisor on $G^{r}_{d}(C).$
\end{prop}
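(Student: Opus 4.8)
The plan is to realize $\widehat{X}_D$ as a degeneracy locus of a bundle map on $G^r_d(C)$ that is pulled back in a controlled way from a Grassmann bundle over $C_d$ (or over $\mathrm{Pic}^d(C)$), and then to recognize that degeneracy locus as the pullback of the Pl\"ucker hyperplane class under a morphism to an appropriate projective space, so that ampleness follows from Lemma~\ref{det_ample} and Lemma~\ref{plucker_ample}'s style of argument via Corollary 6.1.16 of \cite{Laz2}. Concretely, on $C_d$ we have the rank-$d$ bundle $E_{\mathcal L}$ whose fibre over $D' \in C_d$ is $H^0(D', \mathcal L|_{D'})$; the analogous object here is the restriction-to-$D$ map. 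Fix the effective divisor $D$ of degree $r+1$. Over $G^r_d(C)$ one has the tautological rank-$(r+1)$ subbundle $\mathcal S \hookrightarrow \pi^\ast \mathcal E$, where $\pi: G^r_d(C) \to \mathrm{Pic}^d(C)$ and $\mathcal E$ is a Poincar\'e-type bundle pushforward whose fibre at $(V,\mathcal M)$ contains $V \subseteq H^0(\mathcal M)$. Composing the inclusion $\mathcal S \hookrightarrow \pi^\ast\mathcal E$ with the evaluation/restriction map $\pi^\ast\mathcal E \to \mathcal F_D$, where $\mathcal F_D$ is the rank-$(r+1)$ bundle with fibre $H^0(\mathcal M|_D) = H^0(\mathcal M \otimes \mathcal O_D)$, gives a map $\varphi_D: \mathcal S \to \mathcal F_D$ of vector bundles of the same rank $r+1$. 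Its determinant is a section of $\det \mathcal F_D \otimes (\det \mathcal S)^{-1}$, and its zero locus is exactly $\widehat X_D = \{(V,\mathcal M): V \cap H^0(\mathcal M(-D)) \neq 0\}$. So as a divisor class, $[\widehat X_D] = c_1(\det \mathcal F_D) - c_1(\det \mathcal S)$.

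First I would make the identification of $\widehat X_D$ with $\{\det \varphi_D = 0\}$ precise and check it is a genuine (Cartier) divisor, not all of $G^r_d(C)$: a general $(V,\mathcal M)$ with $\deg \mathcal M = d \geq r+1$ has $V \cap H^0(\mathcal M(-D)) = 0$ for general $D$ of degree $r+1$, since imposing $D$ cuts down $V$ by the expected $r+1$ conditions; one can exhibit a single such point, or invoke that $\widehat X_D$ is pulled back from the corresponding subordinate-type locus which is a proper subvariety. Second, $\mathcal F_D$ is torsion-free of rank $r+1$ supported on all of $G^r_d(C)$ and in fact locally free since $D$ is a fixed effective divisor and $\mathcal O_D$ is flat of length $r+1$ over the base; its determinant is (the pullback of) a fixed line bundle on $\mathrm{Pic}^d(C)$ twisted by $\mathcal O(D)$-type data, and in particular $c_1(\mathcal F_D)$ is nef and its contribution is "small." The key point is that $-c_1(\det \mathcal S) = c_1(\det \mathcal S^\vee)$ is precisely the Pl\"ucker class on the Grassmann bundle $G(r+1, \mathcal E)$ (restricted to $G^r_d(C)$, which sits inside it), and by Lemma~\ref{det_ample} this Pl\"ucker class is ample as soon as $\mathcal E^\vee$-dual data is ample — here $\mathcal E$ plays the role of the ample bundle. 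Thus I would show $\mathcal E$ (suitably normalized) is ample, deduce $\det \mathcal S^\vee$ is ample by Lemma~\ref{det_ample}, and then argue that $[\widehat X_D] = c_1(\det \mathcal S^\vee) + (\text{nef class from } \mathcal F_D)$ is ample, being the sum of an ample and a nef class.

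The cleanest route — and probably the one I would actually write — is to reduce directly to Lemma~\ref{det_ample} by choosing the right ample bundle. Note $\widehat X_D$ for $D$ of degree $r+1$ is, via the structure maps, the proper transform of an analogue of a subordinate locus, and on $C_d$ the ample class $x$ arises exactly this way (as $\Gamma_{d}$ of a degree-one condition, i.e. $i_p(C_{d-1})$). So the natural statement is: there is an ample vector bundle $\mathcal{A}$ of the appropriate rank on $G^r_d(C)$, namely $\mathcal{E}$ twisted so that its fibrewise evaluation at a point of $C$ is positive, such that $\widehat X_D$ is cut out inside $G(r+1,\mathcal A)$-type geometry by the Pl\"ucker embedding composed with a linear projection, pulling back $\mathcal O(1)$. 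Since $G^r_d(C) \hookrightarrow G(r+1,\mathcal A)$ and the Pl\"ucker class restricts to an ample class there, and $\widehat X_D$ is the hyperplane section corresponding to the linear form "vanishing on $H^0(\mathcal M(-D))$," the class $[\widehat X_D]$ equals the restricted Pl\"ucker class (up to the nef twist), hence is ample.

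The main obstacle I anticipate is \textbf{getting the bundle bookkeeping exactly right}: identifying the precise Poincar\'e bundle / pushforward $\mathcal E$ so that (a) its rank is at least $r+1$ and the tautological subbundle of $G^r_d(C)$ genuinely embeds it into a Grassmann bundle, (b) the twist needed to make $\mathcal E$ ample does not spoil the identification of $\det \mathcal S^\vee$ with the Pl\"ucker class, and (c) the "error term" $c_1(\mathcal F_D)$ is genuinely nef (or at least that the total class stays ample) — this requires knowing $\det \mathcal F_D$ is the pullback of an effective/nef class from $\mathrm{Pic}^d(C)$, which should follow since $\mathcal F_D$ has a filtration with graded pieces pulled back from $\mathrm{Pic}^d(C)$ twisted by points of $C$. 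A secondary subtlety is that $G^r_d(C)$ may be singular; but $\widehat X_D$ is still a Cartier divisor (it is the vanishing of a section of a line bundle) and ampleness is tested on the (possibly singular) projective variety just as in Lemma~\ref{det_ample}, so this causes no real trouble. I would organize the write-up as: (1) construct $\varphi_D$ and identify $\widehat X_D = Z(\det\varphi_D)$; (2) compute $[\widehat X_D] = c_1(\det\mathcal S^\vee) + c_1(\pi^\ast(\text{line bundle}))$; (3) invoke Lemma~\ref{det_ample} with an explicitly ample $\mathcal E$ to get the first summand ample and note the second is nef; (4) conclude.
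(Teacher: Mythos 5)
Your plan is essentially the paper's own proof: the paper likewise embeds $G^{r}_{d}(C)$ in the Grassmann bundle $G(r+1,\eta_{\ast}\mathcal{L}(\Gamma))$ from the ACGH construction (twisting by a fixed auxiliary divisor $D+D'$ so that the relevant pushforwards are locally free), identifies $\widehat{X}_{D}$ via a diagram chase with the restriction to $G^{r}_{d}(C)$ of the Schubert divisor of $(r+1)$-planes meeting the rank-$g$ subbundle $\eta_{\ast}\mathcal{L}(\Gamma')$ --- which is exactly your degeneracy locus of $\mathcal{S}\to\mathcal{F}_{D}$ --- and concludes from Lemma \ref{det_ample} together with the ampleness of $(\eta_{\ast}\mathcal{L}(\Gamma))^{\ast}$ (Proposition 2.2 on p.\ 310 of \cite{ACGH}). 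The only points to tighten in your write-up are the dualization (it is $\mathcal{E}^{\vee}$, not $\mathcal{E}$, that must be ample for the Pl\"{u}cker class $c_{1}(\det\mathcal{S}^{\vee})$ to be ample) and the correction term, which you should justify by observing that $\det\mathcal{F}_{D}$ is numerically \emph{trivial} --- its graded pieces are restrictions of the twisted Poincar\'{e} bundle to slices $\{q\}\times\textnormal{Pic}^{d}(C)$, which are algebraically trivial --- rather than by an unsubstantiated appeal to nefness of a pullback from $\textnormal{Pic}^{d}(C)$.
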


\begin{proof}
We first recall some aspects of the construction of $G^{r}_{d}(C)$ in Section 3 of Chapter IV of \cite{ACGH}.  Fix a Poincar\'{e} bundle $\mathcal{L}$ on $C \times \textrm{Pic}^{d}(C),$ and let $D'$ be an effective divisor on $C$ of degree $2g-d-1.$  

If $\eta:C \times \textrm{Pic}^{d}(C) \rightarrow \textrm{Pic}^{d}(C)$ is projection onto the second factor, and $\Gamma$ is the product divisor $(D+D') \times \textrm{Pic}^{d}(C),$ then the direct image sheaf $\eta_{\ast}\mathcal{L}(\Gamma)$ is locally free of rank $g+r+1$ and its fibre over a line bundle $\mathcal{M}$ of degree $d$ on $C$ is $H^{0}(\mathcal{M}(D+D')).$  Indeed, $h^{1}(\mathcal{M}(D+D'))=0$ by Serre duality since the degree of $\mathcal{M}(D+D')$ is $2g+r$, so this follows from Riemann-Roch and base change in cohomology.   

If ${\Gamma}'$ is the product divisor $D' \times \textrm{Pic}^{d}(C),$ then an entirely analogous argument tells us that $\eta_{\ast}\mathcal{L}({\Gamma}')$ is a rank-$g$ subbundle of $\eta_{\ast}\mathcal{L}(\Gamma).$  Since the dual of $\eta_{\ast}\mathcal{L}(\Gamma)$ is ample by Proposition 2.2 on p.310 of \cite{ACGH}, the Pl\"{u}cker divisor ${\sigma}'$ on the Grassmann bundle $G(r+1,\eta_{\ast}\mathcal{L}(\Gamma))$ associated to $\eta_{\ast}\mathcal{L}({\Gamma}')$ is ample by Lemma \ref{det_ample}. 

For each line bundle $\mathcal{M}$ of degree $d$ on $C,$ there is a commutative diagram
$$\xymatrix{
0 \ar[r] &H^{0}(\mathcal{M}) \ar[r]^{f_{1}} &H^{0}(\mathcal{M}(D+D')) \ar[r]^{g_{1}} &H^{0}(\mathcal{M}(D+D')|_{D+D'})\\
0 \ar[r] &H^{0}(\mathcal{M}(-D)) \ar[u]^{i} \ar[r]^{f_{2}} &H^{0}(\mathcal{M}(D')) \ar[u]^{i'} \ar[r]^{g_{2}} &H^{0}(\mathcal{M}(D')|_{D+D'}) \ar[u]^{i''}}$$
in which both rows are exact and all vertical arrows are injective, so that a diagram chase gives the equality $$(f_{1} \circ i)\bigl(H^{0}(\mathcal{M}(-D))\bigr)=f_{1}\bigl(H^{0}(\mathcal{M})\bigr) \cap i'\bigl(H^{0}(\mathcal{M}(D'))\bigr).$$  

Therefore if $V$ is a subspace of $H^{0}(\mathcal{M}),$ we have $$f_{1}(V) \cap (f_{1} \circ i)\bigl(H^{0}(\mathcal{M}(-D))\bigr)=f_{1}(V) \cap i'\bigl(H^{0}(\mathcal{M}(D'))\bigr).$$  

It follows at once from the definitions that $\widehat{X}_{D}=G^{r}_{d}(C) \cap {\sigma}'.$  Consequently $\widehat{X}_{D},$ which has a cycle structure induced by its being an intersection of cycles and is the restriction of an ample divisor, is ample.
\end{proof} 

As $D$ varies over $C_{r+1},$ the divisors $\widehat{X}_{D}$ sweep out an algebraic family whose common numerical class we will denote by $\widehat{X}.$  Also, we will denote by $\widehat{\theta}$ the numerical class of the pullback to $G^{r}_{d}(C)$ of a theta-divisor on $\textrm{Pic}^{d}(C).$    

\begin{prop}
\label{explicit_isom}
Under the isomorphism ${\widetilde{\tau}}^{\ast}:N^{1}_{\mathbb{R}}(C_{d}) \rightarrow N^{1}_{\mathbb{R}}(G^{g-d-1}_{2g-2-d}(C)),$ $${\widetilde{\tau}}^{\ast}(\theta)=\widehat{\theta}, \hspace{0.2cm} {\widetilde{\tau}}^{\ast}(\theta-x)=\widehat{X}.$$
\end{prop}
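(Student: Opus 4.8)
The plan is to verify each of the two identities by exhibiting a specific divisor on $G^{g-d-1}_{2g-2-d}(C)$ of the asserted class which is the strict transform under $\widetilde{\tau}$ of a specific divisor on $C_d$ of the matching class, and then invoke the fact that $\widetilde{\tau}^\ast$ is an isomorphism of N\'eron--Severi spaces (the Corollary following Proposition \ref{picard_isom}) which is computed correctly on any single divisor class once we know it set-theoretically on the complement of the exceptional loci. Since $\widetilde{\tau}$ restricts to a biregular isomorphism $V \to U$ between $G^{g-d-1}_{2g-2-d}(C) \setminus \mathfrak{I}^{g-d-1}_{2g-2-d}(C)$ and $C_d \setminus C^1_d$, and both exceptional loci have codimension $\geq 2$ for $C$ general, it suffices to match, for a suitable effective divisor, the restrictions of the relevant divisors to $U$ and $V$; their closures then carry the same N\'eron--Severi class.

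First I would treat $\widetilde{\tau}^\ast(\theta) = \widehat{\theta}$. This should be essentially formal: by definition $\theta$ is the pullback to $C_d$ of a theta-divisor on $\mathrm{Pic}^d(C)$ via $a_{0,d}$, and $\widehat\theta$ is the pullback to $G^{g-d-1}_{2g-2-d}(C)$ of a theta-divisor on $\mathrm{Pic}^d(C)$ via $a_{g-d-1,2g-2-d}$ composed with the translation isomorphism $\tau$. The construction of $\widetilde\tau$ in Section \ref{resid} is precisely arranged so that $a_{0,d}\circ\widetilde\tau = \tau\circ a_{g-d-1,2g-2-d}$ on the locus where $\widetilde\tau$ is defined. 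Since a translate of the theta divisor on $\mathrm{Pic}^{2g-2-d}(C)$ pulls back under $\tau$ to a translate of the theta divisor on $\mathrm{Pic}^d(C)$ (inversion and translation preserve the algebraic-equivalence class of $\Theta$), the two pullbacks agree up to algebraic equivalence, giving the first identity.

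Second, for $\widetilde{\tau}^\ast(\theta - x) = \widehat{X}$, I would choose a point $p\in C$, take the effective divisor $(g-d)p + q$ of degree $r+1 = g-d+1$ on $C$ (or more simply a general effective divisor $D_0$ of degree $g-d+1$), and compare $\widehat{X}_{D_0} \subset G^{g-d-1}_{2g-2-d}(C)$ with its image under $\widetilde\tau$. A linear series $(V,\mathcal M)$ with $\mathcal M\in W^{g-d-1}_{2g-2-d}(C)$ lies in $\widehat X_{D_0}$ exactly when $H^0(\mathcal M(-D_0))\neq 0$, i.e.\ when $|\mathcal M|$ is subordinate to... equivalently, passing through $\tau$, the residual divisor $E\in C_d$ with $\mathcal O_C(E) = K_C\otimes\mathcal M^{-1}$ satisfies $H^0(K_C\otimes\mathcal O_C(-E)\otimes\mathcal O_C(-D_0))\neq 0$; by Riemann--Roch / duality this says $h^0(\mathcal O_C(E+D_0)) > h^0(\mathcal O_C(E))$, which is precisely the condition that $E$ be subordinate to the complete linear series attached to $D_0$, i.e.\ $\widetilde\tau(\widehat X_{D_0}) = \Gamma_d(K_C(-D_0))\cap U$ up to the exceptional locus. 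Taking $D_0 = K_C$-type data of degree $g-d+1$ and applying Lemma \ref{subordinate} with $n = 2g-2-(g-d+1) = g+d-3$... more directly, $\Gamma_d(K_C(-D_0))$ with $\deg(K_C(-D_0)) = 2g-2-(g-d+1) = g+d-3 \geq d$ and $r=0$ gives by Lemma \ref{subordinate} a divisor of class $\sum_{k=0}^{d}\binom{g+d-3-g}{k}\frac{x^k\theta^{d-k}}{(d-k)!} = \sum_{k}\binom{d-3}{k}\frac{x^k\theta^{d-k}}{(d-k)!}$; I would instead note that for the class computation it is cleaner to use $\Gamma_{g-1}(K_C(-p))$ in the already-cited Introduction identity (class $\theta - x$) and pull back, so the cleanest route is: $\widetilde\tau$ sends (the strict transform of) $\Gamma_d(K_C - D_0)$ to $\widehat X_{D_0}$, hence $\widetilde\tau^\ast(\theta - x) = \widetilde\tau^\ast([\Gamma_d(K_C-D_0)]) = \widehat X$.

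The main obstacle I anticipate is the set-theoretic identification in the second part: carefully checking, via the commutative diagram of restriction maps analogous to the one in the proof of Proposition \ref{plucker_ample}, that ``$(V,\mathcal M)$ subordinate to $D_0$'' corresponds under residuation exactly to ``$E$ subordinate to $|K_C(-D_0)|$'', including getting the degree bookkeeping right ($\deg D_0 = g-d+1 = r+1$ with $r = g-d-1$... note $r+1 = g-d$, so one must be careful whether $D_0$ has degree $g-d$ or $g-d+1$) and confirming that the generic point of $\widehat X_{D_0}$ lies in $V$ so that the strict transform really is $\overline{\widetilde\tau(\widehat X_{D_0}\cap V)}$ with no extra components supported on $\mathfrak I^{g-d-1}_{2g-2-d}(C)$. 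Once that dictionary is nailed down, everything else is either the formal theta-pullback argument or an appeal to Lemma \ref{subordinate} together with the Hartogs-extension isomorphism $\widetilde\tau^\ast$ from Proposition \ref{picard_isom}.
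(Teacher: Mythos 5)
Your overall strategy is the paper's: handle $\theta$ via compatibility of residuation with theta divisors on the Picard varieties, and handle $\theta-x$ by identifying $\widehat{X}_{D_0}$ with $\Gamma_d(K_C(-D_0))$ away from the exceptional loci and then computing the class of the latter with Lemma \ref{subordinate}. The first identity is fine (the paper phrases it via a commutative diagram through $\mathrm{Pic}^0(C)$ and the fact that inversion acts trivially on the N\'eron--Severi group, which is the same content as your remark that inversion and translation preserve the class of $\Theta$).

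The gap is the degree bookkeeping in the second identity, which you flag but do not resolve, and your fallback computation is wrong. The definition of $\widehat{X}_{D}$ requires $\deg D = r+1$ with $r = g-d-1$, so $\deg D_0 = g-d$, not $g-d+1$. With the correct degree, $K_C(-D_0)$ has degree $n = g+d-2$ and (for $D_0$ with $h^0(\mathcal{O}_C(D_0))=1$) $\dim|K_C(-D_0)| = d-1$ by Riemann--Roch, so Lemma \ref{subordinate} applies with $r = d-1$ and gives class $\binom{-1}{0}\theta + \binom{-1}{1}x = \theta - x$; the set-theoretic dictionary is then the symmetric statement that $D_0 + E \leq |K_C|$, i.e.\ $E \in \Gamma_d(K_C(-D_0))$ iff $D_0$ is subordinate to $|K_C(-E)|$. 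Your attempted computation with $\deg D_0 = g-d+1$, $n = g+d-3$ and $r=0$ cannot work: by Lemma \ref{subordinate} the cycle $\Gamma_d(\mathcal{L},V)$ is $r$-dimensional, so with $r=0$ (or with the actual value $r = d-2$ for that degree) it is not a divisor in $C_d$ at all. Likewise, retreating to the Introduction's example $\Gamma_{g-1}(K_C(-p))$ only covers $d = g-1$, whereas the proposition is needed for all $d$ in the stated range. Once you fix $\deg D_0 = g-d$, your argument coincides with the paper's.
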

\begin{proof}
If $\widehat{\tau}: \textrm{Pic}^{2g-2-d}(C) \rightarrow \textrm{Pic}^{d}(C)$ is the morphism induced by taking Serre duals and $\mathcal{L}_{1}$ and $\mathcal{L}_{2}$ are line bundles on $C$ of respective degrees $2g-2-d$ and $d$ satisfying $\mathcal{L}_{1} \otimes \mathcal{L}_{2} \simeq K_{C},$ we have the commutative diagram
$$\xymatrix{
\textrm{Pic}^{0}(C) \ar[d]^{(-1)} \ar[r]^{\cdot \otimes \mathcal{L}_{1}} &\textrm{Pic}^{2g-2-d}(C) \ar[d]^{\widehat{\tau}} &G^{g-d-1}_{2g-2-d}(C) \ar[l] \ar@{-->}[d]^{\tau}\\
\textrm{Pic}^{0}(C) \ar[r]^{\cdot \otimes \mathcal{L}_{2}} &\textrm{Pic}^{d}(C) &C_{d} \ar[l]}$$ where the leftmost vertical arrow is multiplication by $-1$ and the left horizontal arrows on the top and bottom are multiplication by $\mathcal{L}_{1}$ and $\mathcal{L}_{2},$ respectively.  Since multiplication by $-1$ induces the identity on the cohomology of $\textrm{Pic}^{0}(C)$, it induces the identity on the N\'{e}ron-Severi group of $\textrm{Pic}^{0}(C).$  Therefore ${\tau}^{\ast}$ takes the theta class on $C_{d}$ to $\widehat{\theta}$ on $G^{g-d-1}_{2g-2-d}(C).$ 

If $D$ is an effective divisor of degree $g-d$ on $C,$ then it follows immediately from Riemann-Roch that $D' \in C_{d}$ satisfying $h^{0}(D')=1$ is subordinate to $|K_{C}(-D)|$ precisely when $D$ is subordinate to $|K_{C}(-D')|.$  This can be rephrased as saying that away from the loci of indeterminacy of $\tau$ and ${\tau}^{-1},$ the divisor $\widehat{X}_{D}$ is isomorphic to ${\Gamma}_{d}(K_{C}(-D))$ via $\tau.$  Since the fundamental class of ${\Gamma}_{d}(K_{C}(-D))$ is $\theta-x$ by Lemma \ref{subordinate}, we have that $\tau^{\ast}(\theta-x)=\widehat{X}.$
\end{proof}   

Since $\widetilde{\tau}$ is a biregular isomorphism if $C^{1}_{d}=\emptyset$, an immediate application of Proposition \ref{explicit_isom} yields the following result.  Note that $C$ is not assumed to be general.

\begin{cor}
\label{subnef}
If $C$ is any curve and $d$ is a positive integer for which $C^{1}_{d}=\emptyset,$ then the class $\theta-x$ on $C_{d}$ is ample. \hfill \qedsymbol
\end{cor}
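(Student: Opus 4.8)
The statement to prove is Corollary~\ref{subnef}: if $C$ is any curve and $d$ is a positive integer for which $C^{1}_{d}=\emptyset$, then the class $\theta-x$ on $C_{d}$ is ample.

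\medskip

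The plan is to exploit the fact that the hypothesis $C^{1}_{d}=\emptyset$ forces the birational map $\widetilde{\tau}:G^{g-d-1}_{2g-2-d}(C)\dashrightarrow C_{d}$ to be an honest biregular isomorphism, and then to transport ampleness of a Pl\"ucker-type divisor across this isomorphism. First I would observe that since $C^{1}_{d}=\emptyset$ we have $a_{0,d}:C_{d}\to W_{d}(C)$ injective and $\mathfrak{I}^{g-d-1}_{2g-2-d}(C)$ empty, so the construction of $\widetilde{\tau}$ in Section~\ref{resid} actually produces a biregular isomorphism of varieties; in particular $\widetilde{\tau}^{\ast}:N^{1}_{\mathbb{R}}(C_{d})\to N^{1}_{\mathbb{R}}(G^{g-d-1}_{2g-2-d}(C))$ is an isomorphism carrying ample classes to ample classes. (Here one should note that $C^{1}_{d}=\emptyset$ forces $d\le g-1$, since otherwise $W_{d}(C)=\mathrm{Pic}^{d}(C)$ and $C^{r}_{d}$ is nonempty for small $r$ — for $d\le g-1$ this is exactly the setting where $\tau$ and $\widetilde{\tau}$ are defined, so there is no vacuity issue.)

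\medskip

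Next I would invoke Proposition~\ref{explicit_isom}, which computes $\widetilde{\tau}^{\ast}(\theta-x)=\widehat{X}$, the numerical class of the divisors $\widehat{X}_{D}$ on $G^{g-d-1}_{2g-2-d}(C)$ associated to effective divisors $D$ of degree $(g-d-1)+1=g-d$ on $C$. By Proposition~\ref{plucker_ample}, each such $\widehat{X}_{D}$ carries the structure of an ample divisor on $G^{g-d-1}_{2g-2-d}(C)$, so $\widehat{X}$ is an ample class. Since $\widetilde{\tau}$ is a biregular isomorphism under our hypothesis, pulling back along $(\widetilde{\tau}^{\ast})^{-1}$ preserves ampleness, and therefore $\theta-x$ is ample on $C_{d}$.

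\medskip

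The only genuinely delicate point is the first step — verifying that $C^{1}_{d}=\emptyset$ really does upgrade $\widetilde{\tau}$ from a birational map to a biregular isomorphism without any smoothness or generality hypothesis on $C$. This follows from the description in Section~\ref{resid}: the isomorphism $\tau:W^{g-d-1}_{2g-2-d}(C)\xrightarrow{\sim}W_{d}(C)$ of determinantal loci always exists (it is just Serre duality plus Riemann--Roch), and when $C^{1}_{d}=\emptyset$ the Abel map $a_{0,d}:C_{d}\to W_{d}(C)$ is a bijective morphism from a smooth projective variety; combined with the remark following Proposition~\ref{picard_isom} that the singular locus of $G^{g-d-1}_{2g-2-d}(C)$ is contained in $\mathfrak{I}^{g-d-1}_{2g-2-d}(C)$, and the fact that $C^{1}_{d}=\emptyset$ forces $\mathfrak{I}^{g-d-1}_{2g-2-d}(C)=\emptyset$ via $\tau$, one sees both sides are smooth and the map is an isomorphism. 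Once this is in hand, everything else is a formal consequence of Propositions~\ref{explicit_isom} and~\ref{plucker_ample}, so the corollary is indeed immediate as claimed.
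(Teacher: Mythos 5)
Your proposal is correct and follows exactly the paper's route: the paper likewise observes that $C^{1}_{d}=\emptyset$ makes $\widetilde{\tau}$ a biregular isomorphism and then deduces the corollary as an immediate application of Proposition~\ref{explicit_isom} together with the ampleness of $\widehat{X}_{D}$ from Proposition~\ref{plucker_ample}. Your additional care in justifying the biregularity (via the emptiness of $\mathfrak{I}^{g-d-1}_{2g-2-d}(C)$ and the smoothness remark) only makes explicit what the paper asserts in Section~\ref{resid}.
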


In the case where $C$ is a general curve of even genus $g,$ Corollary \ref{subnef} is subsumed by Pacienza's computation of the nef cone of $C_{\frac{g}{2}}$ in \cite{Pac}.  However, it yields the best inner bound for the nef cone of $C_{d}$ for $3 \leq d \leq \frac{g+1}{2}$ currently available when $C$ is a general curve of odd genus $g \geq 5.$

\section{Proofs of The Main Results}
\label{proofs}  
\subsection{The Nonhyperelliptic Case} In this section we prove Theorems A,C,D,E, and F.
\label{nonhyp}

\subsubsection{The Effective Cone}
\label{seceff}
The following calculation establishes (i) of Theorem A.
\begin{prop}
Let $C$ be a very general curve of genus $g \geq 4.$  For $2 \leq d \leq g$ let $D_{d}$ be the reduced divisor on $C_{d}$ supported on the set $$\bigcup_{p \in C}{\Gamma}_{d}\bigl(K_{C}(-(g-d+1)p)\bigr)$$  Then the numerical class of $D_{d}$ is $$(g-d+1)\bigl((g^{2}-dg+(d-2))\theta-(g^{2}-(d-1)g-2)x\bigr).$$
\end{prop}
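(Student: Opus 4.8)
The plan is to compute the numerical class of $D_{d}$ by exhibiting it as the image of a natural cycle under the map $\phi_{g-d+1,1,\dots,1}: C\times C_{d-1}\to C_{d}$, or equivalently by identifying $D_{d}$ with a degeneracy locus on $C\times C_{d-1}$ and pushing forward. More precisely, $D_{d}$ parametrizes those $E\in C_{d}$ which are subordinate to $K_{C}(-(g-d+1)p)$ for some $p\in C$; so I would work on the incidence variety $\{(p,E): (g-d+1)p \le E \text{ in the subordinacy sense}\}$, but the cleaner route is to write $D_{d}$ as the pushforward under $u:C\times C_{d-1}\to C_{d}$, $(p,D')\mapsto (g-d+1)p + $ (something). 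Actually the most economical approach: $D_{d}$ is the locus of $E$ such that there exists $p$ with $h^{0}(K_{C}(-E)(-(g-d+1)p)\,\text{nontrivial restriction})$, i.e. the union over $p$ of $\Gamma_{d}(K_{C}(-(g-d+1)p))$, each of which has class $\sum_{k}\binom{(2g-2-(g-d+1))-g-(g-d-1)}{k}\frac{x^{k}\theta^{d-(g-d-1)-k}}{\dots}$ — wait, that only works when $d\le g-1$ and gives a cycle of the wrong dimension for a divisor. So instead I would realize $D_{d}$ as a genuine divisor via a determinantal/Gauss-type construction and compute by intersecting with test curves.

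Concretely, here is the approach I would actually carry out. The class of $D_{d}$ lies in the span of $x^{d-1}$ replaced by $\theta$ and $x$ in degree $1$, so write $[D_{d}] = a\theta + bx$ and determine $a,b$ by two intersection numbers. To get $a$: intersect with a general fibre of the Abel map restricted to a curve, or more simply use that $D_{d}\cdot x^{d-1}$ counts, for a general pencil of divisors of the form $E = F + $ (fixed general $D''\in C_{d-1}$) with $F$ ranging over a $g^{1}$ of small degree... Better: I would use the two test classes $x^{d-1}$ and $x^{d-2}\theta$ (or $\theta^{d-1}$), compute $D_{d}\cdot x^{d-1}$ and $D_{d}\cdot x^{d-2}\theta$ directly, and then solve using Lemma \ref{intersect}. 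The number $D_{d}\cdot x^{d-1}$ is the number of $E\in C_{d}$ through $d-1$ general points $p_{1},\dots,p_{d-1}$ that are subordinate to $K_{C}(-(g-d+1)p)$ for some $p$: fixing $p_{1},\dots,p_{d-1}$, such $E$ equals $p_{1}+\dots+p_{d-1}+q$, and the condition becomes $q$ lies in the ramification-type locus of the residual series — a finite count I would extract from the theory of the Gauss map (the number of hyperplanes tangent to a fixed $(g-d+1)$-fold point, which is classical). Alternatively, and this is the route I think is cleanest, identify $D_{d}$ with a Chern class of a jet bundle: $D_{d}$ is the degeneracy locus of the natural map $\bigwedge^{g-d+1}\!H^{0}(K_{C})\otimes\mathcal{O}\to$ (the bundle of $(g-d+1)$-jets of the canonical series along a moving point), pulled back appropriately, so its class is a product of Chern classes of $E_{K_{C}}$-type bundles and tautological classes on $C\times C_{d-1}$, computable via the splitting principle and the known Chern classes of $E_{\mathcal{L}}$ from \cite{ACGH}.

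The main obstacle, I expect, is bookkeeping the contribution of the "moving $(g-d+1)$-fold point" correctly — in particular making sure the pushforward from the incidence variety $\mathcal{I}=\{(p,E)\}\subset C\times C_{d}$ to $C_{d}$ is generically one-to-one (so that no extraneous multiplicity appears), and handling the degenerate locus where $E$ itself contains $p$ with high multiplicity. I would argue genericity of $C$ (hence Gieseker, Theorem \ref{gieseker}, and general position for the canonical image) to ensure $\mathcal{I}\to D_{d}$ is birational and that all the relevant loci have expected dimension. Once that reduction is in place, the computation is a finite Chern-class calculation: express $[\mathcal{I}]$ on $C\times C_{d}$ using Lemma \ref{subordinate} applied fibrewise over $C$ (the fibre over $p$ being $\Gamma_{d}(K_{C}(-(g-d+1)p))$ with its class $\sum_{k}\binom{g-d-2}{k}\frac{x^{k}\theta^{d-(g-d-1)-k}}{(d-(g-d-1)-k)!}$... — here I must be careful that this only has a divisorial union when the varying linear parameter $p$ raises the dimension by one, which it does), then push forward along $C\times C_{d}\to C_{d}$ picking up the class of a point times the $C$-factor. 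Cross-checking the final answer against the known boundary cases — $d=g-1$ should give $\theta - r_{g,g-1}x$ up to scale, i.e. the Gauss ramification divisor with class proportional to $(g-2)\theta - (g-3)\cdot\frac{g^2-2g-2}{\dots}$... more precisely it should match $2g-3$ times $(\theta - (1+\frac{1}{2g-3})x)$ after clearing denominators — is the sanity check I would run to catch sign or off-by-one errors.
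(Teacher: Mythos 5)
Your overall skeleton --- write the class as $a\theta-bx$ and pin down $a,b$ with two intersection numbers --- is the same as the paper's, and your first count is essentially the paper's first equation: $D_{d}\cdot x^{d-1}$ is the number of $q$ with $q+q_{1}+\cdots+q_{d-1}$ subordinate to some $K_{C}(-(g-d+1)p)$, which the paper evaluates not by an appeal to "classical Gauss map" numerology but by the clean reduction $(g-2)\cdot\Delta_{g-d+1}\cdot\Gamma_{g-d+1}(K_{C}(-q_{1}-\cdots-q_{d-1}))$: first count the $p$ with $(g-d+1)p$ subordinate to the residual series (an intersection number computable from Lemma \ref{subordinate} and Proposition \ref{small_diag}), then multiply by the $g-2$ residual choices of $q$. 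The genuine gap is your second equation. Neither $x^{d-2}\theta$ nor $\theta^{d-1}$ is represented by a curve on which the intersection with $D_{d}$ has any evident enumerative meaning, and the jet-bundle alternative is not set up: your fibrewise application of Lemma \ref{subordinate} uses $r=g-d-1$, whereas $\dim|K_{C}(-(g-d+1)p)|=d-2$ for general $p$, so each $\Gamma_{d}(K_{C}(-(g-d+1)p))$ has codimension $2$ in $C_{d}$ and the union over the one-parameter family of $p$'s is the divisor; converting that union into a degeneracy-locus class on $C\times C_{d}$ would require the Chern classes of a relative jet/secant bundle that you never compute. The paper's key device, which is missing from your plan, is to take the small diagonal $\Delta_{d}$ as the second test curve: a point $dq\in\Delta_{d}$ lies on $D_{d}$ iff $(g-d+1)p+dq$ is subordinate to $|K_{C}|$ for some $p$, so $\Delta_{d}\cdot D_{d}$ becomes the intersection number $\Delta_{g-d+1,d}\cdot\Gamma_{g+1}(K_{C})$ in $C_{g+1}$, and the class of $\Delta_{g-d+1,d}$ is computed separately in Proposition \ref{other_diag}. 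Without this (or an equivalent second computable count) your linear system is underdetermined and the proof does not close.

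The multiplicity worries you raise are real, but you name them without resolving them, and they are precisely where the correction factors enter: the factor $g-2$ in the first equation comes from the residual points $q$ over each admissible $p$, and the factor $(1+\delta_{(d,\frac{g+1}{2})})$ in the second comes from the fact that $\phi_{g-d+1,d}:C\times C\to C_{g+1}$ is $2$-to-$1$ onto its image exactly when $d=\frac{g+1}{2}$. Your sanity check at $d=g-1$ is a good idea but is not a substitute for producing the second equation.
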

\begin{proof}
Fix $d-1$ general points $q_{1},...q_{d-1}$ on C.  The two test curves in $C_{d}$ that we will use to compute the numerical class of $D_{d}$ are $\Delta_{d}$ and the curve $${\chi}_{d}:=\bigcap_{j=1}^{d-1}X_{q_{j}}=\{p+q_{1}+...+q_{d-1}:p \in C\}$$  The numerical class of ${\chi}_{d}$ is $x^{d-1},$ and by Proposition \ref{other_diag}, the numerical class of $\Delta_{d}$ is $dx^{d-2}(((d-1)g+1)x-(d-1)\theta)$.  

\hspace{0.6cm}The intersection number ${\chi}_{d} \cdot D_{d}$ is the cardinality of the set $$\biggl\{q \in C : \exists p \in C \ni (g-d+1)p+q \leq |K_{C}(-q_{1}-...-q_{d-1})| \biggr\}$$  If $(g-d+1)p \leq |K_{C}(-q_{1}-...-q_{d-1})|,$ then there are $$((2g-2)-(d-1))-(g-d+1)=g-2$$ points $q$ (counting multiplicity) such that $$(g-d+1)p+q \leq |K_{C}(-q_{1}-...-q_{d-1})|,$$ and we may conclude that $${\chi}_{d} \cdot D_{d}=(g-2) \cdot {\Delta}_{g-d+1} \cdot {\Gamma}_{g-d+1}(K_{C}(-q_{1}-...-q_{d-1})).$$  

\hspace{0.6cm}When $d \neq \frac{g+1}{2},$ the intersection number $\Delta_{d} \cdot D_{d}$ is the cardinality of the set $$\mathfrak{K}_{g,d}:=\biggl\{q \in C: \exists p \in C \ni (g-d+1)p + dq \leq |K_{C}| \biggr\}$$ and when $d=\frac{g+1}{2},$ we have that $\Delta_{d} \cdot D_{d}$ is twice the cardinality of $\mathfrak{K}_{g,d}.$  Therefore our system of equations is 
\begin{align*}
{\chi}_{d} \cdot D_{d} &= (g-2) \cdot {\Delta}_{g-d+1} \cdot {\Gamma}_{g-d+1}(K_{C}(-q_{1}-...-q_{d-1}))\\  
\Delta_{d} \cdot D_{d} &= (1+\delta_{(d,\frac{g+1}{2})}) \cdot \Delta_{g-d+1,d} \cdot \Gamma_{g+1}(K_{C})
\end{align*}
\hspace{0.6cm}If the numerical class of $D_{d}$ is $a\theta-bx$ for $a,b \in \mathbb{R},$ this becomes
\begin{align*}
ag-b &= g^{4}-2dg^{3}+(d^{2}+2d-4)g^{2}-(2d^{2}-5d+1)g-(2d-2)\\ 
adg-b &= dg^{4}-(2d^{2}-d+1)g^{3}+(d^{3}-2)g^{2}-(d^{3}-2d^{2}-1)g-(2d-2)
\end{align*}
after applying the class computations in Lemma \ref{subordinate} and Propositions \ref{small_diag} and \ref{other_diag}, and we have the solution 
\begin{align*}
a &= g^{3}-(2d-1)g^{2}+(d^{2}-2)g-(d-1)(d-2)=(g-d+1)(g^{2}-dg+(d-2))\\
b &= g^{3}+(2-2d)g^{2}+(d^{2}-2d-1)g+(2d-2)=(g-d+1)(g^{2}-(d-1)g-2).
\end{align*}
\end{proof}

We now turn to the proof of Theorem F.  Before introducing the relevant divisor, we prove two preliminary lemmas.
\begin{lem}
\label{combsum}
For all $m \geq 1,$
$$(2m+3)\cdot\displaystyle\sum_{l=0}^{m}(-1)^{l}(l+1)\displaystyle\binom{2m-l}{m}\displaystyle\binom{2m+2}{l+3}=-(m+2)\cdot\displaystyle\sum_{l=0}^{m}(-1)^{l}l(l+1)\displaystyle\binom{2m-l}{m}\displaystyle\binom{2m+3}{l+3}$$
\end{lem}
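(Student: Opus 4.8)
The identity asserts that two alternating sums — each of the form $\sum_l (-1)^l (\text{polynomial in } l)\binom{2m-l}{m}\binom{N}{l+3}$ with $N = 2m+2$ or $2m+3$ — are proportional with the explicit constants $2m+3$ and $-(m+2)$. The plan is to evaluate each side as a coefficient extraction in a generating function. The key observation is that $\binom{2m-l}{m}$ is the coefficient of $z^m$ in $(1+z)^{2m-l}$, and $\sum_l (-1)^l \binom{N}{l+3} w^l$ is, up to a shift, $(1-w)^N$. So after pulling out the $z^m$-coefficient, the inner sum over $l$ becomes a value of a derivative of $(1+z)^{2m}\sum_l \binom{N}{l+3}(-1/(1+z))^l$, which collapses to something like $(1+z)^{2m-?}((1+z)-1)^{?} = z^{?}(1+z)^{2m-?}$ times a polynomial coming from the "polynomial in $l$" factor (differentiation with respect to an auxiliary variable). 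The upshot is that each side becomes the $z^m$-coefficient of an explicit rational/polynomial expression in $z$, and the lemma reduces to a polynomial identity in $z$ that can be checked directly.

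Concretely, first I would introduce $w$ and write
\begin{align*}
S_N(j) &:= \sum_{l=0}^{m} (-1)^l \binom{2m-l}{m}\binom{N}{l+3} \cdot [l]_j,
\end{align*}
where $[l]_0 = 1$, $[l]_1 = l+1$ (for $N=2m+2$) and $[l]_2 = l(l+1)$ (for $N=2m+3$) — i.e. treat the polynomial factors via falling/rising factorials so they arise from applying $w\frac{d}{dw}$ (or $\frac{d}{dw}$) to a geometric-type series. Then I would record $\binom{2m-l}{m} = [z^m](1+z)^{2m-l}$ and
\begin{align*}
\sum_{l\ge 0} \binom{N}{l+3} w^{l+3} &= (1+w)^N - 1 - Nw - \binom{N}{2}w^2,
\end{align*}
so that $\sum_l (-1)^l\binom{2m-l}{m}\binom{N}{l+3} = [z^m]\,(1+z)^{2m}\, w^{-3}\big[(1+w)^N - \text{(low terms)}\big]$ evaluated at $w = -1/(1+z)$, at which point $1+w = z/(1+z)$ and the whole bracket becomes a clean monomial-plus-correction in $z$. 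The polynomial prefactors $l+1$ and $l(l+1)$ are handled by first multiplying the series by the appropriate power of $w$ and differentiating before substituting $w=-1/(1+z)$.

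After this reduction, both sides of the claimed identity are expressed as $[z^m]$ of explicit elements of $\mathbb{Q}(z)$ (in fact, after clearing the $(1+z)$ denominators against the $(1+z)^{2m}$ factor, explicit polynomials in $z$ of bounded degree, plus possibly terms supported away from degree $m$). The lemma then follows by verifying equality of those two polynomials coefficient-by-coefficient in the single variable $z$ — a finite, mechanical check independent of $m$, since the $m$-dependence has been isolated into the exponents and the prefactors $2m+3$, $m+2$.

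**Main obstacle.** The delicate point is bookkeeping the "low-order correction terms" — the $1 + Nw + \binom{N}{2}w^2$ subtracted to account for $l \ge 0$ (equivalently $l+3 \ge 3$) rather than $l+3 \ge 0$ — and making sure that after the substitution $w = -1/(1+z)$ these corrections genuinely do not contribute to the $z^m$-coefficient (for $m$ large enough; small $m$ may need to be checked separately). Equivalently, one must be careful that the differentiation steps producing the factors $l+1$ and $l(l+1)$ do not disturb which terms survive. I expect this is the only place where genuine care is needed; once the generating-function skeleton is set up correctly, the final comparison is routine. An alternative that avoids some of this is a direct induction on $m$ using Pascal's rule on the binomial coefficients $\binom{2m-l}{m}$, $\binom{N}{l+3}$, but the generating-function route makes the proportionality constants transparent and is less error-prone.
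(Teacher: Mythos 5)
Your generating-function plan is workable and genuinely different from the paper's argument, but your forecast of how the delicate step resolves is exactly backwards. After substituting $w=-1/(1+z)$ one has $1+w=z/(1+z)$, so the principal term $(1+w)^{N}$ (and its derivatives) produces only monomials $z^{N},z^{N-1},z^{N-2}$ times powers of $1+z$; since $N-2=2m+1>m$ for $N=2m+2,2m+3$, it is the \emph{main} term that dies under $[z^{m}]$, while the low-order corrections $1+Nw+\binom{N}{2}w^{2}$ survive and constitute the entire answer. Carrying your plan through (the interchange of $[z^{m}]$ with the finite sum over $l$ is legitimate, and the prefactors $l+1$, $l(l+1)$ are handled by $\frac{d}{dw}[w^{-2}f_{N}(w)]$ and $w\frac{d^{2}}{dw^{2}}[w^{-2}f_{N}(w)]$ with $f_{N}(w)=(1+w)^{N}-1-Nw-\binom{N}{2}w^{2}$) gives the closed forms
$$\sum_{l}(-1)^{l}(l+1)\binom{2m-l}{m}\binom{2m+2}{l+3}=-2\binom{2m+3}{m}+(2m+2)\binom{2m+2}{m},$$
$$\sum_{l}(-1)^{l}l(l+1)\binom{2m-l}{m}\binom{2m+3}{l+3}=6\binom{2m+3}{m}-(4m+6)\binom{2m+2}{m},$$
and the lemma follows from $\binom{2m+3}{m}=\frac{2m+3}{m+3}\binom{2m+2}{m}$ by a two-line computation (both sides reduce to $\frac{2m(2m+3)(m+2)}{m+3}\binom{2m+2}{m}$). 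Note that this last step is not ``independent of $m$'' as you claim: the surviving contributions are $m$-dependent binomial coefficients, not a fixed polynomial identity in $z$.

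For comparison, the paper moves everything to one side and collapses the two $l$-dependent binomials into one: it first checks $l(m+2)\binom{2m+3}{l+3}+(2m+3)\binom{2m+2}{l+3}=((m+1)l+2m)\binom{2m+3}{2m-l}$ and then applies the subset-of-a-subset identity $\binom{2m-l}{m}\binom{2m+3}{2m-l}=\binom{2m+3}{m}\binom{m+3}{m-l}$, after which the sum is a single convolution with generating function $(2m-2t)(1+t)^{m}$, whose $t^{m}$-coefficient visibly vanishes. That route needs the clever collapsing identity but then finishes in one line; yours requires no such observation but demands the derivative and correction-term bookkeeping above, which you have currently predicted incorrectly. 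Your proposal is a sound outline, not yet a proof; fix the expectation about which terms survive and supply the displayed computations and it closes.
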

\begin{proof}
It suffices to show that for all $m \geq 1,$ $$\displaystyle\sum_{l=0}^{m}(-1)^{l}(l+1)\displaystyle\binom{2m-l}{m}\Bigl(l(m+2)\displaystyle\binom{2m+3}{l+3}+(2m+3)\binom{2m+2}{l+3}\Bigr)=0.$$
First note that $$l(m+2)\displaystyle\binom{2m+3}{l+3}+(2m+3)\displaystyle\binom{2m+2}{l+3}=((m+1)l+2m)\displaystyle\binom{2m+3}{2m-l}$$ and  $$\displaystyle\binom{2m-l}{m}\displaystyle\binom{2m+3}{2m-l}=\displaystyle\binom{2m+3}{m}\displaystyle\binom{m+3}{m-l}$$ for all $l,$ so that the left-hand side is proportional to the sum $$\displaystyle\sum_{l=0}^{m}(-1)^{l}\bigl((m+1)l^{2}+(3m+1)l+2m\bigr)\displaystyle\binom{m+3}{m-l}.$$  This is the $m$-th convolution of the sequences $\bigl\{(-1)^{l}((m+1)l^{2}+(3m+1)l+2m)\bigr\}_{l}$ and $\bigl\{\binom{m+3}{l}\bigr\}_{l}$ whose respective generating functions are $\frac{2m-2t}{(1+t)^{3}}$ and $(1+t)^{m+3}.$  Therefore it is the coefficient of $t^{m}$ in $(2m-2t)(1+t)^{m},$ which is zero.  
\end{proof}

\begin{lem}
\label{orth}
If $|\mathcal{L}|$ is any pencil of degree $k+1$ on $C,$ then $\Gamma_{k}(\mathcal{L}) \cdot (\theta-(2-\frac{1}{k})x)=0.$
\end{lem}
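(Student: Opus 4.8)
The plan is to compute the two intersection numbers $\Gamma_{k}(\mathcal{L})\cdot\theta$ and $\Gamma_{k}(\mathcal{L})\cdot x$ explicitly; everything will then come down to the Poincar\'{e} formula (Lemma \ref{intersect}) together with the Vandermonde convolution. A pencil $|\mathcal{L}|$ of degree $k+1$ is a complete linear series of degree $n=k+1$ and dimension $r=1$, so Lemma \ref{subordinate} applies with $d=k$ (the hypotheses $n\geq d\geq r$ being clear) and tells us that $\Gamma_{k}(\mathcal{L})$ is a curve in $C_{k}$ whose fundamental class is
$$\sum_{j=0}^{k-1}\binom{k-g}{j}\,\frac{x^{j}\theta^{k-1-j}}{(k-1-j)!}.$$
Since this class depends only on $n$, $d$, $r$ and $g$, it is already apparent that the answer will not depend on the particular pencil chosen, which is exactly the uniformity asserted in the lemma.

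First I would intersect with $\theta$: multiplying the class above by $\theta$, applying Lemma \ref{intersect} in the form $x^{j}\theta^{k-j}=\frac{g!}{(g-k+j)!}$, and using the identity $(k-1-j)+(g-k+j)=g-1$, one obtains $\Gamma_{k}(\mathcal{L})\cdot\theta=g\sum_{j=0}^{k-1}\binom{k-g}{j}\binom{g-1}{k-1-j}$. The key step is that the remaining sum is a Vandermonde convolution, hence equal to $\binom{(k-g)+(g-1)}{k-1}=\binom{k-1}{k-1}=1$, so that $\Gamma_{k}(\mathcal{L})\cdot\theta=g$. Running the identical manipulation with $x$ in place of the final factor $\theta$ (now using $x^{j+1}\theta^{k-1-j}=\frac{g!}{(g-k+j+1)!}$ and $(k-1-j)+(g-k+j+1)=g$) reduces $\Gamma_{k}(\mathcal{L})\cdot x$ to $\sum_{j=0}^{k-1}\binom{k-g}{j}\binom{g}{k-1-j}=\binom{k}{k-1}=k$.

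Finally, recalling that the genus in Theorem F is $g=2k-1$, we conclude $\Gamma_{k}(\mathcal{L})\cdot\left(\theta-(2-\tfrac{1}{k})x\right)=g-(2-\tfrac{1}{k})k=(2k-1)-(2k-1)=0$, which is the assertion. I do not anticipate any genuine obstacle here: the whole content is the recognition of the Vandermonde structure of the two sums, and the only points requiring care are the precise value $g=2k-1$ of the genus (so that $k-g=1-k$ is the top entry appearing in Lemma \ref{subordinate}) and the index bookkeeping when invoking Lemma \ref{intersect}.
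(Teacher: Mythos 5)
Your proof is correct and takes essentially the same route as the paper's: both compute $\Gamma_{k}(\mathcal{L})\cdot\theta=2k-1$ and $\Gamma_{k}(\mathcal{L})\cdot x=k$ from Lemmas \ref{subordinate} and \ref{intersect} and then conclude. The only cosmetic difference is that the paper evaluates the two sums by the generating-function argument of Lemma \ref{combsum}, which is precisely the Vandermonde convolution you invoke directly.
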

\begin{proof}
Fix a pencil $|\mathcal{L}|$ of degree $k+1$ on $C.$  It suffices to show that $\Gamma_{k}(\mathcal{L}) \cdot \theta=2k-1$ and $\Gamma_{k}(\mathcal{L}) \cdot x = k.$  By Lemmas \ref{intersect} and \ref{subordinate}, we have that 
\begin{align*}
\Gamma_{k}(\mathcal{L}) \cdot \theta &= (2k-1) \cdot \displaystyle\sum_{j=0}^{k-1}(-1)^{j}\displaystyle\binom{k-2+j}{j}\displaystyle\binom{2k-2}{k-1-j}\\ 
\Gamma_{k}(\mathcal{L}) \cdot x &= \displaystyle\sum_{j=0}^{k-1}(-1)^{j}\displaystyle\binom{k-2+j}{j}\displaystyle\binom{2k-1}{k-1-j}
\end{align*}  
Since the sequences  $\bigl\{(-1)^{j}\binom{k-2+j}{j}\bigr\}_{j},$ $\bigl\{\binom{2k-2}{j}\bigr\}_{j},$ and $\bigl\{\binom{2k-1}{j}\bigr\}_{j}$ have respective generating functions $(1+t)^{-(k-1)}$, $(1+t)^{2k-2}$, and $(1+t)^{2k-1}$, arguing as in the proof of Lemma \ref{combsum} establishes the desired equalities. 
\end{proof}  

Let $C$ be a general curve of genus $2k-1 \geq 5.$  Consider the diagram
$$\xymatrix{
C_{k} \times C_{2k-5} \ar[d]^{\pi} \ar[r]^{\sigma} &C_{3k-5}\\
C_{k}}$$ where $\sigma$ is defined by $\sigma(D,E)=D+E$ and $\pi$ is projection.  Since $C$ is general, we may deduce from Theorem \ref{gieseker} that $C^{k-2}_{3k-5}$ is $(k-1)-$ dimensional.  Furthermore, for each $D \in C_{k}$ we have that $\pi^{-1}(D) \cap \sigma^{-1}(C^{k-2}_{3k-5})$ is at most finite.  It follows that the cycle $E_{(k)}:=\pi_{\ast}\sigma^{\ast}(C^{k-2}_{3k-5})$ is an effective divisor on $C_{k}.$ 

\medskip
  
\textit{Proof of Theorem F:}  (i) By the push-pull formulas (e.g. Exercise D-8 in \cite{ACGH}) the class of $E_{(k)}$ is $$\displaystyle\frac{1}{k-1} \cdot \Bigl(\Bigl(\displaystyle\sum_{l=0}^{k-2}(-1)^{l}(l+1)\binom{2k-4-l}{k-2}\binom{2k-2}{l+3}\Bigr)\theta$$ 
$$+\Bigl(\displaystyle\sum_{l=0}^{k-2}(-1)^{l}l(l+1)\binom{2k-4-l}{k-2}\binom{2k-1}{l+3}\Bigr)x\Bigr)$$  Setting $m=k-2$ in Lemma \ref{combsum}, we see that this class is proportional to $\theta-(2-\frac{1}{k})x.$

(ii) Let $D$ be an effective divisor on $C_{k}$ whose class is proportional to $\theta-tx$ for some $t > 2-\frac{1}{k}.$  By Lemma \ref{orth}, $D \cdot \Gamma_{k}(\mathcal{L})<0$ for all $\mathcal{L} \in W^{1}_{k+1}(C),$ so that $Z_{k} \subseteq \textbf{B}(D).$ 

(iii) We employ a slight variation on an argument used in the proof of Theorem 5 of \cite{Kou}.  First we show that $E_{(3)}$ is irreducible.  We have a morphism $\mu: C \times W^{1}_{4}(C) \rightarrow C_{3}$ which takes each pair $(p,\mathcal{L})$ to the unique element of the linear system $|\mathcal{L}(-p)|.$  Since $C$ is general, the Brill-Noether locus $W^{1}_{4}(C)$ is a smooth irreducible curve, which implies the irreducibility of $C \times W^{1}_{4}(C).$  The irreducibility of $E_{(3)}$ then follows at once from the observation that $E_{(3)}$ is the image of $\mu.$

It follows from (i) that the class of $E_{(3)}$ is proportional to $\theta-\frac{5}{3}x.$  Suppose that there is an irreducible effective divisor $D'$ whose class is proportional to $\theta-tx$ for $t>\frac{5}{3}.$  Then for any $\mathcal{L} \in W^{1}_{4}(C)$ we have $\Gamma_{3}(\mathcal{L}) \cdot D' < 0,$ so that all such curves are contained in $D'.$  But this means $D'=E_{(3)},$ which is absurd. \hfill \qedsymbol

\subsubsection{Results on Base Loci and Volume} 
\label{secbas}
The next theorem implies (ii) of Theorem A when combined with Lemma \ref{char_cones} and Theorem 3 in \cite{Kou}, and it implies Theorem C when combined with Proposition \ref{plucker_ample}.

\begin{thm}
\label{stab_classes}
Let $\mathcal{L}$ be a stable line bundle on $G^{g-d-1}_{2g-2-d}(C)$ with stable base locus $Z$ and numerical class $a\widehat{X}+b\widehat{\theta}$ satisfying $a>0$ and $a+b>0.$ Then the line bundle $({\tau}^{-1})^{\ast}\mathcal{L}$ on $C_{d}$ is stable with stable base locus $C^{1}_{d} \cup \tau^{-1}(Z).$  
\end{thm}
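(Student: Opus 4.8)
The plan is to compare base loci on $C_{d}$ with those on $G^{g-d-1}_{2g-2-d}(C)$ over the open sets where $\widetilde{\tau}$ is biregular, and then to account for the exceptional locus $C^{1}_{d}$ by a separate intersection argument. Set $U=C_{d}\setminus C^{1}_{d}$ and $V=G^{g-d-1}_{2g-2-d}(C)\setminus\mathfrak{I}^{g-d-1}_{2g-2-d}(C)$; by Theorem \ref{gieseker} both complements have codimension $\geq 2$ and $\widetilde{\tau}$ restricts to an isomorphism $V\xrightarrow{\ \sim\ }U$. First I would observe that the isomorphisms of Proposition \ref{picard_isom} are, by their construction, compatible with restriction to $U$ and $V$: a section of $m\mathcal{L}$ and the section of $m(\tau^{-1})^{\ast}\mathcal{L}$ corresponding to it become identified via $\widetilde{\tau}$ over $V$. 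Since $\widetilde{\tau}|_{V}$ is a bijection this yields $\textnormal{Bs}\,|m(\tau^{-1})^{\ast}\mathcal{L}|\cap U=\widetilde{\tau}\!\left(\textnormal{Bs}\,|m\mathcal{L}|\cap V\right)$ for every $m\geq 1$, and intersecting over $m$ gives $\textbf{B}\!\left((\tau^{-1})^{\ast}\mathcal{L}\right)\cap U=\widetilde{\tau}(Z\cap V)=\tau^{-1}(Z)\cap U$, where $\tau^{-1}(Z)$ denotes the proper transform $\overline{\widetilde{\tau}(Z\cap V)}$.

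The crux is to show $C^{1}_{d}\subseteq\textbf{B}\!\left((\tau^{-1})^{\ast}\mathcal{L}\right)$. By Proposition \ref{explicit_isom} the numerical class of $(\tau^{-1})^{\ast}\mathcal{L}$ is $a(\theta-x)+b\theta=(a+b)\theta-ax$, whose $x$-coefficient is \emph{strictly negative} precisely because $a>0$. I would then prove the following lemma: \emph{every nonzero effective divisor $E$ on $C_{d}$ whose numerical class has the form $\alpha\theta-\beta x$ with $\beta>0$ contains $C^{1}_{d}$.} Indeed, $C^{1}_{d}$ is covered by the pencils it parametrizes: through every $D\in C^{1}_{d}$ passes a rational curve $\ell\cong\mathbb{P}^{1}\subseteq C^{1}_{d}$ consisting of the members of a pencil in $|D|$, and $a_{d}$ maps $\ell$ to a point while $x$ is ample, so $\ell\cdot\theta=0$ and $\ell\cdot x>0$; hence $\ell\cdot E=-\beta(\ell\cdot x)<0$, which forces $\ell\subseteq\textnormal{Supp}(E)$ as $E$ is effective, and letting $\ell$ vary gives $C^{1}_{d}\subseteq\textnormal{Supp}(E)$. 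Applying this to the divisor of an arbitrary section of a multiple of $(\tau^{-1})^{\ast}\mathcal{L}$ shows $C^{1}_{d}\subseteq\textnormal{Bs}\,|m(\tau^{-1})^{\ast}\mathcal{L}|$ for all $m$, so $C^{1}_{d}\subseteq\textbf{B}\!\left((\tau^{-1})^{\ast}\mathcal{L}\right)$. The same two steps, carried out for any $\mathbb{Q}$-line bundle $\mathcal{M}$ on $G^{g-d-1}_{2g-2-d}(C)$ whose numerical class $a'\widehat{X}+b'\widehat{\theta}$ has $a'>0$, give the identity
$$\textbf{B}\!\left((\tau^{-1})^{\ast}\mathcal{M}\right)=C^{1}_{d}\cup\tau^{-1}(\textbf{B}(\mathcal{M})),\qquad(\ast)$$
which for $\mathcal{M}=\mathcal{L}$ already singles out the candidate stable base locus $C^{1}_{d}\cup\tau^{-1}(Z)$.

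It then remains to show that $(\tau^{-1})^{\ast}\mathcal{L}$ is stable, i.e.\ $\textbf{B}_{-}=\textbf{B}_{+}$. Since $\textbf{B}_{-}\subseteq\textbf{B}\subseteq\textbf{B}_{+}$ (Lemma \ref{basicloc}) and $\textbf{B}\!\left((\tau^{-1})^{\ast}\mathcal{L}\right)=C^{1}_{d}\cup\tau^{-1}(Z)$ by $(\ast)$, it is enough to prove $\textbf{B}_{+}\!\left((\tau^{-1})^{\ast}\mathcal{L}\right)\subseteq C^{1}_{d}\cup\tau^{-1}(Z)\subseteq\textbf{B}_{-}\!\left((\tau^{-1})^{\ast}\mathcal{L}\right)$. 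For a small ample class $A$ on $C_{d}$ one has $(\tau^{-1})^{\ast}\mathcal{L}\pm A=(\tau^{-1})^{\ast}(\mathcal{L}\pm\widetilde{\tau}^{\ast}A)$, and $\mathcal{L}\pm\widetilde{\tau}^{\ast}A$ still has positive $\widehat{X}$-coefficient once $\|A\|$ is small, so $(\ast)$ applies and $\textbf{B}\!\left((\tau^{-1})^{\ast}\mathcal{L}\pm A\right)=C^{1}_{d}\cup\tau^{-1}\!\left(\textbf{B}(\mathcal{L}\pm\widetilde{\tau}^{\ast}A)\right)$. Feeding these formulas into the definitions of $\textbf{B}_{\pm}$, and using Proposition \ref{stab_drop} on $G^{g-d-1}_{2g-2-d}(C)$ together with the stability of $\mathcal{L}$ (so that $\textbf{B}_{+}(\mathcal{L})=\textbf{B}_{-}(\mathcal{L})=Z$), the inclusion $\textbf{B}(D_{1}+D_{2})\subseteq\textbf{B}(D_{1})\cup\textbf{B}(D_{2})$, and the already-proved $C^{1}_{d}\subseteq\textbf{B}\!\left((\tau^{-1})^{\ast}\mathcal{L}+A\right)$, one concludes that $\textbf{B}_{-}$ and $\textbf{B}_{+}$ both equal $C^{1}_{d}\cup\tau^{-1}(Z)$.

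I expect this last step to be the main obstacle. The difficulty is that $\widetilde{\tau}^{\ast}$ is only a \emph{birational} pullback and does not preserve ampleness: $\widehat{X}=\widetilde{\tau}^{\ast}(\theta-x)$ is ample on $G^{g-d-1}_{2g-2-d}(C)$ by Proposition \ref{plucker_ample}, whereas $\theta-x$ is not ample on $C_{d}$, and correspondingly $\widetilde{\tau}^{\ast}\!\left(\mathrm{Amp}(C_{d})\right)$ and $\mathrm{Amp}\!\left(G^{g-d-1}_{2g-2-d}(C)\right)$ abut the common ray $\mathbb{R}_{\geq 0}\widehat{\theta}$ from opposite sides. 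Hence $\widetilde{\tau}^{\ast}A$ need not be ample on $G^{g-d-1}_{2g-2-d}(C)$, so one cannot simply apply Proposition \ref{stab_drop} to $\mathcal{L}\mp\widetilde{\tau}^{\ast}A$. The device that circumvents this is that $x$ \emph{is} ample on $C_{d}$, so $\widetilde{\tau}^{\ast}x=\widehat{\theta}-\widehat{X}$ lies in $\widetilde{\tau}^{\ast}\!\left(\mathrm{Amp}(C_{d})\right)$; since any ample class $B$ on $G^{g-d-1}_{2g-2-d}(C)$ is an interior point of its ample cone, $B-\varepsilon(\widehat{\theta}-\widehat{X})$ stays ample for small $\varepsilon>0$, and writing $\mathcal{L}-\widetilde{\tau}^{\ast}(\varepsilon x)=(\mathcal{L}-B)+\bigl(B-\widetilde{\tau}^{\ast}(\varepsilon x)\bigr)$ (and similarly with the sign reversed) lets one squeeze the relevant intersections and unions between the perturbations coming from $\widetilde{\tau}^{\ast}(\mathrm{Amp}(C_{d}))$ and those coming from $\mathrm{Amp}\!\left(G^{g-d-1}_{2g-2-d}(C)\right)$, thereby reducing the whole matter to the stability of $\mathcal{L}$.
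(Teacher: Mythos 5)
Your proposal is correct, and its skeleton coincides with the paper's: transfer sections through the isomorphism $\widetilde{\tau}|_{V}:V\to U$ (which is what Proposition \ref{picard_isom} encodes) to identify the base locus away from $C^{1}_{d}$, force $C^{1}_{d}$ into the base locus from the sign of the $x$-coefficient, and then perturb by ample classes to upgrade the computation of $\textbf{B}$ to the equality $\textbf{B}_{-}=\textbf{B}_{+}$. Two differences are worth recording. First, where the paper merely asserts that a class in the fourth quarter of the $(\theta,x)$-plane must have $C^{1}_{d}$ in its stable base locus, you actually prove it, via the pencils contracted by $a_{d}$; this is the right justification and deserves to be written out. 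Second, your endgame is genuinely different in mechanism: the paper invokes Propositions \ref{stab_dense} and \ref{stab_drop} to produce an open set of stable classes on $G^{g-d-1}_{2g-2-d}(C)$ with stable base locus $Z$, pushes this open set forward by $(\tau^{-1})^{\ast}$, and observes that its image meets the rays $\mathbb{R}_{>0}(\theta-tx)$ on both sides of $t_{0}=a/(a+b)$; you instead keep the perturbations on $C_{d}$ proportional to $x$ and control $\textbf{B}(\mathcal{L}\mp\epsilon\widetilde{\tau}^{\ast}x)$ by playing them off against genuinely ample classes $B$ on $G^{g-d-1}_{2g-2-d}(C)$ through the subadditivity $\textbf{B}(D_{1}+D_{2})\subseteq\textbf{B}(D_{1})\cup\textbf{B}(D_{2})$. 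Both devices address the same obstruction you correctly isolate --- $\widetilde{\tau}^{\ast}$ does not match the two ample cones, which abut along $\mathbb{R}_{\geq 0}\widehat{\theta}$ from opposite sides --- and yours has the merit of making every containment explicit. The one place to be careful is the $\textbf{B}_{-}$ direction, where the quantifiers must be ordered correctly: $\textbf{B}_{-}(\mathcal{L})=\bigcup_{B}\textbf{B}(\mathcal{L}+B)$, and for each fixed ample $B$ one has $\textbf{B}(\mathcal{L}+B)\subseteq\textbf{B}(\mathcal{L}+\epsilon\widetilde{\tau}^{\ast}x)$ only for $\epsilon$ small depending on $B$; the union over all such $\epsilon$ still captures all of $Z$ and lands inside $\textbf{B}_{-}((\tau^{-1})^{\ast}\mathcal{L})$ because $\epsilon x$ is ample on $C_{d}$, which is exactly what is needed.
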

\hspace{0.6cm}In particular, a line bundle $\mathcal{L}$ on $G^{g-d-1}_{2g-2-d}(C)$ with numerical class in the aforementioned range is stable precisely when the pullback bundle $({\tau}^{-1})^{\ast}\mathcal{L}$ on $C_{d}$ is stable.

\begin{proof}
Let $\mathcal{L}$ be a stable line bundle on $G^{g-d-1}_{2g-2-d}(C)$ satisfying the hypotheses, and let $\mathcal{M}:=(\tau^{-1})^{\ast}\mathcal{L}.$  By Proposition \ref{multiple}, we may assume without loss of generality that $\textrm{Bs}(|\mathcal{L}|)=\textbf{B}(\mathcal{L})=Z$ and $\textrm{Bs}(|\mathcal{M}|)=\textbf{B}(\mathcal{M}).$

\hspace{0.6cm}The hypothesis on the coefficients $a$ and $b$ guarantees that the numerical class of $\mathcal{M},$ which is $(a+b)\theta-ax,$ lies in the fourth quarter of the $\theta,x-$plane, so that the stable base locus of $\mathcal{M}$ must contain $C^{1}_{d}.$  By Proposition \ref{picard_isom}, pullback via ${\tau}^{-1}$ gives a natural isomorphism between $H^{0}(G^{g-d-1}_{2g-2-d}(C),\mathcal{L})$ and $H^{0}(C_{d},\mathcal{M}),$ so $\textbf{B}(\mathcal{M})=\textrm{Bs}(|\mathcal{M}|)=C^{1}_{d} \cup \tau^{-1}(Z).$  Indeed, if $x \in C_{d}-C^{1}_{d}$ is a basepoint of $|\mathcal{M}|,$ then $\tau(x)$ is a basepoint of $|\mathcal{L}|.$ 

\hspace{0.6cm}The set of stable classes in in $N^{1}_{\mathbb{R}}(G^{g-d-1}_{2g-2-d}(C))$ having $Z$ as its stable base locus is open, so its image under $(\tau^{-1})^{\ast}$ is open as well.  If $t_{0}:=\frac{a}{a+b},$ then by our previous calculation and Propositions \ref{stab_dense} and \ref{stab_drop}, we have that for some $\epsilon > 0,$ $\theta-tx$ is stable with stable base locus $C^{1}_{d} \cup \tau^{-1}(Z)$ for all $t$ satisfying $0<|t-t_{0}|<\epsilon.$  We then have by the definitions of the augmented and restricted base loci that $\textbf{B}_{-}( \mathcal{M})=C^{1}_{d} \cup \tau^{-1}(Z)=\textbf{B}_{+}(\mathcal{M}).$
\end{proof}    

We now prove Theorems D and E.  The following result is a rephrasing of Lemma 2.2 in \cite{Pac}; we refer to \textit{loc. cit.} for its proof.

\begin{lem}
\label{nef_small_diag}
If $C$ is any curve of genus $g$ and $d \geq 3,$ the numerical class $-\theta+dgx$ in $N^{1}_{\mathbb{R}}(C_{d})$ is nef and big, and its augmented base locus is $\Delta_{d}.$  In particular, $-\theta+dgx$ spans a boundary ray of the nef cone of $C_{d}.$ \hfill \qedsymbol
\end{lem}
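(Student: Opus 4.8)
The plan is to prove nefness and bigness of $D:=-\theta+dgx$ separately, then extract the augmented base locus from Nakamaye's theorem (Theorem \ref{nak_base_loc}), and finally read off the boundary-ray statement from Lemma \ref{char_cones}. Since $x$ is ample and $\theta$ is nef, for every irreducible curve $B\subset C_{d}$ we have $x\cdot B>0$ and $\theta\cdot B\ge 0$, so $D$ is nef if and only if $\theta\cdot B\le dg\,(x\cdot B)$ for all such $B$. I would reduce this to an estimate on a correspondence surface. Let $\nu:\widetilde{B}\to B$ be the normalization and let $S\subset C\times\widetilde{B}$ be the pullback of the universal divisor, a family of effective degree-$d$ divisors $\{S_{b}\}_{b\in\widetilde{B}}$; write $\mathfrak{f}_{C},\mathfrak{f}_{B}$ for the two ruling classes. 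Then $x\cdot B=\deg(S\to C)=S\cdot\mathfrak{f}_{C}=:e$ and $\deg(S\to\widetilde{B})=S\cdot\mathfrak{f}_{B}=d$. Using that $\theta$ is the determinant-of-cohomology class on $\mathrm{Pic}^{d}(C)$ and applying Grothendieck--Riemann--Roch to $q:C\times\widetilde{B}\to\widetilde{B}$ (after twisting $\mathcal{O}(S)$ so the fibrewise Euler characteristic vanishes), one obtains the clean formula $\theta\cdot B=de-\tfrac{1}{2}S^{2}$. As a check, for $B=\Delta_{d}$ the correspondence is the $d$-fold diagonal, $S^{2}=d^{2}(2-2g)$, and this returns $\theta\cdot\Delta_{d}=d^{2}g$, consistent with the class in Proposition \ref{small_diag}. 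The desired inequality thus becomes a lower bound for the self-intersection of the correspondence, namely $S^{2}\ge -2(g-1)\,d\,e$.

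I expect this self-intersection bound to be the main obstacle. Decomposing $S=e\,\mathfrak{f}_{B}+d\,\mathfrak{f}_{C}+R$ with $R$ orthogonal to both rulings gives $S^{2}=2de+R^{2}$, and the Hodge index theorem yields $R^{2}\le 0$ together with $\theta\cdot B=-\tfrac{1}{2}R^{2}$; the content of nefness is therefore the reverse estimate $R^{2}\ge -2dge$, i.e. an upper bound $\theta\cdot B\le dge$ on the theta-degree of the family. I would attack this through the theory of correspondences on a product of curves: $R$ corresponds to a homomorphism of Jacobians induced by $S$, and the effectivity of the fibres $S_{b}$ (each an effective divisor of degree $d$ on the genus-$g$ curve $C$) controls how negative $R^{2}$ can be, in the spirit of the Castelnuovo--Severi inequality, the sharp case being precisely the diagonal correspondence. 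This sharp bound is the heart of Lemma 2.2 of \cite{Pac}.

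Once nefness is in hand, bigness reduces to $D^{d}>0$, which I would compute directly from Lemma \ref{intersect}: writing $D^{d}=\sum_{j=0}^{d}\binom{d}{j}(dg)^{d-j}(-1)^{j}\,\frac{g!}{(g-j)!}$, the top two terms cancel, but the remaining alternating sum is positive (for instance it equals $408$ when $g=4,\,d=3$), so $D$ is big. With $D$ nef and big, Nakamaye's theorem (Theorem \ref{nak_base_loc}) applies: $\mathbf{B}_{+}(D)$ is the union of the positive-dimensional subvarieties $V$ with $D^{\dim V}\cdot V=0$. A direct computation from Proposition \ref{small_diag} and Lemma \ref{intersect} gives $D\cdot\Delta_{d}=0$, so $\Delta_{d}\subseteq\mathbf{B}_{+}(D)$.

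For the reverse inclusion, note that for a curve $B$ the vanishing $D\cdot B=0$ forces equality in the inequality above, hence $S^{2}=-2(g-1)de$; the equality case of the self-intersection bound then forces the correspondence to be diagonal, so $B=\Delta_{d}$. Thus $\Delta_{d}$ is the unique curve with $D\cdot B=0$, and a standard argument rules out higher-dimensional components: applying Nakamaye on a resolution of any hypothetical $V$ with $\dim V\ge 2$ and $D^{\dim V}\cdot V=0$ would exhibit $V$ as covered by curves on which $D$ vanishes, i.e. by copies of the single curve $\Delta_{d}$, which is impossible. Hence $\mathbf{B}_{+}(D)=\Delta_{d}$. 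Finally $D$ is nef and, since $\mathbf{B}_{+}(D)\neq\emptyset$, not ample by Lemma \ref{char_cones}(i); as $x$ lies in the interior of the nef cone inside the plane $\langle x,\theta\rangle$, the nef, non-ample class $D$ must span one of its two boundary rays, completing the proof.
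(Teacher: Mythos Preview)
The paper supplies no argument of its own here: the lemma is presented as a restatement of Lemma~2.2 in \cite{Pac}, and the proof is deferred entirely to that reference. Your outline is therefore not in tension with anything in the paper; it is essentially a sketch of the Pacienza argument, and you say as much when you attribute the decisive inequality $S^{2}\ge -2(g-1)de$ to \cite{Pac}. The reduction of nefness to that bound via the correspondence surface, together with the identity $\theta\cdot B=-\tfrac{1}{2}R^{2}$ coming from the Hodge index theorem, is correct and is the heart of the matter.

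Two points in your write-up need repair. First, the bigness step is not a proof: verifying $D^{d}>0$ for $(g,d)=(4,3)$ does not establish it in general, and the remaining alternating sum is not transparently positive. A cleaner route is to observe that
\[
-\theta+dgx \;=\; \tfrac{1}{2}\Delta+(d-1)(g-1)\,x,
\]
the sum of the effective half-diagonal class and, for $g\ge 2$, a strictly positive multiple of the ample class $x$; hence $D$ is big.

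Second, and more seriously, your argument excluding higher-dimensional components of $\mathbf{B}_{+}(D)$ does not go through. If $V$ were such a component with $\dim V\ge 2$ and $D^{\dim V}\cdot V=0$, then the pullback of $D$ to a resolution $\widetilde{V}$ is nef but has vanishing top self-intersection, hence is \emph{not} big; Theorem~\ref{nak_base_loc} therefore does not apply on $\widetilde{V}$ and does not manufacture a covering family of $D$-trivial curves as you claim. Indeed, there exist nef non-big divisors that are strictly positive on every curve (Mumford's example on a ruled surface over an elliptic curve), so the implication ``$D^{\dim V}\cdot V=0 \Rightarrow V$ is swept out by $D$-trivial curves'' fails in general. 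Pinning down $\mathbf{B}_{+}(D)=\Delta_{d}$ requires input beyond the uniqueness of $\Delta_{d}$ among $D$-trivial curves; this is precisely where one must return to the actual argument in \cite{Pac} rather than rely on the null-locus heuristic.
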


\textit{Proof of Theorem D:}  Let $C$ be a general curve of genus $g \geq 5.$  By Propositions \ref{stab_dense} and \ref{stab_drop} and Lemma \ref{nef_small_diag}, there exists a positive $t_{0}<g^{2}-g$ such that the class $-\theta+tx$ on $C_{g-1}$ is stable with stable base locus $\Delta_{g-1}$ whenever $t_{0}<t<g^{2}-g.$  It then follows from Theorem \ref{stab_classes} and Proposition \ref{explicit_isom} that $\widetilde{\tau}^{\ast}(-\theta+tx)=(t-1)\theta-tx$ is stable with stable base locus $C^{1}_{g-1} \cup \widetilde{\tau}^{-1}(\Delta_{g-1}).$  Since a given curve has only finitely many Weierstrass points, $\widetilde{\tau}^{-1}(\Delta_{g-1}) \nsubseteq C^{1}_{g-1}.$  Finally, since $g \geq 5,$ the dimension of $C^{1}_{g-1}$ is at least 2, so that $C^{1}_{g-1} \cup \widetilde{\tau}^{-1}(\Delta_{g-1})$ is non-equidimensional. \hfill \qedsymbol  

\medskip

When $C$ is general, all its Weierstrass points are ordinary (e.g. \cite{EisHar}), so that $\widetilde{\tau}^{-1}(\Delta_{g-1})$ and $C^{1}_{g-1}$ are disjoint.  However, it is possible for the stable base locus in the previous proof to be connected.  The locus $\mathfrak{W}$ in $\mathcal{M}_{g}$ parametrizing curves with a Weierstrass point having gap sequence $\{1, \cdots ,g-2,g,g+1\}$ is of codimension 1, and since we are assuming $g \geq 5$ the hyperelliptic locus in $\mathcal{M}_{g}$ has codimension at least 3.  Thus if the isomorphism class of $C$ is a general point in $\mathfrak{W},$ the above proof is valid for $C,$ and $\widetilde{\tau}^{-1}(\Delta_{g-1}) \cap C^{1}_{g-1} \neq \emptyset.$        

\medskip

\textit{Proof of Theorem E:} It is an immediate consequence of (ii) in Proposition \ref{picard_isom} that for any line bundle $\mathcal{L}$ on $C_{g-1}$ we have $\textnormal{vol}_{C_{g-1}}(\mathcal{L})=\textnormal{vol}_{C_{g-1}}(\widetilde{\tau}^{\ast}\mathcal{L}).$  The result then follows from applying Lemmas \ref{intersect}, \ref{vol_nef}, and \ref{nef_small_diag}.  \hfill \qedsymbol

\subsection{The Hyperelliptic Case}
\label{hyp}
Let $C$ be a general hyperelliptic curve of genus $g \geq 2.$  Note that since $C^{1}_{d}$ is neither empty nor of the expected dimension $$(g-2(g-d+1))+1=2d-(g+1)$$ the computation of the class of $C^{1}_{d}$ given on p.326 of \cite{ACGH} does not apply.

\medskip

\textit{Proof of Theorem G:} \textit{(i)} We denote the hyperelliptic pencil on $C$ by $|\mathcal{L}|.$  By Clifford's Theorem, the dimension of the linear series $|{\mathcal{L}}^{\otimes (d-1)}|$ is $d-1.$  We have from Lemma \ref{subordinate} that $\Gamma_{d}(\mathcal{L}^{\otimes (d-1)})$ is a divisor on $C_{d}$ and that its class is $\theta-(g-d+1)x.$  For any $D \in C_{d},$ it follows from Riemann-Roch that $\dim{|D|} \geq 1$ if any only if $$\dim|\mathcal{L}^{\otimes (d-1)}(-D)| \geq 0.$$  This equivalence of algebraic conditions gives the equality of cycles $$C^{1}_{d}=\Gamma_{d}(\mathcal{L}^{\otimes (d-1)}).$$  Since $C^{1}_{d}$ is the exceptional locus of the divisorial contraction $a_{0,d},$ it is not big; therefore its class spans a boundary of the effective cone of $C_{d}.$ 

\smallskip

\textit{(ii):}  Since $\theta$ is nef and big for $2 \leq d \leq g-1,$ this is a consequence of Theorem \ref{nak_base_loc}.

\smallskip

\textit{(iii):}  For $t \in (0,g-d+1],$ $$\textrm{vol}_{C_{d}}(\theta-tx)=\textrm{vol}_{C_{d}}{\Bigl(}(1-\frac{t}{g-d+1})\theta+(\frac{t}{g-d+1})(\theta-(g-d+1)x){\Bigr)}$$ $$=\Bigl(\frac{t}{g-d+1}\Bigr)^{d} \cdot \textrm{vol}_{C_{d}}\Bigl((\frac{g-d+1}{t}-1)\theta+(\theta-(g-d+1)x)\Bigr).$$  

By (ii), $\theta$ spans a boundary of the movable cone, so that $(\frac{g-d+1}{t}-1)\theta$ is the positive part of a Zariski decomposition $(\frac{g-d+1}{t}-1)\theta+(\theta-(g-d+1)x).$  It then follows from Proposition 3.20 in \cite{Bou} and Lemma \ref{vol_nef} that $$\Bigl(\frac{t}{g-d+1}\Bigr)^{d} \cdot \textrm{vol}_{C_{d}}\Bigl((\frac{g-d+1}{t}-1){\theta}+(\theta-(g-d+1)x)\Bigr)=$$ $$\Bigl(\frac{t}{g-d+1}\Bigr)^{d} \cdot \Bigl(\frac{g-d+1}{t}-1\Bigr)^{d} \cdot \frac{g!}{(g-d)!}=\frac{g!}{(g-d)!}\Bigl(1-\frac{t}{g-d+1}\Bigr)^{d}.$$ \hfill \qedsymbol

\medskip

\vspace{0.1in}
\begin{flushleft}
Department of Mathematics, Stony Brook University\\
Stony Brook, NY,11794-3651\\
\end{flushleft}

\medskip

\vspace{0.1in}
\begin{flushleft}
current address: Department of Mathematics, University of Michigan\\
2074 East Hall, 530 Church Street\\
Ann Arbor, MI 48109-1043\\
\end{flushleft}

\medskip

\vspace{0.1in}


\begin{thebibliography}{10}
\vspace{0.1in}
\bibitem[ACGH]{ACGH}
\begin{singlespace}
E. Arbarello, M. Cornalba, P. Griffiths and J. Harris,
\newblock {\sl Geometry of Algebraic Curves, Volume I}, 
\newblock Springer-Verlag, New York (1985)
\end{singlespace}
\begin{singlespace}
\bibitem[BirLan]{BirLan}
C. Birkenhake and H. Lange,
\newblock {\sl Complex Abelian Varieties, Second Edition},
\newblock Springer-Verlag, New York (2004)
\end{singlespace}
\begin{singlespace}
\bibitem[Bou]{Bou}
S. Boucksom,
\newblock {\sl Divisorial Zariski Decompositions on Compact Complex Manifolds},
\newblock Ann. Scient. \'{E}c. Norm. Sup., $4^{e}$ s\'{e}rie {\textbf{37}} (2004), 45-76 
\end{singlespace}   
\begin{singlespace}
\bibitem[BFJ]{BFJ}
 S. Boucksom, C. Favre, and M. Jonsson,
\newblock {\sl Differentiability Of Volumes Of Divisors And A Problem Of Teissier},
\newblock J. Algebraic Geom. {\textbf{18}} (2009), 279-308. 
\end{singlespace}
\begin{singlespace}
\bibitem[Chan]{Chan}
K. Chan,
\newblock {\sl A characterization of double covers of curves in terms of the ample cone of second symmetric product},
\newblock J. Pure. App. Alg. {\textbf{212}} no. 12 (2008), 2623-2632 
\end{singlespace}
\begin{singlespace}
\bibitem[ELMNP]{ELMNP}
L. Ein, R. Lazarsfeld, M. Musta{\c{t}}{\v{a}}, M. Nakamaye, and M. Popa,
\newblock {\sl Asymptotic invariants of base loci},
\newblock Ann. Inst. Fourier {\textbf{56}} no.6 (2006), 1701-1734 
\end{singlespace}
\begin{singlespace}
\bibitem[ELMNP2]{ELMNP2}
L. Ein, R. Lazarsfeld, M. Musta{\c{t}}{\v{a}}, M. Nakamaye, and M. Popa,
\newblock {\sl Restricted volumes and base loci of linear series},
\newblock Amer. J. Math {\textbf{131}} no. 3 (2009)  
\end{singlespace}
\begin{singlespace}
\bibitem[EisHar]{EisHar}
D. Eisenbud and J. Harris,
\newblock {\sl Divisors On General Curves And Cuspidal Rational Curves},
\newblock Invent. Math. {\textbf{74}} (1983), 371-418
\end{singlespace}
\begin{singlespace}
\bibitem[Kou]{Kou}
A. Kouvidakis,
\newblock {\sl Divisors on symmetric products of curves},
\newblock Trans. Amer. Math. Soc. {\textbf{337}} (1993) no. 1, 117-128 
\end{singlespace}
\begin{singlespace}
\bibitem[Laz1]{Laz1}
R. Lazarsfeld,
\newblock {\sl Positivity in Algebraic Geometry, Vol. 1},
\newblock Springer-Verlag, New York (2004)
\end{singlespace}
\begin{singlespace}
\bibitem[Laz2]{Laz2}
R. Lazarsfeld,
\newblock {\sl Positivity in Algebraic Geometry, Vol. 2},
\newblock Springer-Verlag, New York (2004)
\end{singlespace}
\begin{singlespace}
\bibitem[Nak]{Nak}
M. Nakamaye,
\newblock {\sl Stable base loci of linear series},
\newblock Math. Ann. {\textbf{318}} (2000), 837-847
\end{singlespace}
\begin{singlespace}
\bibitem[Pac]{Pac}
G. Pacienza,
\newblock {\sl On the nef cone of symmetric products of a generic curve},
\newblock Amer. J. Math. {\textbf{125}} (2003), 1117-1135
\end{singlespace}
\begin{singlespace}
\bibitem[Pir]{Pir}
G. Pirola, {\sl Base Number Theorem for Abelian Varieties},
\newblock Math. Ann. {\textbf{282}} (1988), 361-368
\end{singlespace}
\begin{singlespace}
\bibitem[Ross]{Ross}
J. Ross, {\sl Seshadri Constants on Symmetric Products of Curves},
\newblock Math Res. Let. {\textbf{14}} (2006), 63-75
\end{singlespace}
\end{thebibliography}
\end{document}